\documentclass[reqno,a4paper,12pt]{amsart}

\usepackage{mystyle}
\usepackage{etoolbox}

\usepackage[utf8]{inputenc}
\usepackage[T1]{fontenc}
\usepackage{lmodern}
\usepackage[british]{babel}
\usepackage[nopatch=eqnum]{microtype} %
\usepackage[headings]{fullpage}
\usepackage{parskip}

\usepackage[pdfencoding=auto,pdfusetitle]{hyperref}
\hypersetup{colorlinks=true, linkcolor=blue!30!black, citecolor=green!30!black, urlcolor=red!45!black}

\usepackage{microtype}

\usepackage{amsfonts,amssymb,bbm,stmaryrd}
\usepackage[scr=boondoxo,scrscaled=1.05]{mathalfa}
\usepackage{amsmath}

\usepackage{mathtools}
\usepackage{thmtools}
\hypersetup{hypertexnames=false}
\usepackage[noabbrev,capitalise]{cleveref}
\crefformat{equation}{(#2#1#3)}
\crefformat{enumi}{(#2#1#3)}
\crefformat{enumii}{(#2#1#3)}

\usepackage{centernot,float,subcaption,url,array,booktabs,colortbl}

\usepackage[shortlabels]{enumitem}

\usepackage{tikz}
\usetikzlibrary{matrix,positioning}
\usetikzlibrary{calc}
\usetikzlibrary{arrows}
\tikzset{> =stealth}

\usepackage{quiver}

\theoremstyle{plain}
\newtheorem{theorem}{Theorem}[section]
\newtheorem{lemma}[theorem]{Lemma}
\newtheorem{proposition}[theorem]{Proposition}
\newtheorem{corollary}[theorem]{Corollary}

\theoremstyle{definition}
\newtheorem{definition}[theorem]{Definition}
\theoremstyle{remark}
\newtheorem{remark}[theorem]{Remark}
\newtheorem{example}[theorem]{Example}

\newtheorem{excont}{Example}

\renewcommand{\epsilon}{\varepsilon}
\renewcommand{\phi}{\varphi}

\newcommand{\N}{\mathbb{N}}
\newcommand{\Z}{\mathbb{Z}}
\newcommand{\Q}{\mathbb{Q}}
\newcommand{\R}{\mathbb{R}}

\newcommand{\1}{{1}}

\newcommand{\Srpnsk}{\mathbb{S}}

\newcommand{\Set}{\mathbf{Set}}

\newcommand{\Frm}{\mathbf{Frm}}
\newcommand{\Loc}{\mathbf{Loc}}

\newcommand{\Cat}{\mathbf{Cat}}
\newcommand{\Grpd}{\mathbf{Grpd}}
\newcommand{\AnaCat}{\mathbf{ACat}}
\newcommand{\AnaGrpd}{\mathbf{AGrpd}}
\newcommand{\CAT}{\mathbf{CAT}}

\newcommand{\KHausLoc}{\mathbf{KHausLoc}}

\newcommand{\op}{{^\mathrm{\hspace{0.5pt}op}}}
\newcommand{\id}{\mathrm{id}}

\DeclareMathOperator{\Hom}{Hom}
\DeclareMathOperator{\Sh}{Sh}
\renewcommand{\O}{\mathcal{O}}

\newcommand{\Tmod}{\theory\text{-}\mathrm{Mod}}
\newcommand{\Tpmod}{\theory'\text{-}\mathrm{Mod}}
\newcommand{\Sstruct}{\Sigma\text{-}\mathrm{Struct}}

\newcommand{\slashedrightarrow}{\mathrel{\ooalign{\hss$\vcenter{\hbox{\tikz{\draw (0,4.5pt) -- (0,0) ;}}}\mkern2.75mu$\hss\cr$\to$}}}
\newcommand{\xslashedrightarrow}[1]{\mathrel{\ooalign{\hss$\vcenter{\hbox{\tikz{\draw (0,4.5pt) -- (0,0) ;}}}\mkern2.75mu$\hss\cr$\xrightarrow{{#1}}$}}}

\newcommand{\topos}{\mathcal{E}}

\newcommand{\G}{\mathcal{G}}
\renewcommand{\H}{\mathcal{H}}
\newcommand{\K}{\mathcal{K}}
\newcommand{\C}{\mathfrak{C}}
\newcommand{\D}{\mathcal{D}}
\newcommand{\X}{\mathcal{X}}
\newcommand{\theory}{\mathbb{T}}

\newcommand{\Topos}{{\bf Topos}}

\newcommand{\LocGrpd}{\mathbf{Grpd}(\Loc)}
\newcommand{\LocGrpdAna}{\mathbf{AGrpd}(\Loc)}
\newcommand{\LocCat}{\mathbf{Cat}(\Loc)}
\newcommand{\LocCatAna}{\mathbf{ACat}(\Loc)}

\newcommand{\ReprG}{{\rm G}}
\newcommand{\ReprE}{{\rm E}}
\newcommand{\rmLH}{{\rm LH}}
\newcommand{\rmPS}{{\rm PS}}

\newcommand{\Sub}{{\rm Sub}}
\newcommand{\Id}{\mathrm{Id}}
\newcommand{\Obj}{\mathbb{O}}
\newcommand{\paronto}{%
  \rightharpoondown\mathrel{\mspace{-15mu}}\rightharpoondown
}

\DeclareFontFamily{U}{mathb}{\hyphenchar\font45}
\DeclareFontShape{U}{mathb}{m}{n}{
<-6> mathb5 <6-7> mathb6 <7-8> mathb7
<8-9> mathb8 <9-10> mathb9
<10-12> mathb10 <12-> mathb12
}{}
\DeclareSymbolFont{mathb}{U}{mathb}{m}{n}
\DeclareMathSymbol{\pprec}{\mathrel}{mathb}{"CE}
 
\renewcommand{\le}{\leqslant}

\title{Generic bundles over a localic category}

\author[G. Manuell]{Graham Manuell}
\address{Department of Mathematical Sciences, Stellenbosch University, South Africa 
\newline
National Institute for Theoretical and Computational Sciences, Stellenbosch, South Africa}
\email{graham@manuell.me}

\author[J. L. Wrigley]{Joshua L. Wrigley}
\address{Université Paris Cité, CNRS, IRIF, F-75013, Paris, France}
\email{josh.l.wrigley@gmail.com}

\thanks{
The first author acknowledges some financial support from the Centre for Mathematics of the University of Coimbra (UIDB/00324/2020, funded by the Portuguese Government through FCT/MCTES). The authors are also grateful to Peter F.\ Faul, whose Thuthuka grant (TTK240408212906) from the National Research Foundation of South Africa provided further travel funding for the second author to visit the first.
}

\date{May 2026}

\subjclass[2020]{03G30, 22A22, 06D22, 18F10}
\keywords{generic bundle, geometric logic, dual geometric logic, localic category, localic groupoid, classifying topos, classifying space} 

\begin{document}

\maketitle

\begin{abstract}
    In this paper we construct classifying localic categories and groupoids for various bundles equipped with logical structure.  When these bundles are local homeomorphisms, we recover the localic groupoids that classify geometric theories, demonstrating that these groupoids satisfy a stronger universal property than that of their corresponding classifying toposes.  We also prove a dual result that there exist classifying localic categories and groupoids for proper separated bundles satisfying a dual geometric theory. Thus, localic groupoids classify strictly more kinds of logical theories than toposes.  Our approach provides a concrete construction of the localic categories and the generic bundles involved in terms of generalised frame presentations.  To accommodate our approach, we prove \textit{en passant} a constructive, pointfree version of the Alexandroff--Hausdorff theorem and that internal functors that are fully faithful and effective descent morphisms on objects induce equivalences between the categories of discrete opfibrations over the source and target categories.
\end{abstract}
\setcounter{section}{-1}
\section{Introduction}

In algebraic topology, algebraic structure over a space can often be specified via maps into a particular \emph{classifying space}.  The theory of \emph{classifying toposes} was later developed in analogy with classifying spaces (e.g.\ \cite{hakim} and \cite[\S VIII]{SGL}), and they are now a central object of study in categorical logic --- in particular, there exist classifying toposes for any structure that can be axiomatised in {geometric logic} (for textbook accounts, see \cite{makkaireyes,Elephant,TST}).

Joyal and Tierney showed that every (Grothendieck) topos can be represented by a localic groupoid \cite{joyal1984galois}. In \cite{manuell2023representing}, we provided an explicit description of a localic groupoid representing the classifying topos of a given geometric theory.  As a consequence of the traditional theory of classifying toposes and the work of Moerdijk \cite{moerdijk1988}, this localic groupoid enjoys a universal property amongst \emph{étale-complete} localic groupoids.

Our first contribution is to show that this same groupoid satisfies a universal property amongst \emph{all} localic groupoids, for which we give a direct proof without relying on the theory of classifying toposes.

A key advantage of our approach is that it can easily be modified to give localic groupoids that classify other structures, leading to the second contribution of the present paper: we construct the classifying localic groupoids for proper separated bundles carrying logical structure. 

These latter groupoids might be understood as providing a mirror of topos theory where sets are replaced with compact Hausdorff spaces and geometric logic is replaced with a `dual' geometric logic (a positive infinitary first-order logic admitting infinitary \emph{conjunctions}, but only finite disjunctions, see \cref{sec:logic}). In particular, we obtain a classifying localic groupoid for compact Hausdorff locales, which was independently discovered by Henry and Townsend \cite{henry_townsend_classifying} using very different means.

In this way the bicategory of localic groupoids (and anafunctors) is seen to be very rich, containing not just all (Grothendieck) toposes, but also a kind of dual to toposes, and more besides. We believe that this bicategory is worthy of further study.

\subsection*{Classifying groupoids via forcing extensions}

The genesis of the present paper lies in our efforts to provide explicit descriptions of the localic constructions involved in the Joyal--Tierney paper.  The approach we adopt differs greatly from that of Henry and Townsend, which expresses the existence of an object classifier in the language of \emph{stacks}.  Our construction, by contrast, involves only the techniques of classifying locales and anafunctors. The core idea can also be understood in terms of the theory of \emph{forcing extensions} found in set theory.

As an illustrative example consider the problem of finding a locale $L[\Obj]$ whose points correspond to (isomorphism classes of) sets. If we bounded the size of the sets we wish to classify by some cardinality $\kappa$, then this would not be a difficult problem: we can take a locale with a distinct point for each cardinality below $\kappa$. However, since the number of points of a locale is a set, this is obviously impossible for the proper class of \emph{all} sets.

Nonetheless, for each non-empty set $X \in \Set$, there exists a forcing extension of the chosen model of set theory in which $X$ becomes countable. This means that arbitrary sets $X$ give rise to spans $1 \twoheadleftarrow I \to L[\Obj]$ where $I \twoheadrightarrow 1$ is an effective descent morphism and $L[\Obj]$ is restricted to countable sets. There are many such spans for each set $X$, but by keeping track of isomorphisms of sets it is possible to upgrade this to an equivalence of categories.

Using this intuition, we can build the classifying localic category for \emph{étale bundles}, and consequently for \emph{geometric theories}, by taking a localic category encoding the subquotients of $\N$ (as is done for groupoids in \cite{manuell2023representing}).
In a similar fashion, we will construct the classifying localic category of a \emph{proper separated bundle}, and consequently for \emph{dual geometric theories}, by observing that, for every compact Hausdorff locale $X$,
there is a forcing extension where $X$ is a subquotient of the Cantor space (see \cref{thm:alexandroff_hausdorff}).

\subsection*{Overview}

The paper proceeds as follows.
\begin{enumerate}
    \item We begin with a background section covering the concepts needed to understand the paper.  We cover locale theory, including local homeomorphisms and proper separated morphisms, in addition to internal categories and anafunctors, which are necessary to express the universal property of our classifying category.  We also briefly describe the elements of descent theory and topos theory we use in the paper.
    \item Whereas we give an explicit frame presentation of the generic local homeomorphic bundle, the generic bundle of a proper separated bundle is most naturally expressed in terms of \emph{generalised presentations} of locales, developed in \cite{manuellThesis,manuell2022spectrum}.  These are recalled in \cref{sec:gen_prens}, where we give detailed constructions of the instances that will be used throughout the paper.  We will pay special attention to \emph{dual} presentations, where the object of generators and relations are indexed by compact Hausdorff locales.  In particular, we prove that a compact Hausdorff locale can be presented canonically in terms of itself (\cref{lem:canonical_prens}), just as discrete locales can be. 
    \item In \cref{sec:forcing}, we prove two locale-theoretic forcing results which we will need to prove the universal property of our generic bundles.  The first expresses the well-known fact that, up to a forcing extension, `every set is subcountable'.  For the second result, we prove a constructive, pointfree version of the Alexandroff--Hausdorff theorem (\cref{thm:alexandroff_hausdorff}) that expresses that, up to a forcing extension, `every compact Hausdorff locale is a subquotient of the Cantor space'.
    \item In \cref{sec:syntax}, we describe how first-order logic is interpreted in the bundles over a localic category.  After delineating the theories of geometric and dual geometric logic, we describe the local homeomorphic models and proper separated models.  We show in \cref{sec:base_change} that local homeomorphic models and proper separated models are stable under base change. Thus, we obtain a pseudofunctor that assigns to each localic category its local homeomorphic models (respectively, the proper separated models). Finally, in \cref{subsec:generic_model}, we construct the generic local homeomorphic model of a geometric theory and the generic proper separated model of a dual geometric theory.
    \item Then, in \cref{sec:descent_dopf}, we demonstrate that the pseudofunctors constructed in \cref{sec:base_change} extend to the bicategory of localic categories and anafunctors.  To do so, we prove that internal discrete opfibrations descend along those internal functors that are fully faithful and surjective on objects.
    \item The above ingredients are combined in \cref{sec:univ_prop} to prove that the classifying localic categories and generic bundles constructed in \cref{subsec:generic_model} satisfy a universal property.  We then use the defining property of the core groupoid of an internal category to deduce a corresponding universal property of the classifying localic groupoid.
    \item We conclude with \cref{sec:concl} where we discuss possible extensions to our results and advocate for a theory of `dual toposes'.
\end{enumerate}

\section{Background}

\subsection{Locales}

Pointfree topology studies topological spaces by means of their lattices of open sets. The advantages of this approach lie in the existence of classifying locales and in the fact that pointfree topology has a satisfactory constructive theory. Here we give a very brief introduction. See \cite{PicadoPultr}, \cite[Part C]{Elephant} or \cite{manuell2023constructive} for more details.

\begin{definition}
 A \emph{frame} is a complete lattice satisfying the infinite distributivity law \[a \wedge \bigvee_\alpha b_\alpha = \bigvee_\alpha a \wedge b_\alpha.\]
\end{definition}
Frames are (infinitary) algebraic structures with constants $0$ and $1$, a binary operation $\wedge$, and a join operation $\bigvee$ for each cardinality. Frame homomorphisms are maps that preserve finite meets and arbitrary joins. We denote the category of frames by $\Frm$.

A topological space $X$ can be viewed as a frame via its lattice of open sets $\O X$, and a continuous map $f \colon X \to Y$ yields a frame homomorphism $f^* \colon \O Y \to \O X$ which takes the inverse image of opens.

We define the category $\Loc$ of \emph{locales} as the opposite of the category $\Frm$ of frames and frame homomorphisms.  The direction of locale morphisms is chosen to agree with that of continuous maps of topological spaces.

To avoid confusion about which category we are considering, we will distinguish between a locale $X$ and its frame of opens $\O X$. If $f\colon X \to Y$ is a locale morphism, we write $f^*\colon \O Y \to \O X$ for the corresponding frame homomorphism, in analogy with topological spaces. Moreover, we call the elements of $\O X$ the \emph{opens} of $X$.

A regular subobject of a locale is called a \emph{sublocale}. These correspond to quotient frames and should be understood as the pointfree analogue of subspaces. The category of locales has an image factorisation, meaning that every locale morphism $f \colon Y \to X$ factors as an epimorphism $Y \twoheadrightarrow f(Y)$ followed by a sublocale inclusion $f(Y) \hookrightarrow X$, which we call the \emph{image}.

The category of locales is order-enriched (in fact, enriched over directed-complete partial orders) using the normal pointwise order of the associated frame homomorphisms.

\subsubsection{Reasoning using points}\label{sec:reasoning_with_points}

Recall that the points of a topological space $X$ correspond to the continuous maps $1 \to X$ from the one-point space into $X$.  If $X$ is suitably well-behaved then its points correspond to the frame homomorphisms $\O X \to \O 1$.  

Therefore, we define a (global) point of a locale $X$ to be a morphism $p\colon 1 \to X$, where $1$ is given by the frame $\O 1$ from above, which is also the terminal object in $\Loc$. Just as in topology, we can use the points of a locale to `probe' for information about the locale.  For instance, if we wish to test whether two locale morphisms $f, g \colon X \rightrightarrows Y$ are distinct, it suffices to find a point $x \colon 1 \to X$ such that $fx \neq gx$. Famously, a locale might fail to have enough points: it may be the case that $fx = gx$ for all points $x \colon 1 \to X$, but $f \neq g$ (for instance, $X$ could be the locale defined in \cref{ex:partial_surjections_from_N}). Nonetheless, we \emph{can} still reason in a point-like fashion if we go about it in a more sophisticated way, as explained in \cite{vickers2007locales} and \cite[\S 5.3]{moerdijk1988}.

A \emph{generalised point} of $X$ is any locale morphism $A \to X$, and by the Yoneda lemma, generalised points determine the locale completely. Using the techniques of categorical logic it is possible to reason with generalised points largely as if they were points in the familiar (i.e.\ global) sense.

In fact, using generalised points can be more convenient than using global points: to define a continuous map $f \colon X \to Y$ between topological spaces, after specifying how $f$ acts on each point of $X$ we still need to check that $f$ is actually continuous; however, if we specify how $f$ acts on generalised points, continuity is automatic (as can be seen by considering the point $\id\colon X \to X$).  For an example of reasoning via points in action, see the Appendix of \cite{manuell2023representing}.

\subsubsection{Open and proper morphisms}

There are a number of important classes of locale morphisms (generalising notions from classical topology) that will be used in this paper.
\begin{definition}\label{def:open_and_proper}
    A morphism of locales $f\colon X \to Y$ is \emph{open} if the corresponding frame homomorphism $f^*\colon \O Y \to \O X$ has a left adjoint $f_!\colon \O X \to \O Y$ satisfying the Frobenius reciprocity condition
    \[f_!(f^*(u) \wedge v) = u \wedge f_!(v).\]
    
    A morphism of locales $f\colon X \to Y$ is \emph{proper} if the right adjoint $f_*\colon \O X \to \O Y$ of $f^*$ (which always exists) preserves directed joins and satisfies the Frobenius reciprocity condition
    \[f_*(f^*(u) \vee v) = u \vee f_*(v).\]
\end{definition}

If a sublocale inclusion $S \hookrightarrow X$ is open, we say $S$ is an \emph{open sublocale} of $X$. These are in order-preserving bijection with the elements of $\O X$. Moreover, the open sublocales are closed under arbitrary joins and finite meets. As one would expect, the image of an open sublocale under an open locale morphism is again an open sublocale (and the corresponding map of opens is given by the left adjoint).

If a sublocale inclusion $S \hookrightarrow X$ is proper, we say $S$ is a \emph{closed sublocale} of $X$. These are in order-reversing bijection with the elements of $\O X$. Indeed, open sublocales and closed sublocales are complements in the lattice of sublocales of $X$. Hence, closed sublocales are closed under arbitrary meets and finite joins. Proper morphisms are in particular \emph{closed} in the sense that they map closed sublocales to closed sublocales.

An open epimorphism is called an \emph{open surjection} and a proper epimorphism is called a \emph{proper surjection}.

Open morphisms and proper morphisms (and hence open and closed sublocales) are stable under pullback, as are open and proper surjections. Logically, open and proper morphisms are related to existential and universal quantification of opens.
Observe that if $X$ and $Y$ are sets and $\phi$ is a proposition on $X \times Y$ then the image of $\{(x,y) \in X\times Y \mid \phi(x,y)\}$ under $\pi_2\colon X \times Y \to Y$ is $\{y \in Y \mid \exists x \in X.\ \phi(x,y)\}$. Moreover, taking images is left adjoint to taking preimages. A similar situation arises if $\pi_2$ is an open map between locales and $\phi$ is viewed as an `open proposition'.  This logical interpretation is considered more in \cref{sec:dual}. For a more substantial discussion see \cite[Chapter 1]{manuellThesis}.
\begin{example}
    Recall that a locale $X$ is \emph{compact} if whenever there is a directed subset $\mathcal{U} \subseteq \O X$ for which $\bigvee \mathcal{U} = 1$, then $1 \in \mathcal{U}$. The unique map $! \colon X \to 1$ is proper if and only if $X$ is compact.
\end{example}
If $X$ is compact, then every product projection $\pi_2\colon X \times Y \to Y$ is proper and, viewing the elements of $\O X$ as closeds with the reversed order, we can understand $(\pi_2)_*$ as giving the images of these closeds. This can then be understood as existentially quantifying the closed propositions on $X \times Y$ over $X$, in a manner analogous to existentially quantifying the open propositions considered above.
\begin{definition}
    Since more creative terminology does not exist in the literature, we will be prosaic and say a morphism of locales $f\colon X \to Y$ has \emph{open diagonal} if the induced diagonal map to the pullback $X \to X \times_Y X$ is open. A morphism $f\colon X \to Y$ is said to be \emph{separated} if the diagonal map $X \to X \times_Y X$ is proper.

    If a morphism is both open and has open diagonal, it is said to be an \emph{étale} map or a \emph{local homeomorphism}.
\end{definition}
Local homeomorphisms can be understood as locale maps whose fibres are discrete. In particular, if ${!}\colon X \to 1$ is a local homeomorphism, then $X$ is a discrete locale --- that is, its frame is isomorphic to the powerset of a set.

Proper separated maps can similarly be viewed as locale maps whose fibres are compact Hausdorff locales. In particular, ${!}\colon X \to 1$ is proper and separated if and only if $X$ is compact Hausdorff. (Indeed, compare the usual definition of Hausdorff\-ness in terms of having a closed diagonal.)

\subsection{Internal categories}

The central objects of study in this paper are localic categories and groupoids. These are, respectively, internal categories and internal groupoids in the category of locales.
\begin{definition}\label{df:localic_category}
Let $\C$ be a category with all finite limits.
\begin{enumerate}
    \item An \emph{internal category} $\H$ in $\C$ is a diagram of the form
    \[\begin{tikzcd}
H_1 \times_{H_0} H_1 \ar[shift left = 4]{r}{\pi_{2}} \ar{r}{m} \ar[shift right = 4]{r}{\pi_{1}} & H_1  \ar[shift left = 4]{r}{t} \ar[shift right = 4]{r}{s} & \ar{l}[']{e} H_0 ,
\end{tikzcd}\]
satisfying the equations
\[s \circ e = t \circ e = \id_{H_0},\]
\[s \circ m = s \circ \pi_1, \ \ t \circ m = t \circ \pi_2,\]
\[m \circ (\id_{H_1} \times_{H_0} m) = m \circ (m \times_{H_0} \id_{H_1}), \]
\[m \circ (e \circ s, \id_{H_1}) = \id_{H_1} = m \circ (\id_{H_1}, e \circ t).\]
\item An \emph{internal groupoid} is an internal category with a further map $i \colon H_1 \to H_1$ satisfying the axioms:
    \[s \circ i = t, \ \ t \circ i = s, \]
    \[
    m \circ (\id_{H_1},i) = e \circ s, \ \ m \circ (i , \id_{H_1}) = e \circ t.
    \]
\end{enumerate}
\end{definition}
\begin{remark}
    In \cref{df:localic_category}, the morphisms $s$ and $t$ can be understood as taking the domain (or \emph{source}) and codomain (or \emph{target}) of an arrow, respectively. The map $e$ takes the identity arrow on an object, while the map $m$ takes the composite of two composable arrows. Note that in our convention the arguments of $m$ are given in `diagrammatic order'. For an internal groupoid, the map $i$ takes the inverse of an arrow.
\end{remark}
\begin{example}
    The internal categories of $\Set$ are just small categories.
\end{example}
\begin{example}
    Whenever $X$ is a locale, the diagram
    \[
    \begin{tikzcd}
    X \ar{r}{\id_X} \ar[shift left = 4]{r}{\id_X} \ar[shift right = 4]{r}{\id_X} & X \arrow[loop,  "\id_X "', distance=2em, in=305, out=235] \ar[shift left = 4]{r}{\id_X} \ar[shift right = 4]{r}{\id_X} & X \ar{l}[']{\id_X}
    \end{tikzcd}
    \]
    defines a localic groupoid, which is the  `categorically discrete' groupoid on $X$.
\end{example}
\begin{remark}
    We make the qualification that the above groupoid is discrete in the categorical sense because, given a set-based groupoid $X=(X_1 \rightrightarrows X_0)$, by taking the discrete locales corresponding to $X_1$ and $X_0$, we can also obtain a `topologically discrete' localic groupoid.
\end{remark}

There are also internal notions of functors and a natural transformations.
\begin{definition}
    Given two internal categories $\H$ and $\K$, an \emph{internal functor} $\Phi \colon \H \to \K$ consists of a pair of morphisms --- an action on objects $\Phi_0 \colon H_0 \to K_0$ and an action on arrows $\Phi_1 \colon H_1 \to K_1$ --- such that these commute with the structure of the internal categories, i.e.\ we have a commutative diagram:
   \[
        \begin{tikzcd}
	{H_1 \times_{H_0} H_1} & {K_1 \times_{K_0} K_1} \\
	{H_1} & {K_1} \\
	{H_0} & {K_0} \,.
	\arrow["{\Phi_0}", from=3-1, to=3-2, swap]
	\arrow["{\Phi_1}", from=2-1, to=2-2]
	\arrow["s"', shift right=3, from=2-1, to=3-1]
	\arrow["t", shift left=3, from=2-1, to=3-1]
	\arrow["t'", shift left=3, from=2-2, to=3-2]
	\arrow["s'"', shift right=3, from=2-2, to=3-2]
	\arrow["e'"{description}, from=3-2, to=2-2]
	\arrow["e"{description}, from=3-1, to=2-1]
	\arrow[dashed, from=1-1, to=1-2]
	\arrow["m"', from=1-1, to=2-1]
	\arrow["m'", from=1-2, to=2-2]
\end{tikzcd}
\]
\end{definition}
\begin{example}\label{ex:inclusion_of_identities}
    For every internal category $\H$, we can map the `categorically discrete' category on the object $H_0$ of objects of $\H$ to $\H$ via the internal functor $U \colon H_0 \to \H$, where $U_0 = \id_{H_0}$ and $U_1 = e$.
\end{example}

\begin{definition}\label{df:transformation}
    Given two internal functors $\Phi, \, \Psi \colon \H \rightrightarrows \K$ between internal categories, an \emph{internal transformation} $a \colon \Phi \Rightarrow \Psi$ is a morphism $a \colon H_0 \to K_1$ such that $s \circ a = \Phi_0$, $t \circ a = \Psi_0$, and the square
    \[\begin{tikzcd}
	{H_1} & {K_1 \times_{K_0} K_1} \\
	{K_1 \times_{K_0} K_1} & {K_1}
	\arrow["{(a \circ s, \Psi_1)}", from=1-1, to=1-2]
	\arrow["{(\Phi_1, a \circ t)}"', from=1-1, to=2-1]
	\arrow["m", from=1-2, to=2-2]
	\arrow["m"', from=2-1, to=2-2]
\end{tikzcd}\]
    commutes.  Intuitively, this expresses assigning an arrow $a_x \colon \Phi_0(x)  \to \Psi_0(x)$ in $K_1$ to each $x \in H_0$ such that this assignment is natural.
\end{definition}

We use $\Cat(\C)$ for the 2-category of internal categories, internal functors and internal transformations in $\C$, and $\Grpd(\C)$ for the restriction to internal groupoids.

\subsubsection{Discrete opfibrations}

We can impose some familiar properties on internal functors. See \cref{def:fullyfaithful} for the notions of full faithfulness and internal surjectivity of internal functors.
Here we consider discrete opfibrations.
\begin{definition}
    An internal functor $\Phi\colon \H \to \K$ is a \emph{discrete opfibration}
    if the square
    \[\begin{tikzcd}
	{H_1} & {H_0} \\
	{K_1} & {K_0}
	\arrow["s", from=1-1, to=1-2]
	\arrow["{\Phi_1}"', from=1-1, to=2-1]
	\arrow["{\Phi_0}", from=1-2, to=2-2]
	\arrow["s"', from=2-1, to=2-2]
    \end{tikzcd}\]
    is a pullback square.
\end{definition}

Discrete opfibrations into a $\Set$-category $\H$ correspond to functors from $\H$ into $\Set$. Analogously, we can think of internal discrete opfibrations as being a way to formalise the idea of a functor from an internal category to the category it lives in.

The word `discrete' in the term `discrete opfibration' refers the categorical discreteness of the fibres of $\Phi$. When working in $\Loc$ it is also possible to impose that $\Phi_0$ is a local homeomorphism, which is a topological discreteness condition on the fibres. See \cref{sec:toposes_as_spaces} for a different perspective on the conjunction of these two discreteness conditions.

We note that discrete opfibrations are stable under pullback in the 2-category of internal categories.

\subsubsection{The core groupoid}

Just as with ordinary categories, we can construct the \emph{core} $\H_{\cong}$ of an internal category $\H$, i.e.\ the internal groupoid of invertible morphisms.  Explicitly, $(H_{\cong})_0 = H_0$, while $(H_{\cong})_1$ is the regular subobject of $H_1 \times H_1$ defined by the equations
\[
t\circ \pi_1 = s\circ \pi_2, \, t\circ \pi_2 = s \circ \pi_1, \, m =  e\circ s \circ \pi_1, \, m \circ (\pi_2,\pi_1) = e \circ t \circ \pi_1.
\]
Intuitively, a morphism of $\H_{\cong}$ is a pair, where the first component is a morphism in $\H$ and the second component is its inverse. The structure maps of $\H_{\cong}$ are defined in the obvious way.

This defines the right adjoint to the inclusion of the 1-category of internal groupoids into the 1-category of internal categories. This 1-adjunction can be upgraded to a 2-adjunction by restricting to the sub-2-category $\Cat(\C)_{\cong}$ of $\Cat(\C)$ consisting of internal categories, internal functors and \emph{invertible} internal transformations --- that is, there is a natural isomorphism of categories $\Cat(\C)_{\cong}(\G,\H) \cong \Grpd(\C)(\G,\H_{\cong})$ for internal groupoids $\G$ and internal categories $\H$.

\subsection{Descent theory}

The concept of \emph{descent} will play an important role in this paper.  Descent theory was introduced by Grothendieck as a generalisation of the sheaf condition \cite{grothendieck_descent,grothendieck_descent_2}.  For an elementary introduction to descent theory motivated by topology, the reader is directed to \cite{facets_of_descent}.
From our perspective, descent can be understood in terms of discrete opfibrations.

Let $f\colon X \to Y$ be a morphism in a finitely complete category. Let $E \rightrightarrows X$ denote the kernel pair of $f$ (i.e.\ the pullback of $f$ along itself). This equips $X$ with the structure of an internal groupoid (in fact, an internal equivalence relation). Viewing $Y$ as a categorically discrete internal category, $f$ becomes a functor in a natural way.

\begin{definition}
    A morphism $f\colon X \to Y$ in a finitely complete category is said to be an \emph{effective descent morphism} if pullback along $f$ induces an equivalence of categories between the discrete opfibrations over $Y$ (viewed as a discrete category) and the discrete opfibrations over $X$ viewed as the kernel equivalence relation of $f$.
\end{definition}

That is to say, $f\colon X \to Y$ is an effective descent morphism if $Y$-indexed families are the same as $X$-indexed families that respect the equivalence relation induced by $f$. This is clearly a kind of surjectivity condition on $f$.

\begin{remark}
    An internal discrete opfibration $\K \to \H$ is the same data as an $H_1$-action on a bundle $q \colon K_0 \to H_0$ (see \cref{def:sheaf_over_localic_cat}).  Such an action can equivalently be repackaged as a \emph{descent datum}: a morphism $\theta \colon s^* K_0 \to t^* K_0$ satisfying $e^* \theta = \id_{K_0}$ and $m^* \theta = \pi_2^* \theta \circ \pi_1^* \theta$ (where $s^*$ denotes the pullback along the source map $s \colon H_1 \to H_0$, and similarly for other maps).  Given an $H_1$-action $\beta \colon K_0 \times_{H_0} H_1 \to K_0$, the corresponding descent datum $\theta_\beta \colon s^* K_0 \to t^* K_0$ is, in point-set notation, simply the map $(x,g) \mapsto (\beta(x,g),g)$.  For a detailed account of translating between discrete opfibrations/actions and descent data, see \cite[Appendix A]{manuell2023representing}.
\end{remark}

Effective descent morphisms satisfy a number of desirable properties: they are regular epimorphisms and they are preserved under pullback and composition (for further discussion, see \cite[\S B1.5]{Elephant}).
\begin{example}
Both open and proper surjections are of effective descent in the category of locales (see \cite{joyal1984galois,vermeulen1994proper}).
\end{example}

If $f\colon X \twoheadrightarrow Y$ is an effective descent morphism, we say that a pullback-stable class of morphisms \emph{descends} along $f$ if whenever the pullback of a morphism $p\colon Z \to Y$ along $f$ lies in the class, so does $p$ itself. This gives a restriction of the equivalence between the discrete opfibrations over $X$ and $Y$ to a certain subclass of discrete opfibrations and can be useful for proving such properties about maps into $Y$.

\subsection{Anafunctors}
Recall that the axiom of choice is equivalent to the theorem that every fully faithful functor which is essentially surjective on objects is part of an equivalence of categories. When working constructively or internal to a category that does not satisfy the axiom of choice (such as $\Loc$) it is often better to force this theorem to hold anyway by changing our morphisms between (internal) categories.
The idea is to \emph{localise} with respect the desired equivalences.

The \emph{localisation} of a category $\C$, or indeed a bicategory, at a class of morphisms $\Sigma \subseteq \C$ is a pseudofunctor $\C \to \C[\Sigma^{-1}]$ that is universal amongst all pseudofunctors that send the morphisms of $\Sigma$ to isomorphisms/equivalences. In other words, the localisation universally inverts the morphisms of $\Sigma$. In general localisations can be quite complicated, but they have a simple description in terms of spans when $\Sigma$ defines a \emph{calculus of fractions} (see \cite{gabriel_zisman,pronk1996bicategories,roberts2012anafunctors} for details).
\begin{definition}\label{def:fullyfaithful} Let $\H$ and $\K$ be internal categories.
\begin{enumerate}
    \item An internal functor $\Phi\colon \H\to \K$ is \emph{fully faithful} if the commutative square
  \[\begin{tikzcd}
	{H_1} & {K_1} \\
	{H_0 \times H_0} & {K_0 \times K_0}
	\arrow["{(s,t)}"', from=1-1, to=2-1]
	\arrow["{\Phi_1}", from=1-1, to=1-2]
	\arrow["{(s,t)}", from=1-2, to=2-2]
	\arrow["{\Phi_0 \times \Phi_0}"', from=2-1, to=2-2]
	\arrow["\lrcorner"{anchor=center, pos=0.125}, draw=none, from=1-1, to=2-2]
  \end{tikzcd}\]
   is a pullback.
  
    \item We say an internal functor $\Phi\colon \H \to \K$ is \emph{surjective on objects} if $\Phi_0 \colon H_0 \to K_0$ is an effective descent morphism.

    \item We call an internal functor that is both fully faithful and surjective on objects a \emph{surjective weak equivalence}.
\end{enumerate}
\end{definition}
\begin{remark}
    Note that the properties of being surjective on objects and fully faithful are both stable under taking pullbacks of internal functors; the former because effective descent morphisms are stable under pullback, and the latter because pullbacks commute with pullbacks.
\end{remark}
\begin{proposition}[{\cite[Theorem 7.1]{roberts2012anafunctors}, cf.\ also \cite[\S 7]{moerdijk1988}}]
    The class of fully faithful and surjective-on-objects functors form a right calculus of fractions on $\Cat(\C)$.
\end{proposition}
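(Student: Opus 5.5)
We check the axioms of a right calculus of fractions, in the bicategorical form of Pronk, for the class $W$ of surjective weak equivalences in the 2-category $\Cat(\C)$. These are: (i) $W$ contains all equivalences and is closed under composition; (ii) every cospan $\H \xrightarrow{\Phi} \K \xleftarrow{w} \K'$ with $w \in W$ completes to a square with a leg in $W$ opposite $w$; (iii) 2-cells coequalised by a morphism of $W$ can be lifted back along a morphism of $W$; (iv) $W$ is closed under invertible 2-cells.

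\emph{Composition and equivalences.} For composition, note that effective descent morphisms compose. Fully faithful functors also compose: pasting two pullback squares of the form in \cref{def:fullyfaithful} gives a pullback. Identities are clearly in $W$. For general internal equivalences, if $\Phi$ has pseudo-inverse $\Psi$, then the data of the equivalence express $\Phi_0$ as a split epimorphism up to a representable isomorphism. From this one sees that $\Phi_0$ is a retract-type morphism and hence of effective descent. Full faithfulness follows by a standard diagram chase.

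\emph{Square completion (ii).} This is the key step, and the remark preceding the statement already gives it. Given $\Phi\colon\H\to\K$ and $w\colon\K'\to\K$ in $W$, we use the \emph{strict} pullback of internal categories, computed componentwise in $\C$: objects $H_0\times_{K_0}K'_0$ and arrows $H_1\times_{K_1}K'_1$. Since $w$ is fully faithful and surjective on objects and both properties are pullback-stable, the projection to $\H$ lies in $W$, and the square commutes strictly. The only check needed is that the objects part of the pullback functor really is the pullback of $w_0$ along $\Phi_0$, which holds because limits in $\Cat(\C)$ are computed levelwise. Full faithfulness then follows by pasting pullbacks.

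\emph{2-cells (iii) and (iv).} For (iv), suppose $\Phi\cong\Psi$ via an internal transformation $a$ with $\Phi\in W$. Then $\Psi_0$ factors through $H_0\to (K_{\cong})_1 \xrightarrow{t} K_0$. To see that $\Psi_0$ is still effective descent, we compare it with $\Phi_0$ through the isomorphism of the spans $(\Phi_0,a)$; full faithfulness transfers by conjugating with $a$.

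For (iii), suppose $w\in W$ and $\alpha,\beta\colon \Phi\Rightarrow\Psi$ satisfy $w\alpha=w\beta$. Because $w$ is fully faithful, the map $w_1$ is a monomorphism relative to fixed endpoints, being a pullback of $(s,t)$-data. Hence $\alpha=\beta$ already, and we may take the identity as the required $W$-morphism. Similarly, a 2-cell $w\Phi\Rightarrow w\Psi$ lifts uniquely to a 2-cell $\Phi\Rightarrow\Psi$ using the pullback property of full faithfulness. This gives the "faithfulness"-type conditions with the trivial choice.

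\emph{Main obstacle.} We expect the main obstacle to be (iv) together with the inclusion of equivalences, since effective descent morphisms are not obviously closed under composition with isomorphisms "up to a 2-cell". Our plan is to show that $\Psi_0$ is the composite of the effective descent morphism $H_0\times_{K_0}(K_{\cong})_1\to K_0$ (via $t$) with a section, and to invoke the standard fact that if $g\circ f$ is effective descent and the setting is suitably regular, then $g$ is effective descent. Alternatively, we can simply cite \cite[Theorem 7.1]{roberts2012anafunctors}, where exactly this bookkeeping is carried out.
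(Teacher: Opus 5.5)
The paper gives no argument of its own here beyond the citation, so the question is whether your direct check of Pronk's axioms goes through. Closure under composition, step (ii) with strict levelwise pullbacks, and step (iii) are all fine; in (iii), whiskering by a fully faithful functor is bijective on 2-cells, so the identity serves as the required morphism of $W$.

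The two steps you flag as the main obstacle, however, are not bookkeeping: for the class in the statement they are false. Take $\C=\Set$, where the effective descent morphisms are the surjections, and let $\K$ be the codiscrete groupoid on $\{a,b\}$.
\begin{itemize}
\item The functor $1\to\K$ picking out $a$ is an internal equivalence. Its object map is not surjective, so it is not in $W$.
\item The endofunctor of $\K$ sending both objects to $a$ is isomorphic to $\id_\K\in W$. It is not surjective on objects, so $W$ is not closed under invertible 2-cells.
\end{itemize}
What the equivalence data really give is a section $k\mapsto(\Psi_0k,\epsilon_k)$ of $t\pi_2\colon H_0\times_{K_0}(K_{\cong})_1\to K_0$. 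That is essential surjectivity, not a splitting of $\Phi_0$.

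Your repair of (iv) also uses cancellation in the wrong direction. You write $\Psi_0=t\pi_2\circ(\id,a)$, with $t\pi_2$ of effective descent and $(\id,a)$ merely a section. This says nothing about $\Psi_0$: cancellation states that if $g\circ f$ is of effective descent then so is $g$, not the converse.

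Consequently no verification of Pronk's axioms, as you list them, can succeed for this class, and the argument has to be restructured. There are two options.
\begin{itemize}
\item Pass to the saturated class of fully faithful functors for which $t\pi_2$ is of effective descent, i.e.\ the essentially surjective functors mentioned in the remark following \cref{df:anacat}. Your arguments for (i) and (iv) then work. But (ii) can no longer use strict pullbacks: pulling back $a\colon 1\to\K$ along $b\colon 1\to\K$ gives the empty category. You would need iso-comma objects instead, and closure under composition then needs the cancellation law.
\item Keep the surjective-on-objects class and verify only the weakened axioms that actually hold for it: identities rather than all equivalences, and no closure under invertible 2-cells. You then need a separate comparison showing that localising at it agrees with localising at the saturated class. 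This is precisely what the paper outsources to Roberts.
\end{itemize}
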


Therefore, by \cite{pronk1996bicategories} we can localise $\Cat(\C)$ at the class of surjective weak equivalences to obtain a bicategory whose morphisms are spans of functors where the left leg of the span is a surjective weak equivalence (see also the construction in \cite[\S 5]{roberts2012anafunctors}).

\begin{definition}\label{df:anacat}
    We denote the bicategory obtained by localising of the bicategory $\Cat(\C)$ at the class of surjective weak equivalences by $\AnaCat(\C)$.
    \begin{enumerate}
        \item The objects of $\AnaCat(\C)$ are internal categories.
        \item The arrows of $\AnaCat(\C)$ are called \emph{anafunctors}. An anafunctor from $\H$ to $\K$ is a span of internal functors $\H \xleftarrow{\Xi} \widetilde{\H} \xrightarrow{\Phi} \K$ where $\Xi$ is fully faithful and surjective on objects. We write $(\Xi,\Phi) \colon \H \slashedrightarrow \K$ to emphasise that $(\Xi, \Phi)$ is an internal anafunctor instead of an ordinary internal functor.
        \item Let $(\Xi_1,\Phi_1) \colon \H \slashedrightarrow \K$ and $(\Xi_2,\Phi_2) \colon \H \slashedrightarrow \K$ be a pair of anafunctors, and let $\Xi'_1$ and $\Xi'_2$ denote the respective pullback functors in the square
        \[
        \begin{tikzcd}
            \widetilde{\H}_1 \times_\H \widetilde{\H}_2 \ar{d}[']{\Xi'_2} \ar{r}{\Xi_1'} & \widetilde{\H}_2 \ar{d}{\Xi_2} \\
            \widetilde{\H}_1 \ar{r}{\Xi_1} & \H .
        \end{tikzcd}
        \]
        Following \cite[\S 9]{tommasini}, a 2-cell $(\Xi_1, \Phi_1) \Rightarrow (\Xi_2,\Phi_2)$ is defined as an equivalence class of pairs $(\Sigma,\tau)$ consisting of a surjective weak equivalence $\Sigma \colon \widetilde{\H}_3 \to \widetilde{\H}_1 \times_\H \widetilde{\H}_2$ and an internal transformation $\tau \colon \Phi_1 \circ \Xi'_2 \circ \Sigma \Rightarrow \Phi_2 \circ \Xi'_1 \circ \Sigma$.  Two such pairs $(\Sigma,\tau), (\Sigma',\tau')$ are equivalent if there exists a pair of surjective weak equivalences $\Omega \colon \widetilde{\H}_4 \to \widetilde{\H}_3$ and $\Omega' \colon \widetilde{\H}_4 \to \widetilde{\H}'_3$ and an internal natural isomorphism $\sigma \colon \Sigma' \circ \Omega' \Rightarrow \Sigma \circ \Omega$ such that
        \[
        \tau \Omega \circ \Phi_1 \Xi_2' \sigma = \Phi_2 \Xi'_1 \sigma \circ  \tau' \Omega'.
        \]     
    \end{enumerate}
    See \cite{pronk1996bicategories} and \cite{tommasini} for more details about the bicategorical structure.
    We denote by $\AnaGrpd(\C)$ the restriction of $\AnaCat(\C)$ to internal groupoids.
\end{definition}

\begin{remark}
 For simplicity, we have localised at fully faithful and surjective-on-objects functors. It would perhaps have been more intuitive to localise at the fully faithful and \emph{essentially} surjective functors, but by \cite[Proposition 8.1]{roberts2012anafunctors} the two constructions coincide.
\end{remark}

\begin{example}
    Every functor $\Phi\colon \H \to \K$ gives an anafunctor $(\id_\H, \Phi)\colon \H \slashedrightarrow \K$.
\end{example}
\begin{remark}\label{rem:ff_induced_domain}
  For an anafunctor $(\Xi,\Phi) \colon \H \slashedrightarrow \K$,
  the central category $\widetilde{\H}$ and the functor $\Xi$ are (essentially) determined by the underlying map of objects $\Xi_0\colon \widetilde{H}_0 \to H_0$ since, by the full faithfulness of $\Xi$, $\widetilde{H}_1$ and $\Xi_1$ can be constructed as the pullback of $\Xi_0 \times \Xi_0$ along $(s,t)\colon H_1 \to H_0 \times H_0$, while the composition of $\widetilde{\H}$ is also given by pulling back the composition of $\H$. Moreover, it can be shown that, if $\H$ is a groupoid, then $\widetilde{\H}$ must be a groupoid too, as we would expect for a category admitting a fully faithful functor into a groupoid.
\end{remark}

In our situation, the 2-cells in the bicategory of fractions $\AnaCat(\C)$ have a simpler description (see \cite{roberts2021elementary}).
\begin{proposition}\label{prop:2-cells_are_simplified}
    Let $(\Xi_1, \Phi_1)$ and $(\Xi_2,\Phi_2)$ be anafunctors from $\H$ to $\K$. Every 2-cell in $\AnaCat(\C)$ from $(\Xi_1, \Phi_1)$ to $(\Xi_2,\Phi_2)$ has a unique representative of the form $(\id_{\widetilde{\H}_1 \times_\H \widetilde{\H}_2},\tau)$. Thus, we may identify such 2-cells with internal transformations $\tau \colon \Phi_1 \circ \Xi'_2 \Rightarrow \Phi_2 \circ \Xi'_1$.
\end{proposition}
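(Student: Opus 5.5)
We need to show two things: every 2-cell $[(\Sigma,\tau)]$ from $(\Xi_1,\Phi_1)$ to $(\Xi_2,\Phi_2)$ is equivalent to a pair of the form $(\id,\tau')$, and two pairs $(\id,\tau')$ and $(\id,\tau'')$ are equivalent only when $\tau'=\tau''$. Throughout, write $\mathcal{P} = \widetilde{\H}_1 \times_\H \widetilde{\H}_2$. The key input is a descent property of internal transformations along a surjective weak equivalence $\Sigma\colon \widetilde{\H}_3 \to \mathcal{P}$.

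\textbf{Descent lemma.} Precomposition with $\Sigma$ gives a map from internal transformations $\alpha \Rightarrow \beta$ between functors $\mathcal{P} \to \K$ to internal transformations $\alpha\Sigma \Rightarrow \beta\Sigma$. I claim this map is a bijection.

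A transformation $\alpha\Sigma \Rightarrow \beta\Sigma$ is a morphism $a\colon \widetilde{H}_{3,0} \to K_1$ satisfying the naturality square. Since $\Sigma$ is fully faithful, $\widetilde{H}_{3,1}$ is the pullback of $\Sigma_0\times\Sigma_0$ along $(s,t)$, so it contains the kernel pair of $\Sigma_0$. Concretely, pairs $(x,y)$ with $\Sigma_0 x = \Sigma_0 y$ correspond to arrows $x \to y$ lying over identities. Applying naturality to these arrows shows $a \circ \pi_1 = a \circ \pi_2$ on the kernel pair of $\Sigma_0$.

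Since $\Sigma_0$ is an effective descent morphism, it is a regular epimorphism and hence the coequaliser of its kernel pair. So $a$ factors uniquely as $a' \circ \Sigma_0$. The conditions $s a' = \alpha_0$, $t a' = \beta_0$ and the naturality square for $a'$ then follow from those for $a$. For naturality I would use that $\Sigma_1$ is an epimorphism: it is a pullback of $\Sigma_0\times\Sigma_0$, which is a product of effective descent morphisms and hence a regular epimorphism, and effective descent morphisms are pullback-stable. Injectivity of the map is immediate, because $\Sigma_0$ is an epimorphism.

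\textbf{Existence of the representative.} Given $(\Sigma,\tau)$, the lemma supplies a unique $\tau'\colon \Phi_1\Xi_2' \Rightarrow \Phi_2\Xi_1'$ with $\tau'\Sigma = \tau$. To see that $(\Sigma,\tau)$ is equivalent to $(\id,\tau')$, take $\Omega = \id_{\widetilde{\H}_3}$, $\Omega' = \Sigma$, and $\sigma$ the identity transformation $\Sigma \Rightarrow \Sigma$. The required equation then reduces to $\tau = \tau'\Sigma$, which holds by construction.

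\textbf{Uniqueness.} Suppose $(\id,\tau')$ and $(\id,\tau'')$ are equivalent via $\Omega,\Omega'$ and $\sigma\colon \Omega' \Rightarrow \Omega$, where $\Omega,\Omega'\colon \widetilde{\H}_4 \to \mathcal{P}$ are both surjective weak equivalences. The defining equation says $\tau'\Omega \circ \Phi_1\Xi_2'\sigma = \Phi_2\Xi_1'\sigma \circ \tau''\Omega'$.

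By naturality of $\tau''$ with respect to $\sigma$, the right-hand side equals $\tau''\Omega \circ \Phi_1\Xi_2'\sigma$. The components of $\sigma$ are invertible, so we may cancel $\Phi_1\Xi_2'\sigma$ and obtain $\tau'\Omega = \tau''\Omega$. The descent lemma then gives $\tau' = \tau''$.

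\textbf{Main obstacle.} The step needing most care is the descent lemma. The factorisation of $a$ through $\Sigma_0$ is straightforward. Verifying naturality of $a'$ requires checking that $\Sigma_1$ is an epimorphism, which I do by the pullback-stability argument described in the lemma, and that the naturality square for $a$ transfers to one for $a'$ along it.
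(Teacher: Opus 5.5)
Your proposal is correct and follows essentially the same route as the paper. You factor the transformation through the regular epimorphism $\Sigma_0$ after showing, via full faithfulness and naturality, that it equalises the kernel pair; you transfer naturality along the epimorphism $\Sigma_1$; you realise the equivalence $(\Sigma,\tau)\sim(\id,\tau')$ with the trivial witness built from $\Sigma$, $\Id_{\widetilde{\H}_3}$ and $\id_\Sigma$; and you get uniqueness by interchange, cancelling the invertible $\sigma$ and using that $\Omega_0$ is epi.

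One phrasing point to tighten: $\Sigma_1$ is epi because $\Sigma_0\times\Sigma_0$ is itself of effective descent, being a composite of pullbacks of $\Sigma_0$, so its pullback $\Sigma_1$ is too. Knowing only that $\Sigma_0\times\Sigma_0$ is a regular epimorphism would not suffice, since regular epimorphisms need not be pullback-stable.
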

\begin{proof}
 Let $(\Sigma,\tau')$ be a representative of a 2-cell $(\Xi_1, \Phi_1) \Rightarrow (\Xi_2,\Phi_2)$. We claim that the underlying map of the transformation $\tau' \colon (\widetilde{H}_3)_0 \to K_1$ equalizes the kernel pair of $\Sigma_0 \colon (\widetilde{H}_3)_0 \to (\widetilde{H}_1)_0 \times_{H_0} (\widetilde{H}_2)_0$. Then since $\Sigma_0$ is, in particular, a regular epimorphism, $\tau'$ factors through $\Sigma_0$ to give a map $\tau\colon (\widetilde{H}_1)_0 \times_{H_0} (\widetilde{H}_2)_0 \to K_1$.

 We can see why $\tau'$ equalizes the kernel pair of $\Sigma_0$ by reasoning using points. Consider a pair of points $y,y' \in (\widetilde{H}_3)_0$ in the same fibre of $\Sigma_0$. Since $\Sigma$ is fully faithful, there is a (unique) morphism $y \to y'$ in $\widetilde{\H}_3$ that $\Sigma_0$ sends to the identity $\Sigma_0(y) = \Sigma_0(y')$. Then the naturality condition expressed in \cref{df:transformation} ensures that there is a commutative square
 \[
  \begin{tikzcd}
    (\Phi_1)_0 (\Xi'_2)_0 \Sigma_0(y) \ar[equals]{r} \ar{d}[']{\tau'(y)} & (\Phi_1)_0 (\Xi'_2)_0 \Sigma_0(y') \ar{d}{\tau'({y'})} \\
    (\Phi_2)_0 (\Xi'_1)_0 \Sigma_0(y) \ar[equals]{r} & (\Phi_2)_0 (\Xi'_1)_0 \Sigma_0(y'),
  \end{tikzcd}
 \]
 internal to $\K$, from which we conclude that $\tau'(y) = \tau'(y')$, as desired.

 The map $\tau\colon (\widetilde{H}_1)_0 \times_{H_0} (\widetilde{H}_2)_0 \to K_1$ defines an internal transformation $\tau\colon \Phi_1 \circ \Xi'_2 \Rightarrow \Phi_2 \circ \Xi'_1$. To see that the necessary naturality condition is satisfied, note that composing the desired commutative square from \cref{df:transformation} for $\tau$ with $\Sigma_1$ yields the corresponding square for $\tau'$, which commutes by assumption, and then use that $\Sigma_1$ is an epimorphism (since $\Sigma_0$ is a pullback-stable regular epimorphism and $\Sigma$ is fully faithful).

 We conclude that $(\id_{\widetilde{\H}_1 \times_\H \widetilde{\H}_2},\tau)$ also represents a 2-cell $(\Xi_1, \Phi_1) \Rightarrow (\Xi_2,\Phi_2)$. In fact, it lies in the same equivalence class as $(\Sigma, \tau')$, as is witnessed by the triple $(\Sigma, \Id_{\widetilde{\H}_3}, \id_\Sigma)$.

 We now show that such a representative $\tau$ is unique. Let $(\id_{\widetilde{\H}_1 \times_\H \widetilde{\H}_2},\rho)$ be another representative of the 2-cell under consideration. This means that there are weak equivalences $\Omega, \Omega'\colon \widetilde{\H}_4 \to \widetilde{\H}_1 \times_\H \widetilde{\H}_2$ and an internal natural isomorphism $\sigma\colon \Omega \to \Omega'$ such that $\tau \Omega \circ \Phi_1 \Xi_2' \sigma = \Phi_2 \Xi'_1 \sigma \circ  \rho \Omega'$. By the interchange law, we have $\tau \Omega \circ \Phi_1 \Xi_2' \sigma = \rho \Omega \circ \Phi_1 \Xi'_2 \sigma$ and so cancelling the isomorphism $\sigma$, we arrive at $\tau \Omega = \rho \Omega$. Finally, since $\Omega_0$ is an epimorphism we conclude that $\tau = \rho$, completing the proof.
\end{proof}
\begin{corollary}\label{prop:ff_on_2cells}
    The localisation pseudofunctor $\Cat(\C) \to \AnaCat(\C)$ that sends a functor $\Phi \colon \H \to \K$ to $(\id_\H,\Phi)$ is full and faithful on 2-cells.
\end{corollary}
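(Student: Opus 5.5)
We read this off \cref{prop:2-cells_are_simplified} applied to anafunctors of the form $(\id_\H,\Phi)$ and $(\id_\H,\Psi)$, where $\Phi,\Psi\colon \H \rightrightarrows \K$ are internal functors. The claim to establish is that the map sending an internal transformation $\alpha\colon\Phi\Rightarrow\Psi$ to the 2-cell represented by $(\id,\alpha)$ is a bijection onto the 2-cells $(\id_\H,\Phi)\Rightarrow(\id_\H,\Psi)$ in $\AnaCat(\C)$.

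First we identify the relevant pullback. Since $\Xi_1=\Xi_2=\id_\H$, the pullback $\widetilde{\H}_1\times_\H\widetilde{\H}_2$ is $\H\times_\H\H$, and the projections $\Xi'_1,\Xi'_2$ are isomorphisms. Canonically this pullback is $\H$ with both projections equal to $\id_\H$ (up to the canonical choice of pullback). Hence the internal transformations $\Phi\circ\Xi'_2\Rightarrow\Psi\circ\Xi'_1$ are exactly the internal transformations $\Phi\Rightarrow\Psi$. Even if a non-identity choice of pullback is used, $\Xi'_1=\Xi'_2$ is a common isomorphism, and precomposing with it gives a bijection on transformations.

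Next we check that the localisation pseudofunctor, as constructed in \cite{pronk1996bicategories}, acts on 2-cells by sending $\alpha$ to the class of $(\id,\alpha)$, suitably transported along this canonical identification. By the uniqueness clause of \cref{prop:2-cells_are_simplified}, every 2-cell $(\id_\H,\Phi)\Rightarrow(\id_\H,\Psi)$ has exactly one representative of the form $(\id,\tau)$. Combining these, the assignment $\alpha\mapsto[(\id,\alpha)]$ is both surjective (fullness) and injective (faithfulness).

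The only real obstacle is bookkeeping. We must confirm that the pseudofunctor's action on 2-cells in Pronk's construction really is $\alpha\mapsto[(\id,\alpha)]$, rather than something differing by the coherence isomorphisms of the chosen pullback. Any such discrepancy is composition with a fixed invertible identification, which preserves bijectivity, so the conclusion is unaffected.
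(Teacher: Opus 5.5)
Your proposal is correct and matches the paper's approach: the paper gives no separate proof and treats the statement as an immediate consequence of \cref{prop:2-cells_are_simplified}. Specialising to $(\id_\H,\Phi)$ and $(\id_\H,\Psi)$ identifies the pullback $\H\times_\H\H$ with $\H$, so 2-cells correspond bijectively to internal transformations $\Phi\Rightarrow\Psi$; your extra bookkeeping about the choice of pullback and the action of Pronk's pseudofunctor on 2-cells just makes explicit what the paper leaves implicit.
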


\subsubsection{Localising respects the core}

Recall that for an internal category $\H$, we can form its core groupoid $\H_{\cong}$. We briefly argue that localising respects taking the core.
\begin{lemma}
    Let $\G$ and $\H$ be, respectively, an internal groupoid and an internal category.  Let $\AnaCat(\C)(\G,\H)_{\cong}$ denote the subcategory of $\AnaCat(\G,\H)$ consisting of only the invertible internal transformations.  Then there is an equivalence $\AnaCat(\C)(\G,\H)_{\cong} \simeq \AnaGrpd(\C)(\G,\H_{\cong})$ which is pseudonatural in $\G$ and $\H$.
\end{lemma}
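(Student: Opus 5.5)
The plan is to build the equivalence directly from the strict 2-adjunction $\Cat(\C)_{\cong}(\G,\H) \cong \Grpd(\C)(\G,\H_{\cong})$, applied to the right-hand legs of anafunctors, after first checking that the apexes of anafunctors out of a groupoid are groupoids. Take an anafunctor $(\Xi,\Phi)\colon \G \slashedrightarrow \H$ with $\Xi\colon \widetilde{\G} \to \G$ fully faithful and surjective on objects. By \cref{rem:ff_induced_domain}, $\widetilde{\G}$ is then a groupoid. So $\Phi\colon \widetilde{\G} \to \H$ corresponds to a unique functor $\Phi^\flat\colon \widetilde{\G} \to \H_{\cong}$ with $\epsilon\circ\Phi^\flat = \Phi$, where $\epsilon\colon \H_{\cong}\to\H$ is the counit (the first projection on arrows). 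This defines the assignment on objects, $(\Xi,\Phi) \mapsto (\Xi,\Phi^\flat)$. Its inverse is $(\Xi,\Psi)\mapsto(\Xi,\epsilon\circ\Psi)$, and the two assignments are strictly inverse on spans.

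Next I will define the functor on 2-cells using \cref{prop:2-cells_are_simplified}. A 2-cell between $(\Xi_1,\Phi_1)$ and $(\Xi_2,\Phi_2)$ is uniquely an internal transformation $\tau\colon \Phi_1\Xi_2' \Rightarrow \Phi_2\Xi_1'$ defined on $\widetilde{\G}_1\times_\G\widetilde{\G}_2$. Pullbacks of groupoids along functors of groupoids are again groupoids, so this apex is a groupoid. The same simplification holds in $\AnaGrpd(\C)$: its 2-cells are computed exactly as in $\AnaCat(\C)$, because the apexes involved are all groupoids, so \cref{prop:2-cells_are_simplified} applies verbatim. The 2-cells on the right are therefore transformations $\Phi_1^\flat\Xi_2'\Rightarrow\Phi_2^\flat\Xi_1'$. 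The 2-adjunction restricted to invertible transformations gives a bijection between invertible $\tau$ of the first kind and transformations of the second kind; in $\Grpd(\C)$ all transformations are invertible. This bijection respects vertical composition and identities, since it is the hom-isomorphism of a 2-functorial adjunction and composition of 2-cells in the localisation is computed by whiskering representatives into common pullbacks. I also need that a 2-cell of $\AnaCat(\C)$ is invertible exactly when its representative $\tau$ is invertible. This follows from uniqueness of representatives together with the fact that composites of simplified representatives are again simplified. The result is a bijection on objects and isomorphisms on hom-sets, so in fact an isomorphism of categories, and in particular an equivalence.

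Finally I will check pseudonaturality. In $\H$, postcomposition of anafunctors with a functor $F\colon\H\to\H'$ only alters the right leg, and $(F\Phi)^\flat = F_{\cong}\Phi^\flat$ by naturality of the 1-adjunction. Extending this to a general anafunctor $\H\slashedrightarrow\H'$ requires the core to send surjective weak equivalences to surjective weak equivalences. Full faithfulness of $\Xi_{\cong}$ follows because invertibility of an arrow is reflected along a fully faithful functor, which is a pullback computation. Surjectivity on objects is immediate, since the object map is unchanged. In $\G$, precomposition with an anafunctor $\G'\slashedrightarrow\G$ is computed by pulling back the left leg, and the correspondence clearly commutes with this. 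The coherence isomorphisms come from the canonical pullback isomorphisms, which are the same on both sides.

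I expect the main obstacle to be the bookkeeping that identifies composition of 2-cells in the bicategory of fractions with the simplified representatives. The point to verify is that the bijection of \cref{prop:2-cells_are_simplified} is compatible with vertical composition and with whiskering by anafunctors. I would handle this by noting that every operation pulls back along fully faithful, surjective-on-objects functors, whose object maps are epimorphisms, and then applying the cancellation argument used in that proof.
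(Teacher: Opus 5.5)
Your proposal is correct and follows the paper's own argument: the apex $\widetilde{\G}$ is a groupoid by \cref{rem:ff_induced_domain}, the right leg factors through $\H_{\cong}$ by the universal property of the core, 2-cells are transported via their simplified representatives from \cref{prop:2-cells_are_simplified}, and the inverse is given by composing with $\H_{\cong}\to\H$. You supply more than the paper, which omits both the verification and pseudonaturality. One imprecision: vertical composites of simplified representatives are not literally simplified---they live on a triple pullback and must be descended, and the identity 2-cell is represented by $\Phi_1$ of the canonical comparison arrows rather than a literal identity---so your claim that a 2-cell is invertible exactly when its representative is really rests on the cancellation argument of your last paragraph.
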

\begin{proof}
    We describe the functors inducing the equivalence, but omit the calculation that these indeed give an equivalence of categories and the proof of pseudonaturality.  In the backward direction, we send an anafunctor $\G \slashedrightarrow \H_{\cong}$ to the composite
    \[\begin{tikzcd}
    \widetilde{\G} & {\H_{\cong}} & \H \\
    \G
    \arrow[from=1-1, to=1-2]
    \arrow["\sim"{anchor=south, rotate=90}, from=1-1, to=2-1]
    \arrow[from=1-2, to=1-3]
    \end{tikzcd}\]
    and a 2-cell, which by \cref{prop:2-cells_are_simplified} can be equated with a natural transformation $a \colon \widetilde{G}_0 \times_{G_0} \widetilde{G}'_0 \to (H_{\cong})_1$, is sent to its composite with the inclusion $(H_{\cong})_1 \to H_1$.
    
    For the forward direction, suppose we are given an anafunctor $\G \xleftarrow{\sim} \widetilde{\G} \to \H$. Note that since $\widetilde{\G}$ admits a fully faithful functor to an internal groupoid, $\widetilde{\G}$ itself will also be an internal groupoid.  Hence, by the universal property of the core, $\widetilde{\G} \to \H$ factors as $\widetilde{\G} \to \H_{\cong} \to \H$.  We send $\G \xleftarrow{\sim} \widetilde{\G} \to \H$ to the anafunctor $\G \xleftarrow{\sim} \widetilde{\G} \to \H_{\cong}$.  If $a \colon \widetilde{G}_0 \times_{G_0} \widetilde{G}'_0 \to H_1$ is an invertible natural transformation, representing a 2-cell of anafunctors, then $a$ factors through $(H_{\cong})_1 \to H_1$, thus yielding the action on arrows.
\end{proof}

\subsection{Toposes}

Underpinning and motivating many of our constructions is topos theory.  Because our focus will not be on the topos-theoretic consequences of our results (which were already well-known independently) we will not describe toposes in excessive detail here.  Instead, we will only recall the pertinent facts for this paper, and direct the interested reader to \cite{Elephant,SGL} for a broader introduction.  We use the word `topos' to unambiguously mean a Grothendieck topos, and we will use $\Topos$ to denote the bicategory of toposes, geometric morphisms and natural transformations between the inverse image functors.

Toposes can be understood as generalising the category of sets, while retaining many of the desirable properties of $\Set$, such as a rich internal logic.  Two aspects of toposes will be important for understanding our paper: the first is toposes as `generalised spaces', in which the category $\Set$ plays the role of the one-point space; the second is toposes as `mathematical universes', in which $\Set$ plays the role of a \emph{ground model} (cf.\ \cite[\S 14]{jech}).

\subsubsection{Toposes as generalised spaces}\label{sec:toposes_as_spaces}

We will make use of the fact that every locale $H$ yields a topos: namely, the slice category ${\bf LH}/H$ of local homeomorphisms over $H$ describes a topos, the \emph{topos of sheaves} on the locale $H$, which we denote by $\Sh(H)$.  Equivalently, this topos can be described as the category of functors $\O H \to \Set$ satisfying the \emph{sheaf condition} (see \cite[\S II]{SGL}).  In this manner, topos theory subsumes locale theory in the sense that this describes a fully faithful functor $\Sh \colon \Loc \hookrightarrow \Topos$.
\begin{example}
    A local homeomorphism over the one-point space $1$ is the same datum as a discrete locale or set, and so $\Sh(1) \simeq \Set$.
\end{example}
While not every topos is localic, i.e.\ of the form $\Sh(H)$ for some locale $H$, the extension of locales to all toposes is not so vast: in \cite{joyal1984galois}, Joyal and Tierney showed that every topos can be represented by a localic category in the following manner.
\begin{definition}\label{def:sheaf_over_localic_cat}
    Let $\H$ be a localic category.  A \emph{sheaf} over $\H$ consists of
    \begin{enumerate}
        \item a local homeomorphism $p \colon A \to H_0$,
        \item and an \emph{action} $\beta \colon A \times_{H_0} H_1 \to A$ (where the $A \times_{H_0} H_1$ is the pullback of $p$ and $s$) satisfying the equations (in point-set notation)
        \[
        p(\beta(x,g)) = t(g), \ \ \beta(\beta(x, g), h) = \beta(x, m(g,h)), \ \ \beta(x,ep(x)) = x.
        \]
    \end{enumerate}
    A morphism of sheaves $(A,p,\beta) \to (A',p',\beta')$ is a locale morphism $f \colon A \to A'$ such that $p = p' \circ f$ and $\beta(f(x),g) = f(\beta'(x,g))$. Together, this data defines a topos $\Sh(\H)$, the \emph{topos of sheaves} over $\H$.

    We note that this notion of sheaf over $\H$ is just a different way of describing a discrete opfibration over $\H$ whose underlying locale map is a local homeomorphism, and a morphism of sheaves corresponds to a functor between the domains of the discrete opfibrations making the triangle commute.

    In this way a sheaf over a categorically discrete localic category reduces to the usual notion of sheaf over on a locale and a sheaf over a topologically discrete localic category reduces to the usual notion of presheaf on a small category.

    The assignment of localic categories to their categories of sheaves is pseudofunctorial in that there is a pseudofunctor $\Sh \colon \LocCat \to \Topos$ which acts as follows.
    \begin{enumerate}
        \item For an internal functor $\Phi \colon \H \to \G$, the corresponding geometric morphism $\Sh(\Phi) \colon \Sh(\H) \to \Sh(\G)$ has, as its inverse image, the functor which sends a sheaf $(A,p,\beta)$ over $\G$ to the pullback
        \[\begin{tikzcd}
        {\Sh(\Phi)^* A} & A \\
        H_0 & G_0
        \arrow[from=1-1, to=1-2]
        \arrow[from=1-1, to=2-1]
        \arrow["\lrcorner"{anchor=center, pos=0.125}, draw=none, from=1-1, to=2-2]
        \arrow["p", from=1-2, to=2-2]
        \arrow["\Phi_0"', from=2-1, to=2-2]
        \end{tikzcd}\]
        endowed with the obvious $H_1$-action $\beta'$ defined, in point-set notation, by the equation
        \[
        \beta'((s(h),y), h) = (t(h),\beta(y,h)).
        \]
        \item For an internal transformation $a \colon \Psi \Rightarrow \Phi$ with underlying map $a \colon H_0 \to G_1$, the corresponding natural transformation $\Sh(\Phi)^\ast \Rightarrow \Sh(\Psi)^\ast$ has, as its component at $A \in\Sh(\G)$, the morphism $a_A \colon \Sh(\Phi)^\ast A \to \Sh(\Psi)^\ast A$ given, in point-set notation, by $a_A(x,y) = (x,\beta(a(x),y))$.
    \end{enumerate}
\end{definition}

\begin{example}
    Restricting $\Sh \colon \LocCat \to \Topos$ to the categorically discrete localic categories results in the usual embedding of locales into toposes $\Sh \colon \Loc \hookrightarrow \Topos$.
\end{example}

The landmark result of Joyal and Tierney \cite{joyal1984galois} asserts that every topos is of the form $\Sh(\H)$ for some localic category $\H$.  In fact, it says something stronger: every topos is of the form $\Sh(\H)$ for some localic \emph{groupoid} $\H$ which is both \emph{open}, meaning that the source and target morphisms $H_1 \rightrightarrows H_0$ are open, and moreover \emph{étale-complete} in the following sense.
\begin{definition}
    A localic groupoid $\G$ is \emph{étale-complete} if the square
    \[
    \begin{tikzcd}
        \Sh(G_1) \ar{r}{\Sh(s)} \ar{d}[']{\Sh(t)} \ar[draw = none]{rd}[anchor = center, pos = 0.125]{\lrcorner} & \Sh(G_0) \ar{d}{\Sh(U)} \\
        \Sh(G_0) \ar{r}[']{\Sh(U)} & \Sh(\G)
    \end{tikzcd}
    \]
    is a bipullback of toposes, where $U$ is the canonical functor defined in \cref{ex:inclusion_of_identities}.
\end{definition}
\begin{remark}\label{rem:etale_completion}
    Each generalised point $\alpha \colon X \to G_1$, with source and target $x$ and $y$ respectively, yields a natural isomorphism
    \[\begin{tikzcd}
	{\Sh(X)} & {\Sh(G_0)} \\
	{\Sh(G_0)} & {\Sh(\G)},
	\arrow["{\Sh(x)}", from=1-1, to=1-2]
	\arrow["{\Sh(y)}"', from=1-1, to=2-1]
	\arrow["\cong"{description}, draw=none, from=1-2, to=2-1]
	\arrow["{\Sh(U)}", from=1-2, to=2-2]
	\arrow["{\Sh(U)}"', from=2-1, to=2-2]
    \end{tikzcd}\]
    but in general there may be more natural isomorphisms $\Sh(U \circ x) \cong \Sh(U \circ y)$ than are instantiated by points of $G_1$.  Intuitively, a localic groupoid $\G$ is étale-complete when there is no difference between the natural isomorphisms `seen' by the topos $\Sh(\G)$ and the isomorphisms contained in the groupoid $\G$.

    As explained in \cite[\S 7.2]{moerdijk1988},
    every localic groupoid $\G = (G_1 \rightrightarrows G_0)$ can be completed to an \'{e}tale-complete one with the same locale of objects while taking the new locale of arrows to be the locale $\hat{G}_1$ for which the square
    \[
    \begin{tikzcd}
        & \Sh(\hat{G}_1) \ar{r} \ar{d} \ar[draw = none]{rd}[anchor = center, pos = 0.125]{\lrcorner} & \Sh(G_0) \ar{d}{\Sh(U)} \\
        & \Sh(G_0) \ar{r}[']{\Sh(U)} & \Sh(\G) 
    \end{tikzcd}
    \]
    is a (bi-)pullback.

    Expressing étale-completeness for localic groupoids requires consideration of (bi-)pullbacks in $\Topos$ since the corresponding square
    \[
    \begin{tikzcd}
         G_1 \ar{r}{s} \ar{d}[']{t} & G_0 \ar{d}{U} \\
        G_0 \ar{r}[']{U} & \G
    \end{tikzcd}
    \]
    in $\LocGrpd$ is \emph{always} a (bi)pullback (cf.\ \cite[Remark 5.3]{wrigley_25}).
\end{remark}

Let $\H$ and $\G$ be localic groupoids.  When $\G$ is étale-complete, the geometric morphisms between the sheaf toposes $\Sh(\H) \to \Sh(\G)$ can be described entirely in terms of the localic groupoids: they correspond to anafunctors $\H \slashedrightarrow \G$.  As shown in \cite[\S 7]{moerdijk1988}, we can use this observation and the localisation result of \cite[\S 1]{gabriel_zisman} to establish an equivalence between toposes and étale-complete localic groupoids with anafunctors.  Moreover, using the techniques of \cite{pronk1996bicategories}, this equivalence can be upgraded to a bi-equivalence:
\begin{theorem}[Theorem 7.7 \cite{moerdijk1988}, \cite{pronk1996bicategories}]
    The pseudofunctor $\Sh$ induces a bi-equivalence $\Topos_{\cong} \simeq \LocGrpdAna_{\text{\rm \'{E}C}}$, where $\LocGrpdAna_{\text{\rm \'{E}C}}$ is the restriction of $\LocGrpdAna$ to \'{e}tale-complete localic groupoids.
\end{theorem}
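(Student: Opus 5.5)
The plan follows the route of \cite[\S 7]{moerdijk1988}, upgraded to a bi-equivalence using \cite{pronk1996bicategories}. There are three steps: show $\Sh$ descends to anafunctors, show it is essentially surjective on objects, and show it is an equivalence on hom-categories when the target is étale-complete.

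\textbf{Step 1: descending $\Sh$ to anafunctors.} The pseudofunctor $\Sh\colon \LocGrpd \to \Topos$ of \cref{def:sheaf_over_localic_cat} sends invertible internal transformations to invertible natural transformations, so it restricts to $\LocGrpd_{\cong} \to \Topos_{\cong}$. To pass to $\LocGrpdAna$ it suffices, by the universal property of the bicategory of fractions \cite{pronk1996bicategories}, that $\Sh$ sends surjective weak equivalences $\Xi\colon \widetilde{\G}\to \G$ to equivalences of toposes. For this I would use descent. The inverse image $\Sh(\Xi)^*$ is pullback of sheaves along $\Xi_0$. Because $\Xi$ is fully faithful, $\widetilde{G}_1$ is the pullback of $G_1$ along $\Xi_0\times\Xi_0$. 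An equivariant sheaf on $\widetilde{\G}$ therefore carries, in particular, a descent datum for the kernel pair of $\Xi_0$ (via the arrows sent to identities). Since $\Xi_0$ is an effective descent morphism, and local homeomorphisms descend along it, such a sheaf comes from a unique sheaf on $G_0$, and the remaining action descends to a $G_1$-action. This gives a quasi-inverse. The result is a pseudofunctor $\LocGrpdAna \to \Topos_{\cong}$, which we restrict to the étale-complete groupoids.

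\textbf{Step 2: essential surjectivity.} This is exactly the Joyal--Tierney theorem \cite{joyal1984galois}: every topos is equivalent to $\Sh(\G)$ for an open étale-complete localic groupoid $\G$.

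\textbf{Step 3: local equivalence, the main obstacle.} Fix localic groupoids $\H$ and $\G$ with $\G$ étale-complete. I would show that $\LocGrpdAna(\H,\G) \to \Topos_{\cong}(\Sh\H,\Sh\G)$ is an equivalence of categories. \begin{enumerate}
\item \emph{Faithful and full on 2-cells.} By \cref{prop:2-cells_are_simplified}, a 2-cell between anafunctors is an internal transformation. A natural isomorphism of geometric morphisms $\Sh(\widetilde H)\to\Sh\G$ restricts along $\Sh(\widetilde H_0)\to\Sh(\widetilde\H)$ to a natural isomorphism between $\Sh(U\circ x)$ and $\Sh(U\circ y)$ for two maps $x,y\colon \widetilde H_0\to G_0$. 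The bipullback defining étale-completeness turns this into a unique map $\widetilde H_0\to G_1$. Naturality of the original isomorphism then gives naturality of this map.
\item \emph{Essentially surjective on 1-cells.} Given $f\colon \Sh(\H)\to\Sh(\G)$, compose with $\Sh(H_0)\to\Sh(\H)$. Then pull back the open surjection $\Sh(G_0)\to\Sh(\G)$ along this composite to get a topos $\mathcal{F}$ over $\Sh(H_0)$. Cover $\mathcal{F}$ by a locale $\widetilde H_0$ through an open surjection, again using Joyal--Tierney. The composite $\widetilde H_0\to H_0$ is then an open surjection, hence of effective descent, and we set $\widetilde\H$ to be the induced fully faithful groupoid as in \cref{rem:ff_induced_domain}. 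The map $\widetilde H_0\to\mathcal{F}\to \Sh(G_0)$ gives the object part of a functor $\widetilde\H\to\G$. The canonical isomorphisms between the two composites $\widetilde H_1\rightrightarrows \Sh(\G)$ give, by the étale-completeness bipullback, the arrow part $\widetilde H_1\to G_1$. Checking functoriality and that $\Sh$ of the resulting anafunctor is isomorphic to $f$ uses the uniqueness from the bipullback again.
\end{enumerate}
The delicate point is the open-surjectivity bookkeeping: a geometric morphism from a topos that is open over a localic topos need not itself be localic, which is why the locale cover of $\mathcal{F}$ is needed. Here I would follow Moerdijk's argument rather than reprove it.

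Finally, Steps 1--3 give a pseudofunctor that is essentially surjective on objects and a local equivalence. By the standard Whitehead-type criterion for bicategories, this is a bi-equivalence, with the coherence supplied by \cite{pronk1996bicategories}.
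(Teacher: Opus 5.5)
Your route is the one the paper indicates, since it gives no proof beyond citing Moerdijk and Pronk: Moerdijk's description of geometric morphisms into étale-complete groupoids, Joyal--Tierney for essential surjectivity, and Pronk for the bicategorical upgrade. Your Step 1 is exactly what the paper proves in \cref{sec:descent_dopf}; take the one-sorted theory with no relation symbols and no axioms in \cref{cor:Tmod_extends}.

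The gap is in the essential-surjectivity part of Step 3. There you use that $\Sh(G_0)\to\Sh(\G)$ is an \emph{open} surjection. That holds when $\G$ is an open groupoid, but openness is not among the hypotheses: the statement quantifies over all étale-complete localic groupoids. Without openness, $\Sh(G_0)\to\Sh(\G)$ is still a surjection, since forgetting the action is faithful. Surjections are not pullback-stable, however, so nothing ensures that $\widetilde H_0\to H_0$ is of effective descent, i.e.\ that $\widetilde\H\to\H$ is a weak equivalence at all.

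A cleverer choice of cover cannot avoid this. Suppose $(\Xi,\Phi)$ is any anafunctor with $\Sh(\Phi)\cong f\circ\Sh(\Xi)$. Restricting this isomorphism along $\Sh(\widetilde H_0)\to\Sh(\widetilde\H)$ makes $\Xi_0$ factor through $Y\to H_0$, where $\Sh(Y)\simeq\mathcal F$ is your bipullback. Effective descent morphisms are right-cancellable (if $g\circ h$ is of effective descent, so is $g$), so $Y\to H_0$ must itself be of effective descent. You should therefore do one of the following:
\begin{enumerate}
\item restrict the statement to open étale-complete groupoids, which is harmless for Step 2 because Joyal--Tierney produce open ones; or
\item give a separate proof that $Y\to H_0$ is of effective descent when $\G$ is étale-complete but not open.
\end{enumerate}

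There are also two smaller points.
\begin{enumerate}
\item Your reason for the extra Joyal--Tierney cover of $\mathcal F$ is mistaken. $\Sh(G_0)\to\Sh(\G)$ is localic because its domain is localic, and localic morphisms are stable under bipullback. Hence $\mathcal F\to\Sh(H_0)$ is localic and $\mathcal F\simeq\Sh(Y)$, so you may take $\widetilde H_0=Y$ directly. For open $\G$ this is already an open surjection onto $H_0$; the extra cover is harmless but unnecessary.
\item The final check $\Sh(\Phi)\cong f\circ\Sh(\Xi)$ does not follow from the bipullback for $\G$, which only gives functoriality of $\widetilde H_1\to G_1$. It needs the universal property of $\Sh(\widetilde\H)$ as the codescent object of $\Sh(\widetilde H_1)\rightrightarrows\Sh(\widetilde H_0)$. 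Alternatively, compare the two inverse images directly as $\widetilde H_1$-equivariant sheaves.
\end{enumerate}
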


\subsubsection{Toposes as mathematical universes}

The second way in which we will use toposes is as `{mathematical universes}'.  It is well known that toposes possess rich enough structure to internalise all of higher order intuitionistic logic (see \cite[\S VI]{SGL}, \cite[\S II]{lambek_scott} or \cite[\S 14]{goldblatt}).  Thus, toposes behave as `universes' in which to perform constructive mathematics.

Just as reasoning in a point-like fashion about locales can simplify proofs as discussed in \cref{sec:reasoning_with_points}, we can perform a mathematical argument in the internal logic of topos that would otherwise be unwieldy or unfeasible when working externally.  Once the internal construction is obtained, we can `externalise' the result to obtain a valid external construction without needing to manipulate the explicit, external objects involved.  We give a simple example of working internally below --- further examples can be found, for instance, in \cite{ingo_phd}.
\begin{example}
    Let $X$ be an object of a topos $\topos$, and let ${\sim} \subseteq X \times X$ be an internal partial equivalence relation, i.e.\ an internal relation, which, in the internal logic of $\topos$, satisfies the axioms
    \begin{align*}
       & \forall x,y \in X.\ x \sim y \implies y \sim x, \\
       & \forall x,y,z \in X.\ (x\sim y) \land (y\sim z) \implies x \sim z.
    \end{align*}
    Then we can form the \emph{subquotient} $X / {\sim} \in \topos$, i.e.\ the quotient by (the restriction of) $\sim$ of the subobject $\{x \in X \mid x \sim x\} \subseteq X$.

    If we now suppose that $\sim$ is \emph{inhabited}, i.e.\ it satisfies $\exists x \in X.\ x \sim x$, and that it satisfies the axiom $\forall x, y \in X.\ (x \sim x) \land (y \sim y) \implies x \sim y$ expressing that all pairs in the subobject $\{x \in X \mid x \sim x\} \subseteq X$ are related to each other, then it is clear from the internal logic that there is a single equivalence class.  Externally, this entails that $X/ {\sim}$ is isomorphic to the terminal object.
\end{example}

In this paper, we will only ever need to work in the internal logic of the topos $\Sh(X)$ of sheaves on a locale $X$, which can be understood as performing $\O X$-valued set theory using the complete Heyting algebra structure on $\O X$ (see \cite{Fourman1979,bell}).  Unless $\O X$ is a complete Boolean algebra, the topos $\Sh(X)$ does not satisfy the law of the excluded middle, and so we must ensure that our arguments are constructive.  Particularly important will be the theory of internal locales of $\Sh(X)$, for which we recall the following:
\begin{enumerate}
    \item The objects of $\Sh(X)$, i.e.\ the `internal sets', are given by local homeomorphisms $ p \colon E \to X$.  This is one sense in which a local homeomorphism is a `discrete space' over the codomain.
    \item The category of internal locales in $\Sh(X)$ is equivalent to the slice category $\Loc/X$ (see \cite[Theorem C1.6.3]{Elephant}).
    \item Finally, the subcategory of internal compact Hausdorff locales of $\Sh(X)$ is identified as the subcategory ${\bf PS}/X$ of proper separated bundles over $X$.
\end{enumerate}

\section{Generalised presentations and classifying locales}\label{sec:gen_prens}

To prove the `dual' results of this paper we will make use of a very general notion of frame presentation. Before we describe this general construction we will first review the usual notion of presentation.

As algebraic structures, frames can be presented by generators and relations, with the frame obtained from a presentation $\langle G \mid R\rangle$ being the free frame on $G$ subject to the relations $R$. While relations are traditionally taken to be equalities, it is also possible to take them to be inequalities, and this is often more convenient. From a logical perspective, we can interpret the generators as atomic propositions and the relations as axioms in propositional geometric logic. The presented frame is then the Lindenbaum--Tarski algebra for the propositional geometric theory (see, for instance, \cite[\S 2]{vickers2007locales}). We say the corresponding locale is the \emph{classifying locale} for the theory. In particular, the points of the classifying locale correspond to models of the theory.
Moreover, generalised points with domain $A$ correspond to models in the sheaf topos $\Sh(A)$.

Let us look at some examples.

\begin{example}[Sierpiński space]\label{ex:sierpinski}
The theory consisting of only one proposition and no axioms yields the free frame on one generator.
The classifying locale $\Srpnsk$ is the Sierpiński space and its points correspond to truth values --- classically, $\bot$ and $\top$.

By the universal property of the free frame on one generator, locale maps $X \to \Srpnsk$ correspond to opens of $X$. The order on these locale maps agrees with the usual order on the opens.
\end{example}

\begin{example}[The Dedekind reals]\label{ex:dedekindreals}
Recall the construction of the real numbers by Dedekind cuts. A Dedekind cut on $\Q$ is given by a pair $(L,U)$ of subsets of $\Q$ satisfying certain axioms. A Dedekind cut $(L,U)$ represents a real that is larger than the rationals in the `lower cut' $L$ and smaller than the rationals of the `upper cut' $U$.
The theory of Dedekind cuts can be expressed in geometric logic by taking an atomic proposition with the name $[p \in L]$ for each $p \in \Q$, an atomic proposition $[p \in U]$ for each $p \in \Q$ and the following axioms.
\begin{displaymath}
\begin{array}{r@{\hspace{1.5ex}}c@{\hspace{1.5ex}}l@{\quad}@{}l@{\qquad\quad}r@{}}
  {[q \in L]} &\vdash& {[p \in L]} & \text{ for $p \le q$} & \text{($L$ downward closed)} \\
  {[q \in L]} &\vdash& \bigvee_{p > q} {[p \in L]} & \text{ for $q \in \Q$} & \text{($L$ rounded)} \\
   &\vdash& \bigvee_{q \in \Q} {[q \in L]} && \text{($L$ inhabited)} \\
  {[p \in U]} &\vdash& {[q \in U]} & \text{ for $p \le q$} & \text{($U$ upward closed)} \\
  {[q \in U]} &\vdash& \bigvee_{p < q} {[p \in U]} & \text{ for $q \in \Q$} & \text{($U$ rounded)} \\
   &\vdash& \bigvee_{q \in \Q} {[q \in U]} && \text{($U$ inhabited)} \\
  {[p \in L]} \land {[q \in U]} &\vdash& \bot & \text{ for $p \ge q$} & \text{($L$ and $U$ disjoint)} \\
   &\vdash& {[p \in L]} \lor {[q \in U]} & \text{ for $p < q$} & \text{(locatedness)}
\end{array}
\end{displaymath}
Note that a sequent $\phi \vdash \psi$ is interpreted as saying that $\psi$ holds whenever $\phi$ does. If $\phi$ is missing it is understood to be $\top$ (i.e.\ true).
A model of such a theory assigns a truth value to each atomic proposition such that the sequents are satisfied. In this case, such a model gives a Dedekind cut $(L,U)$ where $L$ is the set of $p \in \Q$ for which $[p \in L]$ is true and $U$ is the set of $p \in \Q$ for which $[p \in U]$ is true.

The classifying locale of this theory is just the usual locale of reals. The frame presentation is obtained by taking a generator for each atomic proposition and a relation for each axiom. Each sequent $\phi \vdash \psi$ is interpreted as an \emph{inequality} $\phi \le \psi$. It can be made to be an equality by using the lattice structure: $\phi \wedge \psi = \psi$.
\end{example}

\begin{example}[Cantor space]
Consider the theory with two basic propositions, $Z_n$ and $O_n$, for each natural number $n \in \N$, and the axioms $\top \vdash Z_n \lor O_n$ and $Z_n \land O_n \vdash \bot$ for each $n$. This describes infinite sequences of Boolean values and the resulting locale is the Cantor space $2^\N$.
\end{example}

We also note that imposing an additional relation of the form $\top \vdash \phi$ to a presentation results in an open sublocale of the original locale, while adding a relation of the form $\phi \vdash \bot$ results in a closed sublocale.

\subsection{Generalised presentations}

In the usual notion of presentation of a frame, sets play a distinguished role, since we start with a \emph{set} of generators and a \emph{set} of relations.
In order to maintain a parallel between the construction of classifying locales for geometric and for dual geometric theories, it will be natural to consider a dual kind of presentation where the generators and relations are not sets, but more general locales, often compact Hausdorff locales.

In \cite{manuellThesis,manuell2022spectrum}, the first author introduced a way to present frames by \emph{locales} of generators and relations, called \emph{generalised presentations}. It is simplest when the locales of generators and relations are locally compact (as is the case when they are sets or compact Hausdorff locales).

Let us start by reviewing the usual notion of presentation. In the standard situation where generators and relations are indexed by sets $G$ and $R$, the frame $\langle G \mid R\rangle$ is the initial quotient of the free frame $\langle G\rangle$ on $G$ such that the relations indexed by $R$ hold. These relations are specified by two functions from $R$ into $\langle G\rangle$ --- explicitly, we map each relation ``$\phi \sim \psi$'' in $R$ (where $\phi , \psi \in \langle G \rangle$) to $\phi$ and $\psi$ respectively. Alternatively, by the universal property of free frames, they can be specified by a pair of frame homomorphisms from $\langle R\rangle$ to $\langle G\rangle$. The presented frame, $\langle G \mid R\rangle$, is the coequaliser of these frame homomorphisms.

Now recall that, since the free frame on one generator is the frame of opens of Sierpiński space $\Srpnsk$, the free frame $\langle G\rangle$ is the frame of opens of $\Pi_{g \in G} \Srpnsk = \Srpnsk^G$ (because, as a left adjoint, the free frame functor preserves coproducts). The locale corresponding to the presented frame is then obtained by an equaliser \[X \hookrightarrow \Srpnsk^G \rightrightarrows \Srpnsk^R.\]

\begin{example}
 Consider the presentation $\langle a, b \mid a \wedge b = 0, a \vee b = 1 \rangle$.
 Here $G = \{a,b\}$ and $R$ is a two-element set, say $\{\triangle,\square\}$, indexing the relations. The frame maps corresponding to $\Srpnsk^G \rightrightarrows \Srpnsk^R$ in this case are then defined (using the universal property of the free frame) by
 \begin{minipage}{0.4\textwidth}
 \begin{align*}
  \triangle &\mapsto a \wedge b \\
  \square &\mapsto a \vee b
 \end{align*}
 \end{minipage}
 \begin{minipage}{0.1\textwidth}
 and
 \end{minipage}
 \begin{minipage}{0.4\textwidth}
 \begin{align*}
  \triangle &\mapsto 0 \\
  \square &\mapsto 1.
 \end{align*}
 \end{minipage}
\end{example}

At this point we reinterpret the powers $\Srpnsk^G$ and $\Srpnsk^R$ as exponentials (viewing the sets $G$ and $R$ as discrete locales). Indeed, any discrete locale $G$ can be written a coproduct $\bigsqcup_{g \in G} 1$ in $\Loc$ (where here $G$ is viewed as an indexing set) and the exponential $\Srpnsk^{(-)}$ sends coproducts to products, so that the exponential $\Srpnsk^G$ is the same as $\prod_{g \in G} \Srpnsk$. Now by treating $G$ and $R$ as exponents, a similar construction can be carried out for arbitrary exponentiable (i.e.\ locally compact by \cite{hyland1981exponentials}) locales $G$ and $R$, not just discrete locales.
It is perhaps easiest to understand the maps $\Srpnsk^G \to \Srpnsk^R$ in terms of their exponential transposes $\Srpnsk^G \times R \to \Srpnsk$ as $R$-indexed families of opens of $\Srpnsk^G$.

\begin{remark}
 It will often be convenient to replace the (co)equaliser above with a (co)inserter so as to describe the basic relations in terms of inequalities instead of equalities, but the argument is essentially the same. (A coinserter of two frame maps takes the largest quotient so that the one map compares less than another; see \cite[Example B1.1.4(d)]{Elephant}. This can be used to enforce relations involving inequalities which commonly arise from the interpretation in terms of axioms in geometric logic.)
\end{remark}

\begin{remark}\label{rem:presentation_base_change}
    Note that since change of base preserves limits and exponentials of exponentiable locales (see \cite[Lemma A1.5.2(ii)]{Elephant}) it preserves generalised presentations. Thus, generalised presentations describe the same locale in every (localic) topos. %
\end{remark}

\subsection{Dual presentations}\label{sec:dual}

Let us consider generalised presentations in the case that the locales of generators and relations are compact Hausdorff. This case will be explored in more detail in future work.

First suppose there are no relations. Then $\Srpnsk^G$ (for a compact Hausdorff locale $G$) is the locale of maps from $G$ to $\Srpnsk$, which can be viewed as either the locale of open sublocales of $G$ or the locale of closed sublocales of $G$ (since $\Srpnsk$ classifies both of these and they are in correspondence by taking complements). While with standard presentations we take the former perspective, here it will be more convenient to take the latter one.

\begin{example}[Closed partial equivalence relations]\label{ex:closed_partial_equiv_rels}
Throughout this subsection, we will consider the following illustrative example: a description of the locale of closed partial equivalence relations on a compact Hausdorff locale $Y$. The locale of generators is $G = Y \times Y$. Before imposing any relations, this gives the locale $\Srpnsk^{Y \times Y}$ which classifies \emph{all} closed binary relations on $Y$.
\end{example}

Relations are described by maps $\Srpnsk^G \rightrightarrows \Srpnsk^R$, or equivalently, maps $\Srpnsk^G \times R \rightrightarrows \Srpnsk$, or even $R \rightrightarrows \Srpnsk^{\Srpnsk^G}$. Each point $g$ of $G$ corresponds to a basic closed $[g \in F]$ in $\Srpnsk^G$ (under the inclusion into the double dual $G \to \Srpnsk^{\Srpnsk^G}$) consisting of the closed sublocales $F$ of $G$ that contain $g$. Maps from $R$ to $\Srpnsk^{\Srpnsk^G}$ that factor through $G \to \Srpnsk^{\Srpnsk^G}$ then describe relations that force one basic closed to compare less than or equal to another.  For instance, if there were one relation ``$g \leqslant h$'', then the pair of maps would be given by
\[
\begin{tikzcd}
    R= 1 \ar[shift left]{r}{g} \ar[shift right]{r}[']{h} & G \ar{r} & \Srpnsk^{\Srpnsk^G}.
\end{tikzcd}
\]

\begin{excont}[Continued]
In our example, each point $(x,y) \in Y$ corresponds to a basic closed $[(x,y) \in E]$ consisting of the (closed) relations which contain the pair $(x,y)$. We can force the relations to be symmetric by imposing the condition $[(x,y) \in E] = [(y,x) \in E]$ (or equivalently, $[(x,y) \in E] \le [(y,x) \in E]$) for all $x,y \in Y$. This corresponds to taking $R = Y \times Y$ and taking the equaliser of the maps $\Srpnsk^{Y \times Y} \rightrightarrows \Srpnsk^{Y \times Y}$, whose corresponding maps $Y \times Y \rightrightarrows \Srpnsk^{\Srpnsk^{Y \times Y}}$ are given by the canonical inclusion into the double dual on the one hand, and on the other, the composite of this inclusion with the swap map $Y \times Y \to Y \times Y$, i.e.\ the map $(y_1, y_2) \mapsto (y_2, y_1)$.
\end{excont}

We can build up more complicated relations from these simpler ones using that fact that $\Srpnsk$ is an internal distributive lattice. (Note that we are referring \emph{not} to the lattice structure of the frame of opens of $\Srpnsk$, but to an internal lattice structure in $\Loc$.) Recall from \cref{ex:sierpinski} that, working classically, the two points of $\Srpnsk$ describe the two-element distributive lattice $\{\bot,\top\}$.  The join operation on these points, corresponding to logical `or', and the meet operation, corresponding to logical `and', yield morphisms $\Srpnsk^2 \rightrightarrows \Srpnsk$ that describe the structure of the internal distributive lattice on $\Srpnsk$.  If we were working intuitionistically, then we still obtain an internal distributive lattice structure on $\Srpnsk$, using that the points of $\Srpnsk$ correspond to the truth values in the ambient topos.

Since closeds have the reverse ordering compared to opens, the meet operation on $\Srpnsk$ induces a \emph{union} operation on the basic closeds, while the join operation induces an \emph{intersection} on basic closeds. These allow us to impose that an expression $\phi$, built from finite meets and finite joins of basic closeds, compares less than another such expression $\psi$, i.e.\ that $\phi \leqslant \psi$. (We can also handle the constants $0$ and $1$ in a similar way.)

\begin{excont}[Continued]
Returning to our example, let us consider how to impose that our relations are transitive. We want $[(x,y) \in E] \wedge [(y,z) \in E] \le [(x,z) \in E]$. Now $R = Y \times Y \times Y$. The right-hand side is represented by the composite of the projection onto the outer factors $Y \times Y \times Y \xrightarrow{\pi_{1,3}} Y \times Y$ and the map into the double dual $Y \times Y \to \Srpnsk^{\Srpnsk^{Y\times Y}}$. The map corresponding to the left-hand side is obtained by taking two similar maps where the projection is onto the first two factors and the last two factors respectively, pairing them to give a map into $\Srpnsk^{\Srpnsk^{Y\times Y}} \!\! \times \Srpnsk^{\Srpnsk^{Y\times Y}}$ and then composing with the map $(\vee)^{\Srpnsk^{Y\times Y}}$. (Note that we use $\vee$, not $\wedge$, to represent the meet due to the fact the order on closeds is reversed compared to the usual order on $\Srpnsk$. These join and meet operations from $\Srpnsk$ also agree with joins and meets computed using the order enrichment.) Finally, we take the \emph{inserter} of the maps representing the left-hand side and right-hand side. Again because of the order reversal, we actually force the left-hand-side map to become \emph{larger} than the right-hand side with respect to the usual order-enrichment on $\Loc$.

Note that these relations can now be combined with the symmetry ones by taking $R = Y \times Y \sqcup Y \times Y \times Y$ and using the universal property of the coproduct. Now we must use an inserter in both cases (or alternatively an equaliser in both cases). This then gives a locale of symmetric, transitive (closed) relations on $Y$, or (closed) partial equivalence relations.
\end{excont}

In addition to expressions built up using finitary operations, given a map $\Srpnsk^G \times R \times I \to \Srpnsk$, i.e.\ a closed of $\Srpnsk^G \times R \times I$, (where $I$ is also compact Hausdorff) we can ``existentially quantify'' over $I$ to give a map $R \rightrightarrows \Srpnsk^{\Srpnsk^G}$, i.e.\ a closed of $\Srpnsk^G \times R$. By ``existentially quantifying over $I$'', we mean taking the image of the closed sublocale under the proper projection $\Srpnsk^G \times R \times I \to \Srpnsk^G \times R$. (We can also think of this as a join indexed by the compact Hausdorff locale $I$.)

It might help to understand what the analogue of this is for usual (discrete) presentations.
In such a setting we might consider a join $\bigvee_{i \in I} a_i$ of certain expressions $a_i$ representing opens in $\Srpnsk^G$. The family of $a_i$'s can be represented as a function $a\colon \Srpnsk^G \times I \to \Srpnsk$ so that $a_i$ is the preimage of $\top$ under $a(-,i)$. The entire family is given by the open $a^*(\top)$ in $\Srpnsk^G \times I$. Then $\bigvee_{i \in I} a_i$ is equal to the image of $a^*(\top)$ under the projection $\pi_1\colon \Srpnsk^G \times I \to \Srpnsk^G$. This image is open, since here the projection is an open map. This is exactly how existential quantification over $I$ would be defined in categorical logic.

Our case is entirely dual. Instead of using open sublocales we use closed sublocales. The product projection is closed since the locale $I$ is compact. See also the expository note \cite{escardo_compactly_many} where it is explained how to make sense of intersections of opens indexed by compact spaces. (Our situation is obtained from that one by taking complements.)

\begin{remark}\label{rem:distributivity_over_generalised_joins}
  We note that finite meets distribute over such joins indexed by compact Hausdorff locales. This is precisely the Frobenius reciprocity condition from \cref{def:open_and_proper}. Indeed, it is well-known in categorical logic that Frobenius reciprocity is responsible for the equivalence between the expressions $\phi(y) \wedge \exists x.\ \psi(x,y)$ and $\exists x.\ (\phi(y) \wedge \psi(x,y))$. Here, since we are treating the existential quantification as a big join, we understand Frobenius reciprocity as a distributivity condition.
\end{remark}

Finally, if $R = Y \times Y$ we also have a map $\Srpnsk^G \times R \to \Srpnsk$ corresponding to the closed sublocale $\Srpnsk^G \times Y \hookrightarrow \Srpnsk^G \times Y \times Y$ induced by the diagonal on $Y$, which can be understood as corresponding to equality.

\begin{example}\label{ex:partial_surjs_KHaus}
At this point let us consider a second illustrative example. Let $Y$ and $Z$ be a compact Hausdorff locales. We can describe a locale of partial surjections from $Y$ to $Z$ in terms of closed relations. The points will be closed sublocales $F$ of $Y \times Z$ such that $[(y,z) \in F] \wedge [(y,z') \in F] \le \llbracket z = z' \rrbracket$ and $1 \le \bigvee_{y \in Y} [(y, z) \in F]$. The first condition is functionality/single-valuedness, while the second is surjectivity/right-totality.

Let us consider the second condition first. Here $G = Y \times Z$. Let $R = Z$ and let $\Srpnsk^G \times Y \times R \to \Srpnsk$ correspond to inclusion into the double dual $Y \times Z \to \Srpnsk^{\Srpnsk^{Y \times Z}}$. As above we may project out the compact Hausdorff locale $Y$ to give a map $\Srpnsk^G \times R \to \Srpnsk$ and hence a map $\Srpnsk^G \to \Srpnsk^R$. This corresponds to the right-hand side of the inequality. The left-hand side is given by the map induced by the constant $\bot$ map $\Srpnsk^G \times R \to 1 \xrightarrow{\bot} \Srpnsk$ (again due to the order reversal). The coinserter of these two maps gives the locale of right-total (closed) relations on $Y$.

We can also construct a locale of single-valued relations on $Y \times Z$. Here $G = Y \times Z$ and $R = Y \times Z \times Z$. The left-hand side is represented similarly to before as a composite 
\[\Srpnsk^G = \Srpnsk^{Y \times Z} \xrightarrow{(\Srpnsk^{\pi_{1,2}}, \Srpnsk^{\pi_{1,3}})} \Srpnsk^{Y \times Z \times Z} \times \Srpnsk^{Y \times Z \times Z} \xrightarrow{(\vee)^{\Srpnsk^{Y \times Z \times Z}}} \Srpnsk^{Y \times Z \times Z} = \Srpnsk^R.\]
The right-hand side is given by $\Srpnsk^{Y \times Z} \xrightarrow{{!}} 1 \xrightarrow{d} \Srpnsk^{Z \times Z} \xrightarrow{\pi_{2,3}} \Srpnsk^{Y \times Z \times Z}$ where $d$ corresponds to the closed diagonal of $Z \times Z$. The inserter of these maps gives the locale of single-valued relations.  By combining this with the right-totality construction above we obtain the locale of partial surjections from $Y$ to $Z$.
\end{example}

The category of compact Hausdorff locales is a pretopos (see \cite{karazeris2021}). Using the internal logic of a pretopos we can interpret what it means for a (closed) relation to satisfy a coherent formula. If we use the above to form a presentation using this coherent formula, the points of the resulting locale are precisely the relations satisfying the formula.
The upcoming work \cite{manuell2026presentations} will explain this in more detail. %

In this way we can construct a dual version of classifying locales for a coherent theory. We can further extend this to dual geometric theories
by taking arbitrary joins in the order-enrichment and recalling that these corresponds to arbitrary meets of the closed sublocales.

\begin{remark}\label{rem:beyond_dual}
In this paper we will sometimes need to consider presentations whose generators and relations are indexed by potentially infinitely many compact Hausdorff spaces. This takes us beyond what would strictly be consider dual presentations.

There are two approaches to dealing with this. Firstly, we can obtain a generalised presentation where the locales of generators and relations are coproducts of compact Hausdorff locales. This is possible since the coproduct of compact Hausdorff locales is still locally compact. We will still use the conventions and constructions of dual presentations. In particular, we think in terms of closeds instead of opens. The basic logical operations do not actually depend in any way on being compact Hausdorff. We do need to ensure that equality is only taken between variables ranging over Hausdorff locales and that we only take joins indexed by compact locales, but this will always be true in the cases we consider.
We will call these conventions, where we use the `closed orientation' for connectives, \emph{closed-type generalised presentations}.

Alternatively, we can avoid dealing with locally compact locales of generators and relations by considering equalisers involving \emph{products} of objects of the form $\Srpnsk^{G_\alpha}$ where each $G_\alpha$ is locally compact. Again, all of the constructions involving dual presentations are easily seen to generalise to this setting.
\end{remark}

\subsection{The canonical presentation of a compact Hausdorff locale}\label{sec:canonical_presentation}

A discrete locale $X$ can be presented by generators $[=x]$ for each $x \in X$ and relations $[=x] \wedge [=y] \vdash x = y$ for $x,y \in X$ and $\top \vdash \bigvee_{x \in X} [=x]$.
To understand why, note that the free frame on $X$ corresponds to the locale $\Srpnsk^X$ whose points are (open) subsets of $X$. The condition $[=x] \wedge [=y] \vdash x = y$ is satisfied by those subsets in which any two elements are equal, while the condition $\top \vdash \bigvee_{x \in X} [=x]$ forces the subsets to be inhabited. Thus, these relations together restrict us to precisely the singleton opens of $X$, which can be identified with the points of $X$ itself.

In summary, we have an equaliser diagram
\[X \hookrightarrow \Srpnsk^X \rightrightarrows \Srpnsk^{X \times X \sqcup 1}\]
which expresses the discrete locale $X$ in terms of itself.
Using dual presentations we can do something similar for compact Hausdorff locales.

\begin{lemma}\label{lem:canonical_prens}
    Let $X$ be a compact Hausdorff locale. Then $X$ admits a dual presentation with generators $[=x]$ for $x \in X$ and relations $[=x] \wedge [=y] \vdash x = y$ for $x,y \in X$ and $\top \vdash \bigvee_{x \in X} [=x]$.
    More explicitly, we have an equaliser diagram
    \[\begin{tikzcd}
	X & {\Srpnsk^X} &&& {\Srpnsk^{X \times X} \times \Srpnsk^1},
	\arrow[hook, ""{name=0, anchor=center, inner sep=0}, "i", from=1-1, to=1-2]
	\arrow[""{name=0, anchor=center, inner sep=0}, "{(\Srpnsk^{\pi_1} \vee \Srpnsk^{\pi_2},\, \bot \circ {!})}", shift left=2, from=1-2, to=1-5]
	\arrow[""{name=1, anchor=center, inner sep=0}, "{(d \circ {!},\, (\Srpnsk^{!})_* )}"', shift right=2, from=1-2, to=1-5]
    \arrow["\le"{marking, allow upside down}, draw=none, from=1, to=0]
\end{tikzcd}\]
    where $i\colon X \to \Srpnsk^X$ and $d\colon 1 \to \Srpnsk^{X \times X}$ are various exponential transposes of the map $1 \times X \times X \to \Srpnsk$ corresponding to the closed diagonal of $X$, while $(\Srpnsk^{!})_*$ denotes the right adjoint of $\Srpnsk^{!}\colon \Srpnsk^1 \to \Srpnsk^X$ and corresponds to taking images of the closeds. %
\end{lemma}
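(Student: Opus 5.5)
We will show that $i$ is the inserter of the displayed pair: $i$ lies in it, and every generalised point of $\Srpnsk^X$ satisfying both relations factors uniquely through $i$. Since generalised presentations are stable under change of base (\cref{rem:presentation_base_change}), and compact Hausdorff locales over $A$ correspond to internal compact Hausdorff locales of $\Sh(A)$, it suffices to argue with global points in an arbitrary (constructive) ambient topos. A point of $\Srpnsk^X$ is then a closed sublocale $F \subseteq X$, with $i(x)$ the closed sublocale $\{x\}$, which is closed because the diagonal is closed. In this language the two relations read:
\begin{enumerate}
    \item $F \times F \subseteq \Delta_X$, which is the first component, meaning the union of the two pullbacks of $F$ is contained in the diagonal;
    \item the image of $F$ under $X \to 1$ is all of $1$, i.e.\ $F$ is inhabited (equivalently $F \ne \emptyset$ as a closed, i.e.\ the complementary open is not $1$), which is the second component.
\end{enumerate}
Since $\Srpnsk^X \to \Srpnsk^{X\times X}\times\Srpnsk^1$ is built from exponentials, a map into the inserter is exactly such an $F$ (externally, a closed sublocale of $A\times X$ over $A$) with these properties.

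\textbf{Step 1: $i$ satisfies the relations.} The first holds because $\{x\}\times\{x\} = \{(x,x)\} \subseteq \Delta$. The second holds because the image of $\{x\}$ under $!$ is $1$, as $\{x\}\to 1$ is split. Both are verified on the generic point $\id_X$, i.e.\ for the closed diagonal $\Delta \subseteq X\times X$ viewed over $X$ via $\pi_1$.

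\textbf{Step 2: the converse.} Given $F$ closed with $F\times F\subseteq\Delta$ and $F$ inhabited, we want a unique point $x$ with $F=\{x\}$. Regard $F$ itself as a compact Hausdorff locale, being a closed sublocale of one. The first condition says the diagonal $F \to F\times F$ is an isomorphism, so $F$ is subterminal. The second says $F\to 1$ is a proper surjection. A proper surjective monomorphism is an isomorphism, since proper monos are closed sublocale inclusions and a surjective closed sublocale of $1$ is $1$. Hence $F\cong 1$, and the inclusion $1\cong F\hookrightarrow X$ gives the point $x$. Then $F=\{x\}$ because both are the image of $x$. Uniqueness holds because $i$ is monic: $\{x\}=\{y\}$ forces $(x,y)\in\{x\}\times\{y\}\subseteq\Delta$. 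To pass from global to generalised points, we rerun this argument over a base $A$ with proper separated maps $F\to A$, or simply invoke the internal-logic reduction above.

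\textbf{Main obstacle.} The delicate part is checking that the abstractly described maps, namely the pair $\Srpnsk^{\pi_1}\vee\Srpnsk^{\pi_2}$ versus $d\circ{!}$ and $\bot\circ{!}$ versus $(\Srpnsk^!)_*$, together with the reversed order, really encode the inclusions $F\times F\subseteq\Delta$ and ``$\exists x.\ x\in F$'' on generalised points. For the latter we would use that $(\Srpnsk^{!})_*$ is computed by the proper direct image $(\pi_2)_*$ for $X\times A\to A$, since $X$ is compact. A secondary point is that, constructively, ``$F$ inhabited'' must mean $F\to A$ is epi rather than merely nonempty; this is exactly what the existential-quantification (image) formulation supplies.
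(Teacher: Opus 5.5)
Your proposal is correct and follows essentially the paper's route: points of the inserter are closed sublocales $F$ with $F\to 1$ monic (from $F\times F\subseteq\Delta$) and a proper surjection (from the image condition), hence $F\cong 1$. Your appeal to ``proper monos are closed inclusions'' plays the role of the paper's ``mono plus regular epi in $\KHausLoc$ is iso''; one small slip is that the first relation says the \emph{intersection} $(F\times X)\cap(X\times F)=F\times F$ (not the union) lies in $\Delta$, since $\vee$ on $\Srpnsk$ induces intersection of closeds --- your formula is right, only the verbal gloss is wrong.
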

\begin{proof}
    It suffices to show that the generalised points of the equaliser agree with those of $X$.
    The points of the equaliser are the closed sublocales $S$ of $X$ that make two inequalities hold.

    The first inequality asks that $S \times X \cap X \times S \subseteq \Delta$ where $\Delta$ is the diagonal of $X$. Using the internal logic, it is now easy to see that ${!}\colon S \to 1$ is monic. Alternatively and more explicitly, consider the commutative square
    \[\begin{tikzcd}
	S & {S \times S} \\
	X & {X \times X}.
	\arrow["\Delta"', from=1-1, to=1-2]
	\arrow["s"', hook', from=1-1, to=2-1]
	\arrow["{\pi_i}"', shift right=2, from=1-2, to=1-1]
	\arrow["k"{pos=0.4}, dashed, from=1-2, to=2-1]
	\arrow["{s \times s}", hook', from=1-2, to=2-2]
	\arrow["\Delta"', from=2-1, to=2-2]
    \end{tikzcd}\]
    By hypothesis, $s \times s$ factors through $\Delta\colon X \to X \times X$ to give a monomorphism $k$. Then $\Delta k \Delta = (s \times s) \Delta = \Delta s$ and so the upper triangle commutes. For both projections $\pi_{1,2}\colon S \times S \to S$ we have $\pi_i \Delta = \id_S$. We now show that $\Delta \pi_i = \id_{S \times S}$. Consider $k \Delta \pi_i = s \pi_i = \pi_i (s \times s) = \pi_i \Delta k = k$ and thus $\Delta \pi_i = \id_S$. Hence, the square
    \[\begin{tikzcd}
	S & S \\
	S & 1
	\arrow[equals, from=1-1, to=1-2]
	\arrow[equals, from=1-1, to=2-1]
	\arrow["\lrcorner"{anchor=center, pos=0.125}, draw=none, from=1-1, to=2-2]
	\arrow["{{!}}", from=1-2, to=2-2]
	\arrow["{{!}}"', from=2-1, to=2-2]
    \end{tikzcd}\]
    is a pullback and thus ${!}\colon S \to 1$ is monic.
    
    The second inequality asks that the image of $S$ under ${!}\colon S \to 1$ is the whole of $1$. This is to say that $S \to 1$ is a regular epimorphism in $\KHausLoc$.
    
    Putting these together we have that ${!}\colon S \to 1$ is an isomorphism and so $S$ is a singleton. Such sublocales are patently in bijection with the points of $X$ via the map $i\colon X \to \Srpnsk^X$.
\end{proof}
\begin{remark}
Recall that $\Srpnsk^X$, without any imposed relations, classifies open/closed sublocales of $X$.  Recall also that, tautologically, any locale classifies the theory of its (generalised) points.  Thus, \cref{lem:canonical_prens} can be understood as expressing that, when $X$ is a compact Hausdorff locale, the theory of points of $X$ is equivalent to the theory of closed singletons of $X$.
\end{remark}
\subsection{Presentations indexed by subquotients}
There are situations, such as in the proof of \cref{prop:each-generic-sort-is-LH}, where we wish to introduce `dummy' generators into a dual presentation. We explain in the following lemma how to make the locales of generators or relations of a standard or dual presentation `larger' without changing the presented frame.
\begin{lemma}\label{lem:expanded_presentation}
    Let $G_{0}, G_1, G_2$ and $R_0, R_1, R_2$ all be discrete locales or all be compact Hausdorff locales and suppose we have
    \[G_0 \twoheadleftarrow G_1 \hookrightarrow G_2 \qquad \text{and} \qquad R_0 \twoheadleftarrow R_1 \hookrightarrow R_2.\]
    Let \[X \hookrightarrow \Srpnsk^{G_0} \rightrightarrows \Srpnsk^{R_0}\]
    be a (generalised) presentation.
    Then there exists a (generalised) presentation of $X$ with generators $G_2$ and relations $R_2 \sqcup G_2^2 \sqcup G_2$.
\end{lemma}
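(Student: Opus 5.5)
The plan is to present $X$ as the locale of a suitably constrained subset of the generator set, using $G_2$ as the new generator locale. Write $q\colon G_1 \twoheadrightarrow G_0$ for the surjection and $j\colon G_1 \hookrightarrow G_2$ for the inclusion. In the discrete case $j$ is a complemented (clopen) inclusion. In the compact Hausdorff case it is a closed inclusion, and $q$ is a proper surjection.

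\textbf{Step 1: generators.} A point of $\Srpnsk^{G_2}$ is an (open or closed, according to orientation) subset $F \subseteq G_2$. We impose relations, indexed by the summand $G_2$, forcing $F$ to lie inside $G_1$. For each $g \in G_2$ we require $[g \in F] \le \llbracket g \in G_1 \rrbracket$, the latter being the constant truth value of membership in the subobject $G_1$. Next we impose relations, indexed by the summand $G_2^2$, forcing $F$ to be saturated for the kernel of $q$. For $g,g' \in G_2$ we require $[g \in F] \wedge \llbracket g,g' \in G_1,\ q(g)=q(g')\rrbracket \le [g' \in F]$.

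These relations are legitimate in the appropriate orientation. In the discrete case the conditions are decidable and the kernel of $q$ is clopen. In the compact Hausdorff case $G_1$ is closed in $G_2$, and the kernel of $q$ is closed in $G_1^2$ (since $G_0$ is Hausdorff), hence closed in $G_2^2$. So each side is a closed of $\Srpnsk^{G_2} \times G_2^{(2)}$ built from the available connectives. I would then show that the resulting sublocale is isomorphic to $\Srpnsk^{G_0}$ via $F \mapsto q(F)$ and $E \mapsto q^{-1}(E)$. Restriction along $j$ identifies subsets of $G_2$ contained in $G_1$ with subsets of $G_1$. Taking preimage along the effective descent surjection $q$ identifies subsets of $G_0$ with $q$-saturated subsets of $G_1$. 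Concretely, $\Srpnsk^{q}\colon \Srpnsk^{G_0} \to \Srpnsk^{G_1}$ is the equaliser of the two maps $\Srpnsk^{G_1} \rightrightarrows \Srpnsk^{\ker q}$, by descent for closed (respectively open) sublocales along proper (respectively open) surjections.

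\textbf{Step 2: relations.} Precompose the original relation maps $\Srpnsk^{G_0} \rightrightarrows \Srpnsk^{R_0}$ with $\Srpnsk^{R_1} \leftarrow \Srpnsk^{R_0}$, using that $\Srpnsk^{r}$ for the surjection $r\colon R_1 \to R_0$ is monic, so the equaliser or inserter is unchanged. Then extend from $R_1$ to $R_2$ by making the relation trivially true off $R_1$. Concretely, we take the relation to be $\phi \wedge \llbracket \rho \in R_1 \rrbracket \le \psi$, which is an equivalent family because $\Srpnsk^{R_2} \to \Srpnsk^{R_1}$ restricts. Transport these along the isomorphism of Step 1 to obtain maps $\Srpnsk^{G_2} \rightrightarrows \Srpnsk^{R_2}$. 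Combining all relations via the coproduct $R_2 \sqcup G_2^2 \sqcup G_2$ gives a single presentation whose equaliser is $X$. This uses that iterated equalisers and inserters compose.

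\textbf{Main obstacle.} The hardest part is Step 1 in the compact Hausdorff case: verifying that the $q$-saturated closed subsets of $G_1$ are exactly the preimages of closed subsets of $G_0$, internally and for generalised points. I would argue this via the proper surjection $q$. The image $q_*$ is left inverse to preimage on saturated closeds, by Frobenius reciprocity together with the fact that the closed kernel pair is computed by $q^*q_*$ (the Beck--Chevalley condition for proper maps). Because all steps are stable under base change (cf.\ \cref{rem:presentation_base_change}), checking on generalised points suffices.
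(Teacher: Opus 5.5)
Your overall strategy is the paper's. Your $G_2$- and $G_2^2$-indexed guarded relations are what the paper obtains by combining two equalisers: first $\Srpnsk^{G_0} \xrightarrow{\Srpnsk^q} \Srpnsk^{G_1} \rightrightarrows \Srpnsk^{E}$, pushed into $\Srpnsk^{G_2^2}$ by extension by zero, and then the equaliser of $\id$ and the idempotent $(\Srpnsk^j)_!\Srpnsk^j$, resp.\ $(\Srpnsk^j)_*\Srpnsk^j$. Your handling of $R_0 \twoheadleftarrow R_1 \hookrightarrow R_2$ is the paper's chain of monomorphisms $\Srpnsk^{R_0} \hookrightarrow \Srpnsk^{R_1} \hookrightarrow \Srpnsk^{R_2}$.

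\textbf{Gap.} The step ``transport these along the isomorphism of Step 1 to obtain maps $\Srpnsk^{G_2} \rightrightarrows \Srpnsk^{R_2}$'' does not work as stated. Your isomorphism is between $\Srpnsk^{G_0}$ and a \emph{sublocale} $S \subseteq \Srpnsk^{G_2}$, so transporting only yields maps $S \rightrightarrows \Srpnsk^{R_2}$. A presentation with generators $G_2$ needs maps defined on all of $\Srpnsk^{G_2}$. The missing ingredient, which is the crux of the paper's proof, is a retraction $\Srpnsk^{G_2} \to \Srpnsk^{G_0}$ of the embedding $\Srpnsk^{G_0} \hookrightarrow \Srpnsk^{G_2}$. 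It is $F \mapsto q(F \cap G_1)$, i.e.\ $(\Srpnsk^q)_! \circ \Srpnsk^j$ in the discrete case and $(\Srpnsk^q)_* \circ \Srpnsk^j$ in the compact Hausdorff case.
\begin{itemize}
\item It is an honest locale morphism, not just a function on global points, because $q$ is an open (resp.\ proper) surjection. Hence images along $Y \times q$ are open (resp.\ closed) and stable under change of $Y$.
\item It is a left inverse of the embedding by surjectivity of $q$.
\item Composing the original relation maps with it gives maps out of $\Srpnsk^{G_2}$. The joint equaliser is then $X$ precisely because the retraction restricts on $S$ to the inverse of your isomorphism.
\end{itemize}
Your formula $F \mapsto q(F)$ is the right map, but you must say it is defined on the whole of $\Srpnsk^{G_2}$ and why it is continuous. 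Alternatively, you could extend from $S$ using that every open of the sublocale $S \times R_2$ extends to an open of $\Srpnsk^{G_2} \times R_2$. Either way, some extension argument has to be given.

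\textbf{Constructive error.} Your claims that in the discrete case $j$ is complemented and $\ker q$ is clopen are false constructively. A subobject of a discrete locale is only an open sublocale, and $\ker q$ is closed in $G_1^2$ only if $G_0$ has decidable equality. This matters because the lemma is applied internally in $\Sh(\ReprG^J(\theory)_0)$ in \cref{prop:each-generic-sort-is-LH}. There $G_1 = \{p \mid p \sim p\} \subseteq P^J$, which is not decidable. In the open orientation your guarded relations only need $G_1$ and $\ker q$ to be open, which they are, so Step 1 survives. For the same reason, when passing from $R_1$ to $R_2$ you must extend both sides of each relation by zero, via $(\Srpnsk^{j_R})_!$ (resp.\ $(\Srpnsk^{j_R})_*$) for $j_R \colon R_1 \hookrightarrow R_2$, as the paper does. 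An arbitrary extension over a complement is not available; your guard $\llbracket \rho \in R_1 \rrbracket$ should be read as this extension by zero.

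With these two repairs your argument becomes the paper's proof.
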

    Before embarking on the proof, we outline the core ideas by describing what happens in the simple case where $G_0, G_1, G_2$ are discrete locales and $R_0, R_1, R_2$ are all empty, i.e.\ we wish to introduce `dummy' variables into a free frame presentation. In this case, it is easy to construct a frame presentation of the free frame $X = \langle G_0 \rangle$ with generators $G_2$ and relations $G_2^2 \sqcup G_2$ --- by moving from $G_0$ to $G_1$, we are duplicating the generators in $G_0$ (according to the surjection $q \colon G_1 \twoheadrightarrow G_0$) which we then desire to equate in the presentation, and by moving from $G_1$ to $G_2$ we are adding new generators that we want to annihilate.  Such desiderata are handled by imposing on the set of generators $G_2$ the relations
    \[
    \begin{array}{r@{\hspace{1ex}}c@{\hspace{1ex}}l@{\quad}@{}l@{}}
     \llbracket q(g) = q(g') \rrbracket \land g & \vdash & g' & \text{ for all $g, g' \in G_2$}, \\
     g & \vdash & \llbracket g \in G_1 \rrbracket & \text{ for all $g \in G_2$},
    \end{array}
    \]
    where $\llbracket g \in G_1 \rrbracket$ and $\llbracket q(g) = q(g') \rrbracket$ are interpreted as the truth value of the statement inside the brackets, e.g.\ if $g \in G_2$ is contained in the subset $G_1 \subseteq G_2$, then $\llbracket g \in G_1 \rrbracket = \top$, and if not, then $\llbracket g \in G_1 \rrbracket = \bot$.
    Intuitively, the first relation (if also applied a second time with $g$ and $g'$ exchanged) makes generators identified by $q$ equal to each other, while the second one `zeros out' the generators not included in $G_1$.
\begin{proof}[Proof of \cref{lem:expanded_presentation}]
    Let $E$ be the kernel equivalence relation of $q\colon G_1 \twoheadrightarrow G_0$ so that $E \rightrightarrows G_1 \xrightarrow{q} G_0$ is a coequaliser. Applying $\Srpnsk^{(-)}$ we arrive at the equaliser diagram \[\Srpnsk^{G_0} \xrightarrow{\Srpnsk^q} \Srpnsk^{G_1} \rightrightarrows \Srpnsk^{E}.\]

    Now note that $\Srpnsk^q$ has a retraction: if $G_0$ and $G_1$ are discrete then $q$ is an open surjection and so $\Srpnsk^q$ has a left adjoint left inverse $(\Srpnsk^q)_!$; if $G_0$ and $G_1$ are compact Hausdorff then $q$ is a proper surjection and so $\Srpnsk^q$ has a right adjoint left inverse $(\Srpnsk^q)_*$. We can use this to combine the above equaliser with the original presentation for $X$ to obtain an equaliser
    \[X \hookrightarrow \Srpnsk^{G_1} \rightrightarrows \Srpnsk^{R_0} \times \Srpnsk^E \cong \Srpnsk^{R_0 \sqcup E},\]
    where the maps from $\Srpnsk^{G_1}$ to $\Srpnsk^{R_0}$ come from the composite of the splitting $\Srpnsk^{G_1} \to \Srpnsk^{G_0}$ and the maps $\Srpnsk^{G_0} \rightrightarrows \Srpnsk^{R_0}$ in the original equaliser.

    Now as before, if $G_1$ and $G_2$ are discrete, the inclusion $i\colon G_1 \hookrightarrow G_2$ is open and so the quotient map $\Srpnsk^i$ has a left adjoint $(\Srpnsk^i)_!\colon \Srpnsk^{G_1} \hookrightarrow \Srpnsk^{G_2}$. This left adjoint is, in fact, the equaliser of the identity $\id \colon \Srpnsk^{G_2} \to \Srpnsk^{G_2}$ and the map $(\Srpnsk^{i})_!\Srpnsk^{i}$, which takes in the intersection of a given open with the open sublocale $G_1$.  Similarly, if $G_1$ and $G_2$ are compact Hausdorff, we have a right adjoint $(\Srpnsk^i)_*\colon \Srpnsk^{G_1} \hookrightarrow \Srpnsk^{G_2}$, which is the equaliser of the identity $\id \colon \Srpnsk^{G_2} \to \Srpnsk^{G_2}$ and the map $(\Srpnsk^{i})_*\Srpnsk^{i}$, which takes the intersection of a given closed with the closed sublocale $G_1$.

    Again we can combine this with the previous equaliser (using the retraction $\Srpnsk^i$) to give an equaliser
    \[X \hookrightarrow \Srpnsk^{G_2} \rightrightarrows \Srpnsk^{R_0 \sqcup E} \times \Srpnsk^{G_2} \cong \Srpnsk^{R_0 \sqcup E \sqcup G_2}.\]

    Now we can apply a similar argument to the relations to find monomorphisms $\Srpnsk^{R_0} \hookrightarrow \Srpnsk^{R_1}$ and $\Srpnsk^{R_1} \hookrightarrow \Srpnsk^{R_2}$. Moreover, the open/closed inclusion $E \hookrightarrow G_1 \times G_1 \hookrightarrow G_2 \times G_2$ yields a monomorphism $\Srpnsk^{E} \hookrightarrow \Srpnsk^{G_2 \times G_2}$. Putting these together we obtain a monomorphism
    $\Srpnsk^{R_0 \sqcup E \sqcup G_2} \hookrightarrow \Srpnsk^{R_2 \sqcup G_2^2 \sqcup G_2}$.

    Finally, since adding a monomorphism to the end of an equaliser diagram again gives an equaliser diagram we arrive at the generalised presentation
    \[X \hookrightarrow \Srpnsk^{G_2} \rightrightarrows \Srpnsk^{R_2 \sqcup G_2^2 \sqcup G_2},\]
    completing the proof.
\end{proof}

\section{Forcing with locales}\label{sec:forcing}

In this section, we state two local-theoretic `forcing' results we will need later in this paper.  The first expresses that, up to a forcing extension, `every set is subcountable', and employs the familiar locale of partial surjections between two sets (cf.\ \cref{ex:partial_surjs_KHaus} or \cite[Example C1.2.8]{Elephant}).  The second expresses that, up to a forcing extension, `every compact Hausdorff space is a subquotient of the Cantor space'.

Our explicit results in this section state that certain locales of maps are nontrivial in a strong sense, but they can be understood as positing the existence of such maps after passing to certain (localic) toposes, and these toposes can be interpreted as examples of set theorists' forcing extensions. For further discussion and examples of forcing in this fashion, the reader is directed to \cite{scedrov_forcing}.

The locale theorist will understand our constructions as showing (constructively) that a certain presented locale has a well-behaved surjection to 1 and hence that the corresponding fibrewise construction over a base is also a well-behaved surjection, namely an effective descent morphism.

The set theorist can think of our desired subquotients as being generic objects in a forcing extension. The notions of forcing we consider in this paper are very `tame' in that, apart from forcing our condition to be satisfied, we impose no constraints on whether other parts of the structure are preserved, aside from asking that things do not trivialise completely (corresponding to the morphism to 1 being a well-behaved surjection). As a result, we can be remarkably carefree with the instances of \emph{iterated} forcing we will encounter in \cref{thm:alexandroff_hausdorff}, as we are ensured of a non-trivialising forcing extension by the fact that effective descent morphisms compose.

\subsection{Every set is subcountable}

The first example of a forcing extension we will use is the familiar scenario where we wish to `force' the cardinality of a given set to be smaller (but where we do not care if other cardinalities collapse into each other).
\begin{example}[Partial surjections from $\N$ to $X$]\label{ex:partial_surjections_from_N}
Fix a set $X$ and consider the following geometric theory of partial surjections from $\N$ to $X$. There is an atomic proposition $[f(n) = x]$ for each $n \in \N$ and $x \in X$, which we interpret to mean that the partial function maps $n$ to $x$. The axioms are as follows (where $\{\top \mid x = y\}$ is understood as a subset of the singleton $\{\top\}$)\footnote{We use this strange-looking construction so the definition is constructively valid, since we will need to use it internal to a topos. If we could assume the law of the excluded middle, we either have $x = y$, in which case the join is $\top$ and the axiom is superfluous, or $x \ne y$ and the right-hand side of the axiom can be replaced with $\bot$.}.
\begin{displaymath}
\begin{array}{r@{\hspace{1ex}}c@{\hspace{1ex}}l@{\quad}@{}l@{\quad}r@{}}
  [f(n) = x] \land [f(n) = y] &\vdash& \bigvee \{\top \mid x = y\} & \text{for $n \in \N$, $x,y \in X$} & \text{(functionality)} \\
   &\vdash& \bigvee_{n \in \N} [f(n) = x] & \text{for $x \in X$} & \text{(surjectivity)}
\end{array}
\end{displaymath}
We denote the resulting classifying locale by $[\N \paronto X]$.
\end{example}
\begin{remark}\label{rem:inserter_diagram_for_NtoX}
    Note that, via the discussion in \cref{sec:gen_prens}, the locale $[\N \paronto X]$ satisfies the universal property of the inserter in the diagram
    \[
    [\N \paronto X] \hookrightarrow \Srpnsk^{X \times \N} \rightrightarrows \Srpnsk^{X \times X \times \N \sqcup X} 
    \]
    where the two maps $\Srpnsk^{X \times \N} \rightrightarrows \Srpnsk^{X \times X \times \N \sqcup X}$ are precisely analogous to those constructed in \cref{ex:partial_surjs_KHaus}.
\end{remark}
Thus, the locale $[\N \paronto X]$ has a basis consisting of those formal meets of generators $[f(n_1) = x_1] \land \dots \land [f(n_\ell) = x_\ell]$ that encode a finite partial function $f \colon \N \rightharpoondown X$.  In other words, a basic generator is a finite approximation of the partial surjection $f \colon \N \paronto X$ we are forcing, as is often the case in poset-theoretic forcing (cf.\ \cite[p.\ 267]{SGL}).

If the set $X$ is countable, the locale $[\N \paronto X]$ is not so strange: it is (classically) a spatial locale whose points are the partial surjections from $\N$ to $X$. However, if $X$ is uncountable, then there are no partial surjections from $\N$ to $X$ and thus the locale has no global points. 

Now consider the topos $\Sh([\N \paronto X])$ of sheaves on this locale.  The sets $X$ and $\N$ are lifted to this topos by applying the inverse image of the canonical geometric morphism $\gamma \colon \Sh([\N \paronto X]) \to \Set$, or equivalently, by taking the local homeomorphisms of locales $X \times [\N \paronto X] \to [\N \paronto X]$ and $\N \times [\N \paronto X] \to [\N \paronto X]$ given by the product projections. We write $\underline{X}$, $\underline{\N}$ to denote the respective liftings. (The latter is, of course, the internalisation of the naturals in $\Sh([\N \paronto X])$). Internal to the topos $\Sh([\N \paronto X])$, there is now an internal partial surjection $f \colon \underline{\N} \paronto \underline{X}$, even in the case where $X$ was originally uncountable.  In other words, we have `forced' $X$ to become \emph{subcountable}.

Explicitly, the graph of $f$ is given by the subobject of $\underline{X} \times \underline{\N} \in \Sh([\N \paronto X]) $, i.e.\ the open sublocale of $ X \times \N \times [\N \paronto X]$, obtained by adding the relation
\[
\top \vdash \bigvee_{x \in X, n \in \N} [=n] \land [=x] \land [f(n)=x]
\]
to the presentation.

We now observe that the locale $[\N \paronto X]$ is nontrivial, in the sense that the unique map ${!}\colon [\N \paronto X] \to 1$ is of effective descent. It is shown in \cite[Example C3.3.11(c)]{Elephant} that $[\N \paronto X]$ is connected and locally connected. The following proposition follows (see \cite[Corollary C3.3.2(ii)]{Elephant}).
\begin{proposition}\label{prop:NtoX_is_opensurj}
    The unique morphism ${!} \colon [\N \paronto X] \to 1$ is an open surjection, and hence of effective descent.
\end{proposition}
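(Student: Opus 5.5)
The plan is to prove this directly from the frame presentation rather than only citing the Elephant, so that the argument is visibly constructive and internalisable. A map ${!}\colon L \to 1$ is open precisely when $L$ is \emph{overt}, meaning there is a positivity predicate $\mathrm{Pos}$ on $\O L$ such that $a \le \bigvee\{a \wedge b \mid b \in B,\ \mathrm{Pos}(b)\}$ for all $a$, with $B$ a basis. It is a surjection precisely when $\mathrm{Pos}(1)$ holds, i.e.\ $1 = 0$ is impossible. Open surjections of locales are of effective descent by Joyal--Tierney, as recalled in the example on descent, so the second clause follows once the first is established.

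For the basis $B$ I take the finite partial functions $\sigma\colon \N \rightharpoondown X$, meaning finite sets of pairs $(n,x)$ with $x=y$ whenever $(n,x),(n,y)\in\sigma$. Each $\sigma$ corresponds to the basic open $\bigwedge_{(n,x)\in\sigma}[f(n)=x]$, as already noted after \cref{rem:inserter_diagram_for_NtoX}. I would present $\O[\N \paronto X]$ as a coverage on this meet-semilattice of conditions, ordered by reverse inclusion. Functionality is built in by taking only functional $\sigma$. Surjectivity becomes the covering family $\{\sigma \cup \{(n,x)\}\}_{n \notin \mathrm{dom}\,\sigma}$ of $\sigma$ for each $x \in X$. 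Because $\mathrm{dom}\,\sigma$ is a finite subset of $\N$, membership in it is decidable, so this family is inhabited constructively.

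I will declare every condition positive. The key step is to verify that the coverage is then \emph{open}: every covering family of a positive element contains a positive element. In the standard argument this suffices for the presented frame to be overt, with $\mathrm{Pos}$ extending to joins, and the covering families are always inhabited because $\N \setminus \mathrm{dom}\,\sigma$ is inhabited. I would also check stability of the covers under meets, since the meet of $\sigma$ with a compatible $\tau$ is $\sigma\cup\tau$. The family for $\sigma\cup\tau$ is still inhabited for the same reason, and incompatible meets give $0$, which must be covered trivially.

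The main obstacle is handling these incompatible meets constructively. The condition $\sigma \cup \tau$ fails to be functional only when some $x \ne y$ share an $n$. Without decidable equality on $X$, I cannot decide whether $\sigma\cup\tau$ is a condition at all. I would first try the subsingleton trick from the footnote: let $\sigma \wedge \tau$ be the join of $\{\sigma\cup\tau \mid \sigma\cup\tau \text{ functional}\}$, which is a subsingleton, and define positivity of the downset it generates as inhabitation. Openness of the coverage then still holds, because positivity is only ever asserted of genuine conditions.

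Finally, $\mathrm{Pos}(1)$ holds because the empty partial function is a condition. This gives an open surjection, and hence a map of effective descent.
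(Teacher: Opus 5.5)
There is a genuine gap in how you encode the surjectivity axiom. In $\O[\N \paronto X]$ that axiom, together with functionality and the decidability of $\mathrm{dom}\,\sigma$, gives
\[\sigma = \bigvee_{n \in \N} \sigma \wedge [f(n)=x] = \bigvee_{n \notin \mathrm{dom}\,\sigma} \bigl(\sigma \cup \{(n,x)\}\bigr) \vee \bigvee\{\sigma \mid x \in \mathrm{im}\,\sigma\}.\]
Your covering family drops the second term, i.e.\ the case where $x$ is already hit by some $n \in \mathrm{dom}\,\sigma$. Covering $\sigma$ only by the fresh extensions $\sigma \cup \{(n,x)\}$ with $n \notin \mathrm{dom}\,\sigma$, and iterating, forces every $x$ to have infinitely many preimages. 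So your site presents the locale of partial maps $\N \rightharpoondown X$ with infinite fibres, which is a proper sublocale of $[\N \paronto X]$. Concretely, take $X = \{x\}$. The partial surjection with domain $\{0\}$ is a point of $[\N \paronto X]$ lying in $[f(0)=x]$ but not in $\bigvee_{n \neq 0} [f(0)=x] \wedge [f(n)=x]$, so your cover relation fails in $\O[\N \paronto X]$. Surjectivity of ${!}$ would pass from a sublocale up to $[\N \paronto X]$, but openness does not. As written, you therefore prove overtness of the wrong locale. The step you never carried out is checking that your coverage presents the same frame as the presentation in \cref{ex:partial_surjections_from_N}, and that is exactly where this breaks.

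The repair is cheap. Let the covering family of $\sigma$ at $x$ be
\[\{\sigma \cup \{(n,x)\} \mid n \notin \mathrm{dom}\,\sigma\} \cup \{\sigma \mid x \in \mathrm{im}\,\sigma\},\]
whose second part is a subsingleton that need not be decidably inhabited. Splitting $\N$ into $\mathrm{dom}\,\sigma$ and its complement, one checks that this coverage is stable and presents $\O[\N \paronto X]$. Each family is still inhabited by a fresh extension, so your positivity argument goes through unchanged (the cover-closed downset generated by a downset $D$ is inhabited only if $D$ is).

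With this fix your route genuinely differs from the paper's. The paper just quotes \cite[Example C3.3.11(c)]{Elephant} that $[\N \paronto X]$ is connected and locally connected, and applies \cite[Corollary C3.3.2(ii)]{Elephant}. Yours is self-contained and visibly constructive, proves only the open-surjection property actually needed, and yields directly the explicit formula for ${!}_!$ stated after the proposition.

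One minor point: intuitionistically $\mathrm{Pos}(1)$ is strictly stronger than $\neg(1=0)$, so your ``i.e.'' is inaccurate. This is harmless, since you verify $\mathrm{Pos}(1)$ directly.
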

Explicitly, the left adjoint to ${!}^*$ acts by sending a formal meet of generators $[f(n_1) = x_1] \land \dots \land [f(n_\ell) = x_\ell]$ to $1 \in \O 1$ if and only if it encodes a finite partial function $f \colon \N \rightharpoondown X$.
\begin{remark}
    Although it may seem more natural to `force' an uncountable set $X$ to be a quotient of $\N$, rather than a subquotient, for our purposes it is simpler to consider the latter: there will be instances later on where we wish to treat empty sets and inhabited sets uniformly, and for that reason it is preferable to consider subquotients.  This is especially important since, when working internally to a topos that does not satisfy the law of excluded middle, `internal sets' cannot be cleanly divided into those that empty and those that are inhabited.
\end{remark}
Our decision to express \cref{ex:partial_surjections_from_N} in an entirely constructive manner is now rewarded by the fact that the locale of partial surjections from the naturals to a set can be \emph{internalised} in any topos.  In particular, a local homeomorphism of locales $X \to H$ yields an object in the topos $\Sh(H)$, i.e.\ an internal `set', and thus there exists an \emph{internal locale} $[\N \paronto_H X]$ of $\Sh(H)$ of partial surjections from the internalisation of the naturals $\underline{\N} \in \Sh(H)$ to the object $X \in \Sh(H)$.

Recall that the internal locales of $\Sh(H)$ are just the locales over $H$, i.e.\ objects of the slice category $\Loc/H$, and so the internal locale $[\N \paronto_H X]$ corresponds to an external locale, which we also denote by $[\N \paronto_H X]$, together with a morphism $[\N \paronto_H X] \to H$.  From the above observations, and the fact that \cref{prop:NtoX_is_opensurj} internalises, we obtain the following result, which will become instrumental in \cref{sec:univ_prop}.
\begin{proposition}\label{prop:every_set_is_subcount}
    For any local homeomorphism of locales $p \colon X \to H$, there exists a locale $[\N \paronto_H X]$ and an open surjection $[\N \paronto_H X] \twoheadrightarrow H$ such that the (generalised) points of $[\N \paronto_H X]$ are in bijection with pairs of a point $h$ of $H$ and a partial surjection $\N \paronto p^{-1}(h)$ onto the corresponding fibre $p^{-1}(h)$, i.e.\ the pullback locale
    \[
    \begin{tikzcd}
        p^{-1}(h) \ar{r} \ar{d} & X \ar{d}{p} \\
        U \ar{r}{h}  & H.
        \arrow["\lrcorner"{anchor=center, pos=0.125}, draw=none, from=1-1, to=2-2]
    \end{tikzcd}
    \]
\end{proposition}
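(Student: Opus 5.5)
We obtain \cref{prop:every_set_is_subcount} by internalising \cref{ex:partial_surjections_from_N} and \cref{prop:NtoX_is_opensurj} in the topos $\Sh(H)$. The local homeomorphism $p\colon X \to H$ is an object of $\Sh(H)$, i.e.\ an internal set. Working in the internal logic of $\Sh(H)$, we write down the same geometric theory of partial surjections $\underline{\N} \paronto X$. Its axioms use the construction $\bigvee\{\top \mid x = y\}$ rather than a case split, so the theory makes sense without excluded middle. Its classifying internal locale is given by the internal frame presentation (equivalently the inserter of \cref{rem:inserter_diagram_for_NtoX}, formed internally). Under the equivalence between internal locales of $\Sh(H)$ and $\Loc/H$ \cite[Theorem C1.6.3]{Elephant}, this internal locale corresponds to an external locale $[\N \paronto_H X]$ with a map $[\N \paronto_H X] \to H$.

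\textbf{Description of the points.} A generalised point $U \to [\N \paronto_H X]$ is the same as a point $h\colon U \to H$ together with a lift over $H$. Because classifying locales are stable under change of base, pulling back along $h$ identifies such lifts with global points of the internal locale $h^*[\N \paronto_H X]$ in $\Sh(U)$. That internal locale classifies partial surjections $\underline{\N} \paronto h^* X$, and $h^*X$ is exactly the local homeomorphism $p^{-1}(h) \to U$. Hence these lifts are precisely the internal partial surjections from $\underline{\N}$ to the fibre $p^{-1}(h)$ in $\Sh(U)$, giving the claimed bijection. The base-change step needs some justification. One can cite the standard fact that geometric theories' classifying locales are preserved by pullback along geometric morphisms, or argue directly: the presentation is an inserter of exponentials $\Srpnsk^{(-)}$ of discrete (hence locally compact) internal locales, and pullback preserves finite limits and such exponentials, as in \cref{rem:presentation_base_change}.

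\textbf{Open surjection.} We need the proof of \cref{prop:NtoX_is_opensurj} to be constructive, so that it holds internally in $\Sh(H)$ and shows that $[\N \paronto_H X] \to 1$ is an internal open surjection. Externally, this means $[\N \paronto_H X] \to H$ is an open surjection. This is the main obstacle, since the cited connectedness and local connectedness \cite[Example C3.3.11(c)]{Elephant} must be checked to be constructive. Rather than rely on that, I would verify openness directly. Take the basis of finite formal meets $[f(n_1)=x_1]\wedge\dots\wedge[f(n_\ell)=x_\ell]$ and define $!_!$ of such a meet to be the truth value that it encodes a finite partial function, namely $\bigwedge_{n_i=n_j}\llbracket x_i = x_j\rrbracket$. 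Three checks remain.

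First, $!_!$ respects the relations. Functionality is immediate. For surjectivity, given a consistent finite partial function and any $x$, we choose a fresh $n$ outside the finite domain (always possible, since $\N$ is infinite) and extend by $n \mapsto x$; this shows the cover of a basic element by extensions is preserved.

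Second, Frobenius reciprocity holds, which is a routine check on basic opens.

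Third, $!_!(1)=\top$, which gives surjectivity.

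Since open surjections are effective descent, the result follows.
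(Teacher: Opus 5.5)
Your proposal is correct and follows the paper's argument: present $[\N \paronto X]$ constructively, interpret it in $\Sh(H)$, transport along the equivalence between internal locales of $\Sh(H)$ and $\Loc/H$, and use the fact that \cref{prop:NtoX_is_opensurj} holds internally. Where the paper cites connectedness and local connectedness, you construct ${!}_!$ directly on basic opens, which matches the explicit description the paper gives right after \cref{prop:NtoX_is_opensurj}; this is a good self-contained substitute that makes the required constructivity evident, but add the unit inequality $a \le {!}^*{!}_!(a)$ for basic $a$ to your list of checks, since the adjunction needs it (it follows at once from the functionality axiom).
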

\begin{remark}
    The locale $[\N \paronto_H X]$ can be presented in a similar fashion as in \cref{rem:inserter_diagram_for_NtoX}, where we take exponents in $\Loc/H$ of the \emph{lifting} of the Sierpi\'nski space, i.e.\ the product projection $H \times \Srpnsk \to H$; e.g.\ the locale of generators becomes $\underline{\Srpnsk}^{X \times \underline{\N}} \cong ({\underline{\Srpnsk}}^{\underline{\N}})^X \cong (H \times \Srpnsk^\N \to H)^X$, where the locale $(H \times \Srpnsk^\N \to H)^X$ denotes the exponent in $\Loc/H$, which exists since, as $X \to H$ is a local homeomorphism, it is exponentiable.
\end{remark}

\subsection{Every compact Hausdorff space is a subquotient of Cantor space}

Just as there is a sense that every set is a quotient of $\N$, we will show in this section that `every compact Hausdorff locale is a subquotient of Cantor space'. Note that if we restrict to countably generated frames, then this is strictly true --- the Alexandroff--Hausdorff theorem states that every non-empty countably-based compact Hausdorff space is the image of a map whose domain is the Cantor space. A classical proof of this in the pointfree setting has been given in \cite{avila2022cantor}. We will provide a constructive result that is true without cardinality assumptions.

This proof uses a fairly substantial amount of pointfree topology. Let us briefly describe the concepts involved with citations in case the reader is interested in learning more about them.

Firstly, we have the notion of \emph{complete regularity}. This is the pointfree incarnation of the familiar separation axiom from topology indicating there are sufficiently many maps to $\R$ to separate points. It is usually defined in terms of the \emph{completely below} relation $\pprec$ where $u \pprec v$ in $\O X$ indicates that there is a locale map $f\colon X \to [0,1]$ such that $u \le f^*((\tfrac{1}{2},1]) \le f^* ((0,1]) \le v$. The locale $X$ is completely regular if every open is the join of those completely below it. More details can be found in \cite[Chapters V and XIV]{PicadoPultr}.

Next we use the notions of \emph{density}, \emph{strong density} and \emph{positivity}. Dense locale maps are analogous to dense continuous maps of topological spaces. A locale map $f\colon X \to Y$ is dense if $f^*(u) = 0$ entails $u = 0$. Strong density is classically equivalent, but asks that $u > 0$ entails $f^*(u) > 0$, where $u > 0$ means that $u$ is positive, i.e.\ every open cover of $u$ is inhabited. Finally, $X$ is \emph{overt} if $X$ has a base of positive elements (which is always true classically). A locale $X$ is overt (and positive) if and only if the unique map $X \to 1$ is open (and surjective). See \cite[Section 2]{henry2016}, \cite[Part C]{Elephant} or \cite[Chapter 1]{manuellThesis} for more.

Finally, we need the concept of a \emph{triquotient map}. Triquotient maps are a large class of effective descent morphisms which contain open and proper surjections, as well as retractions. They are stable under pullback and composition. The canonical reference is \cite{plewe1997localic}.

\begin{theorem}\label{thm:alexandroff_hausdorff}
The locale of subquotients $[2^\N \paronto X]$ from $2^\N$ to a compact Hausdorff locale $X$ (see \cref{ex:partial_surjs_KHaus}) admits a triquotient surjection, and hence an effective descent morphism, to $1$.
\end{theorem}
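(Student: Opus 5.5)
The idea is to factor ${!}\colon [2^\N \paronto X] \to 1$ as a composite of triquotient surjections, each internally a `forcing step', and use that triquotient surjections are stable under pullback and composition. This matches the iterated forcing remarked on above.

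\emph{Step 1: complete regularity and an embedding into a cube.} A compact Hausdorff locale is (constructively) completely regular, so $X$ embeds as a closed sublocale of a product $[0,1]^B$, where $B$ is a set of locale maps $X \to [0,1]$ jointly separating opens. For instance, take $B = \Hom(X,[0,1])$, or a set of witnesses for the completely below relations. Closedness of the image follows from $X$ compact and $[0,1]^B$ Hausdorff.

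\emph{Step 2: force $B$ to be subcountable.} Pass to the open surjection $[\N \paronto B] \to 1$ of \cref{prop:NtoX_is_opensurj}. Internally there is a partial surjection $\N \paronto \underline{B}$, and I use it to exhibit $X$ as a closed sublocale of a quotient of a closed sublocale of $[0,1]^\N$. Concretely, let $D \subseteq \N$ be the domain of the partial surjection. Then $[0,1]^{\underline{B}}$ is the closed sublocale of $[0,1]^D$ cut out by the equalities $x_n = x_m$ whenever $n,m$ map to the same element. The remaining issue is that $D$ is only a subobject of $\N$ and need not be decidable, so $[0,1]^D$ is a retract-like quotient of $[0,1]^\N$ rather than a subspace. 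I would handle this by showing that the restriction map $[0,1]^\N \to [0,1]^D$ is a triquotient surjection internally. It is plausibly a retraction after a further open-surjective forcing, or open, using that $[0,1]$ is inhabited and overt, with strong density giving positivity.

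\emph{Step 3: Cantor space onto $[0,1]^\N$.} Binary expansion gives a proper surjection $2^\N \to [0,1]$, hence $2^\N \cong (2^\N)^\N \to [0,1]^\N$ is a proper surjection. Pulling back along the closed sublocales from Steps 1--2, we obtain a closed sublocale $C \subseteq 2^\N$ with a proper surjection $C \twoheadrightarrow X$. Its graph is a closed relation on $2^\N \times X$ that is single-valued and right-total, i.e.\ a generalised point of $[2^\N \paronto X]$ over the forcing locale. This produces a map $P \to [2^\N \paronto X]$ over $1$ from the composite forcing locale $P$, with $P \to 1$ a triquotient surjection.

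\emph{Step 4: transfer to $[2^\N \paronto X]$.} From $P \to [2^\N \paronto X] \to 1$ we know that the composite is a triquotient surjection. I still need ${!}$ itself to be one. I would show that $[2^\N\paronto X] \to 1$ is a retract (over $1$) of something triquotient, or more directly verify the triquotiency condition by constructing a triquotiency assignment on $\O[2^\N \paronto X]$ pushed forward from $P$. Alternatively, and I think more cleanly, I would repeat the construction fibrewise: for any generalised point of $[2^\N \paronto X]$ the same forcing applies, giving a section-like structure that exhibits ${!}$ as triquotient.

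The main obstacle is this last step together with the non-decidable domain issue in Step 2. Surjectivity of a composite does not in general descend to the second factor for triquotient maps, so the real work is producing the triquotiency assignment (or a strong density/positivity argument) on $[2^\N \paronto X]$ itself. I would first try to show $[2^\N \paronto X]$ is overt and positive directly, making ${!}$ an open surjection, using the basis of `finite approximations' and the constructions above to witness positivity of basic opens.
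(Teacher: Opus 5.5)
Your architecture matches the paper's: embed $X$ in a cube, force the index set to be subcountable, cover the cube by Cantor space, pull back the closed embedding, and externalise. However, Step 1 has a genuine gap, and Steps 2 and 4 are left unresolved, Step 4 because of a mistaken belief.

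In Step 1, complete regularity of a compact Hausdorff locale is \emph{not} available constructively. The standard argument goes through Urysohn's lemma, whose interpolation uses dependent choice, and that cannot be assumed internally in a sheaf topos. The paper therefore begins with an extra forcing step. It uses \cite[Proposition 2.3.17 \& Proposition 2.6.3]{henry2016} to obtain a positive overt locale $L$ such that $X$ becomes completely regular in $\Sh(L)$. Only then is $e\colon X \to [0,1]^{S}$, with $S = \Hom(X,[0,1])$, an embedding. Your iterated-forcing framework absorbs this step, but it has to be there.

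In Step 2 you need neither a retraction nor openness. Let $i\colon T \hookrightarrow \N$ be the domain of the partial surjection. Then $[0,1]^i\colon [0,1]^\N \to [0,1]^T$ is a map between compact Hausdorff locales, hence proper, so it is a surjection as soon as it is dense. Strong density comes from the localic axiom of choice \cite[Proposition 2.3.7]{henry2016corrected}. Since $T \subseteq \N$ has decidable equality, $[0,1]^T$ is overt with a base of positive opens given by finite meets $\bigwedge_k \pi_{t_k}^*(u_k)$ with distinct $t_k$ and $u_k > 0$. The map $([0,1]^i)^*$ sends such a meet to $\bigwedge_k \pi_{i(t_k)}^*(u_k)$, which is positive because $i$ is injective. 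Likewise, in Step 3 the surjectivity of the countable power $(2^\N)^\N \to [0,1]^\N$ is not automatic without choice. The paper gets it from the product theorem for proper surjections \cite[Theorem 4.5]{vermeulen1994proper}.

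The obstacle you identify in Step 4 is not one: triquotiency of a composite \emph{does} pass to the second factor. Suppose $h\colon P \to Y$ and $g\colon Y \to 1$ are such that $g \circ h$ has a triquotiency assignment $(gh)_\#$. Then $g_\# := (gh)_\# \circ h^*$ is a triquotiency assignment for $g$. Indeed, $h^*$ preserves finite meets, finite joins and directed joins, and $h^* g^* = (gh)^*$, so each defining condition for $g_\#$ is an instance of the corresponding condition for $(gh)_\#$. Hence, once you have the externalised point $P \to [2^\N \paronto X]$ with $P \to 1$ a composite of open surjections, you are done.

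The paper phrases the same step differently. Internally in $\Sh(L')$ the locale $[2^\N \paronto X]$ has a global point, so ${!}$ is a retraction and hence triquotient there. Externally, the projection $[2^\N \paronto X] \times L' \to L'$ is then triquotient, and triquotients descend along the open surjection $L' \to 1$ \cite[Proposition 1.8]{plewe1997localic}. Your fallback of proving $[2^\N \paronto X]$ overt and positive would make ${!}$ an open surjection. That is a stronger statement, nothing in your argument supports it, and the theorem does not need it.
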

\begin{proof}
    It follows from \cite[Proposition 2.3.17 \& Proposition 2.6.3]{henry2016} that there is a positive overt locale $L$ such that in $\Sh(L)$, (the base change of) the compact regular locale $X$ is completely regular. We now work in $\Sh(L)$.
    
    Let $S = \Hom_\Loc(X,[0,1])$. There is a map $e: X \to [0,1]^S$ induced from the family $(f\colon X \to [0,1])_{f \in S}$ by the universal property of the product. We claim the map $e$ is an embedding.
    
    This is true, because $X$ is completely regular. If $c \in \O X$ is a cozero element --- that is, $c = f^*( (0,1] )$ for some $f\colon X \to [0,1]$ --- then $c = e^* \pi_f^*( (0, 1] )$. But for every $u \in \O X$, we have $u = \bigvee\{ v \in \O X \mid v \pprec u \}$. Practically by definition, $v \pprec u$ gives that there is a cozero element $c$ such that $v \le c \le u$. Hence $u = \bigvee\{c \text{ cozero} \mid c \le u\}$. Note that, because we take all possible $c$, we avoid the axiom of choice.
    Thus the image of $e^*$ generates $\O X$ under joins, and hence $e^*$ is surjective, as required.
    
    Now we pass the topos of sheaves over the (positive, overt) locale $[\N \paronto S]$, so that $S$ becomes a subquotient of $\N$. Thus, we have $T \twoheadrightarrow S$ for some $T \subseteq \N$.  Since $[0,1]^{(-)}$ sends colimits to limits, we have $[0,1]^S \hookrightarrow [0,1]^T$, and so composing this with $e$ we have $e'\colon X \hookrightarrow [0,1]^T$ for $T \subseteq \N$.
    
    We claim that the map $[0,1]^i\colon [0,1]^\N \to [0,1]^T$ induced by $i\colon T \hookrightarrow \N$ is a (proper) quotient map. Since the locales involved are compact and Hausdorff, it suffices to show it is dense. In fact, we will show it is strongly dense. Note that, as a subset of $\N$, $T$ has decidable equality. By the `localic axiom of choice' from \cite[Proposition 2.3.7]{henry2016corrected}, we know that $[0,1]^\N$ and $[0,1]^T$ are overt and that the positive opens of $[0,1]^T$ are given by finite meets of elements $\pi_t^*(u_t)$ with distinct $t \in T$ and each $u_t > 0$. The preimage under $[0,1]^i$ sends $\pi_t^*(u_t)$ to $\pi_{i(t)}^*(u_t)$. Hence a finite meet $\bigwedge_i \pi_{t_i}^*(u_i)$ is sent to $\bigwedge_i \pi_{i(t_i)}^*(u_i)$. If each $t_i$ is distinct, so is each $i(t_i)$, since $i$ is injective. Thus, $[0,1]^i$ maps positive elements to positive elements and is hence strongly dense.
    
    So far we have shown that $X$ is a closed sublocale of a proper quotient of $[0,1]^\N$.
    Now recall that there is a proper surjection $2^\N \twoheadrightarrow [0,1]$ sending binary expansions to their corresponding real. (It is a surjection, since it sends sequences with finitely many ones to dyadic rationals, which are dense in $[0,1]$.)
    This induces a map $2^\N \cong 2^{\N \times \N} \cong (2^\N)^\N \twoheadrightarrow [0,1]^\N$, which is seen to also be a proper surjection using the argument of \cite[Theorem 6.6]{clementino1996topology} applied to the fact that proper surjections into $[0,1]^\N$ are closed under products in the slice category (see \cite[Theorem 4.5]{vermeulen1994proper}).
    
    We can compose this with $[0,1]^\N \twoheadrightarrow [0,1]^T$ to obtain a quotient $2^\N \twoheadrightarrow [0,1]^T$.
    
    In summary, $X$ is a closed sublocale of a proper quotient of $2^\N$.
    Taking the pullback
    
    \[\begin{tikzcd}
    	\bullet & {2^\N} \\
    	X & {[0,1]^T}
    	\arrow[hook, from=1-1, to=1-2]
    	\arrow[two heads, from=1-1, to=2-1]
    	\arrow["\lrcorner"{anchor=center, pos=0.125}, draw=none, from=1-1, to=2-2]
    	\arrow[two heads, from=1-2, to=2-2]
    	\arrow[hook, from=2-1, to=2-2]
    \end{tikzcd}\]
    we see that $X$ is also a proper quotient of a closed sublocale of $2^\N$ --- that is, a subquotient of $2^\N$. This yields a point of the locale $[2^\N \paronto X]$. Thus, in particular, the unique map $!\colon [2^\N \paronto X] \to 1$ is a triquotient.
    
    The above result has taken place in a topos $\Sh(L')$ where $L'$ is a locale admitting a composite of open surjections $L' \twoheadrightarrow L \twoheadrightarrow 1$. Externalising we have
    \[\begin{tikzcd}
    	{[2^\N \paronto X] \times L'} & {[2^\N \paronto X]} \\
    	L' & 1
    	\arrow[two heads, from=1-1, to=1-2]
    	\arrow[two heads, from=1-1, to=2-1]
    	\arrow["\lrcorner"{anchor=center, pos=0.125}, draw=none, from=1-1, to=2-2]
    	\arrow[from=1-2, to=2-2]
    	\arrow[two heads, from=2-1, to=2-2]
    \end{tikzcd}\]
    where the double-headed arrows are all triquotients
    and hence $!\colon [2^\N \paronto X] \to 1$ is a triquotient (see \cite[Proposition 1.8]{plewe1997localic} or alternatively use that triquotients descend along open surjections).
\end{proof}

Note that as in the discrete case, we can also form a locale $[2^\N \paronto_{H} X]$ of fibrewise partial surjections for a proper separated map $X \to H$. This can be found by externalising the construction of $[2^\N \paronto X]$ internal to $\Sh(H)$.

\section{Syntactic constructions as bundles}\label{sec:syntax}

In this section, we turn to the interpretation of logic in bundles over a localic category.  We proceed as follows:
\begin{enumerate}
    \item Firstly, we discuss the two fragments of infinitary first-order logic, \emph{geometric} and \emph{dual geometric}, that we consider in this paper.
    \item We then recall the interpretation of geometric and dual geometric logic in certain bundles over a localic category.
    \item In \cref{sec:base_change}, we recall how this interpretation is pseudofunctorial with respect to the base localic category.
    \item Finally, we construct some important examples of models over localic categories.  Namely, for each (dual) geometric theory, we construct a localic category and a model of the theory over this category which, as will be shown in \cref{sec:univ_prop}, is \emph{generic} in a suitable sense.
\end{enumerate}

\subsection{Geometric logic and dual geometric logic}\label{sec:logic}

We assume familiarity with the syntax of (infinitary) first-order logic (Section D1 of \cite{Elephant} is a good introduction).  We highlight the two fragments of infinitary first-order logic considered in this paper:
    \begin{enumerate}
        \item The first is \emph{geometric logic}, which is the fragment of infinitary first order logic involving finite conjunction, infinitary disjunction, equality and existential quantification, i.e.\ the symbols $\{\,\land,\bigvee,\exists,=\,\}$, as studied extensively by topos theorists.  Recall that, in geometric logic, finite conjunctions distribute over arbitrary joins.

        \item The latter is \emph{dual geometric logic}, where we can take only finite joins but infinitary meets.  This is the fragment of infinitary first order logic involving the symbols $\{\,\bigwedge,\lor,\exists,=\,\}$.  We include in our syntax the infinitary distributivity axiom
        \[
        \varphi \lor \bigwedge_{i \in I} \psi_i \dashv \vdash_{\vec{x}} \bigwedge_{i \in I} (\varphi \lor \psi_i).
        \]
    \end{enumerate}
Because geometric and dual geometric logic lack an implication symbol, we employ a `Gentzen' style sequent calculus, where a  \emph{sequent} $\phi \vdash_{\vec{x}} \psi$ is understood to mean ``for all $\vec{x}$, if $\phi(\vec{x})$ then $\psi(\vec{x})$''.  A theory $\theory = \{\,\varphi_i \vdash_{\vec{x}_i} \psi_i \mid i \in I\,\}$ over a signature $\Sigma$ is a collection of such sequents, and is said to be \emph{geometric} (respectively, \emph{dual geometric}) if the formulae $\phi_i, \psi_i$ involved in each axiom of $\theory$ are geometric (resp., dual geometric).

For convenience, we will restrict to signatures only involving relation symbols.  This is possible since a function symbol can be replaced by a relation symbol and axioms expressing that the relation is the graph of the function (see \cite[Lemma D1.4.9]{Elephant}).
\begin{remark}\label{rem:make_everything_singlesorted}
    When working with geometric logic, we will often also make the convenient assumption that our signatures are single-sorted.  This is possible by combining all the sorts $\{\,A_i\mid i \in I\,\}$ into one and introducing new relation symbols $U_i$ such that $U_i(x)$ expresses that ``$x$ is in sort $A_i$'', and axioms such as $\vdash_x \bigvee_{i \in I} U_i(x)$ (expressing that every element of the single sort in our new signature comes from one of the previous sorts).  See \cite[Lemma D1.4.13]{Elephant} for more details.  
    
    Similarly, a dual geometric theory with a finite set of sorts is equivalent to one with a single sort; however, the same construction \emph{cannot} be performed for a dual geometric theory with an infinite set of sorts.
\end{remark}
\begin{remark}
    Note that both geometric and dual geometric logic subsume \emph{coherent} logic (also called \emph{positive} logic), and hence, via a process commonly called \emph{Morleyisation} \cite[Lemma D1.5.13]{Elephant}, they also contain classical first-order logic.
\end{remark}
\begin{example}
    Geometric logic is more expressive than standard finitary first-order logic.  For instance, consider the theory of \emph{fields which are algebraic over their prime subfield}, which adds to the usual axioms of a field the geometric sequent
    \[
    \top\vdash_x \bigvee_{q \in \Z[x] \setminus \{0\}} q(x) = 0.
    \]
    A model of the theory is thus a subfield of one of the algebraically closed fields $\overline{\mathbb{Q}}$, or $\overline{\mathbb{F}}_p$ for $p$ prime, depending on the characteristic.  This theory cannot admit a finitary axiomatisation because the size of each model is bounded, contradicting the upwards L\"owenheim-Skolem theorem (see \cite[Corollary 6.1.4]{hodges}).
\end{example}
\begin{example}[cf.\ Example D1.1.7(l) \cite{Elephant}]\label{ex:metric_theory} %
    Certain theories that resist a geometric axiomatisation can instead be axiomatised using dual geometric logic.  Consider the theory of \emph{extended (separated) metric spaces}: the single-sorted theory with a binary relation symbol $D_{\leqslant \epsilon} \subseteq X^2$ for each rational number $\epsilon > 0$ and axioms expressing that $D_{\leqslant \epsilon} $ is the set of pairs $(x,y) \in X^2$ whose distance is less than or equal to $\epsilon$, i.e.\ the following sequents.
    \begin{align*}
        x = y \dashv\! & \vdash_{x,y} \bigwedge_{\epsilon > 0} D_{\leqslant \epsilon}(x,y), \\
        D_{\leqslant \epsilon }(x,y) & \vdash_{x,y} D_{\leqslant \delta} (y,x) & \forall \delta \geqslant \epsilon > 0, \\
        D_{\leqslant \epsilon}(x,y) \land D_{\leqslant \delta}(y,z) & \vdash_{x,y,z} D_{\leqslant \epsilon + \delta}(x,z) & \forall \delta, \epsilon > 0.
    \end{align*}
    Given a set $X$ equipped with such relations $D_{\leqslant \epsilon} \subseteq X^2$, the metric $d \colon X^2 \to \R \cup \{\infty\}$ is given by $d(x,y) = \inf\{\,\epsilon \mid (x,y) \in D_{\leqslant \epsilon }\,\}$.  
\end{example}

\subsection{Interpreting logic in a bundle}
We now describe how (dual) geometric logic is interpreted over a localic category $\H$.
\begin{definition}\label{def:model_over_groupoid}
\begin{enumerate}
    \item Let $\Sigma$ be a signature.  A $\Sigma$-\emph{structure} $M$ \emph{over a locale} $H$ consists of a bundle $A^M \to H$ for each sort $A$ of $\Sigma$ and, for each relation symbol $R \subseteq A_1 \times \dots \times A_n$, an interpretation as a sublocale of the wide pullback
    \[
    \begin{tikzcd}
        R^M \ar[hook]{r} & A^M_1 \times_H \dots \times_H A^M_n.
    \end{tikzcd}
    \]

    \item If $H$ is the space of objects of a localic category $\H$, a $\Sigma$-\emph{structure over the localic category} $\H$ is a $\Sigma$-structure over the objects $H$ such that each carrier $A^M_i$ is equipped with an $H_1$-action $\theta^{A^M_i} \colon A^M_i \times_H H_1 \to A^M_i$, and each relation symbol $R^M$ is \emph{stable} under the action $\theta^{A^M_1} \times_H \dots \times_H \theta^{A^M_n}$, by which we mean that there is a factorisation
    \[\begin{tikzcd}
	{R^M \times_H H_1 \times_H \dots \times_H H_1} &&& {R^M} \\
	{A^M_1 \times_H \dots \times_H A^M_n \times_H H_1 \times_H \dots \times_H H_1} &&& {A^M_1 \times_H \dots \times_H A^M_n}
	\arrow[dashed, from=1-1, to=1-4]
	\arrow[hook, from=1-1, to=2-1]
	\arrow[hook, from=1-4, to=2-4]
	\arrow["{(\theta^{A^M_1} \times_H \dots \times_H \theta^{A^M_n}) \circ \sigma}", from=2-1, to=2-4]
\end{tikzcd}\]
    where $\sigma\colon A^M_1 \times_H \dots \times_H A^M_n \times_H H_1 \times_H \dots \times_H H_1 \to A^M_1 \times_H H_1 \times_H \dots \times_H A^M_n \times_H H_1$ is the obvious isomorphism,
    or in point-set notation, if $(x_1,\dots, x_n) \in R^M$, then $(h_1 \cdot x_1, \dots , h_n \cdot x_n) \in R^M$ for all compatible $h_i \in H_1$.

    \item\label{def:model:enum:intrepretation} Let $M$ be a $\Sigma$-structure and let $\phi$ be a formula of (dual) geometric logic over $\Sigma$.  The \emph{interpretation} $\phi^M$ in $M$ is defined recursively as follows.
    \begin{enumerate}
        \item If $\phi$ is just a relation symbol, then $\phi^M = R^M$.

        \item\label{def:model:enumenum:equals} If $\phi$ is the equality predicate $(x =^A y)$ in the sort $A$, then $\phi^M$ is defined as the diagonal $\Delta_{A^M} \colon A^M \hookrightarrow A^M \times_{H} A^M$.
        
        \item If $\phi \equiv \bigwedge_{i \in I} \psi_i$ (respectively, $\phi \equiv \bigvee_{i \in I} \psi_i$), then $\phi^M$ is the intersection (resp., union) of the sublocales $\psi^M_i$. For example, the interpretation $(\psi \land \chi)^M$ is given by the pullback
        \[
        \begin{tikzcd}
            (\psi \land \chi)^M \ar[hook]{r} \ar[hook]{d} & \psi^M \ar[hook]{d} \\
            \chi^M \ar[hook]{r} &  A^M_1 \times_H \dots   \times_H A^M_n .
        \end{tikzcd}
        \]
        \item\label{def:model:enumenum:exists} If $\phi \equiv \exists x_{n+1} \, \psi(x_1, \dots , x_{n+1})$, then $\phi^M$ is the image of the composite
        \[
        \begin{tikzcd}
            \psi^M \ar[hook]{r} & A_1^M \times_H \dots \times_H A_n \times_H A^M_{n+1} \ar{r}{\pi_{1,\dots, n}} &  A_1^M \times_H \dots \times_H A^M_{n},
        \end{tikzcd}
        \]
        where $\pi_{1, \dots, n}$ is the projection map to the first $n$ factors.
    \end{enumerate}
    \item Let $\theory$ be a theory over the signature $\Sigma$, i.e.\ $\theory$ consists of axioms of the form $\varphi(x_1, \dots , x_n) \vdash_{x_1:A_1, \dots x_n:A_n} \psi(x_1, \dots , x_n)$.  A $\Sigma$-structure is a $\theory$-model if for each axiom of $\theory$, there is an inclusion of sublocales $\varphi^M \subseteq \psi^M$.
    
     \item Let $M$ and $N$ be models of a theory $\theory$ over $\H$.  A \emph{model homomorphism} $M \xrightarrow{\tau} N$ consists of, for each sort $A$ of $\theory$, a morphism of bundles
        \[
        \begin{tikzcd}
            A^M \ar{rr}{\tau_A} \ar{rd} &&A^N \ar{ld} \\
            & H &
        \end{tikzcd}
        \]
        such that:
        \begin{enumerate}
            \item each $\tau_A$ is \emph{equivariant} with respect to the endowed $H_1$-actions, i.e.\ the square
            \[
            \begin{tikzcd}
                A^M \times_{H_0} H_1 \ar{rr}{\tau_A \times_{H_0} H_1} \ar{d}[']{\theta^{A^M}} && A^N \times_{H_0} H_1 \ar{d}{\theta^{A^N}} \\
                A^M \ar{rr}{\tau_A} && A^N
            \end{tikzcd}
            \]
            commutes, or in point-set notation, $h \cdot \tau_A(x) = \tau_A(h \cdot x)$;

            \item and $\tau$ preserves the $\Sigma$-structure of $M$ and $N$ in the sense that, for each relation symbol $R \subseteq A_1 \times \dots \times A_n$ of $\theory$, there is a factorisation
            \[
            \begin{tikzcd}
                R^M \ar[hook]{d} \ar[dashed]{rrr} &&& R^N \ar[hook]{d} \\
                A_1^M \times_{H_0} \dots \times_{H_0} A_n^M \ar{rrr}{\tau_{A_1} \times_{H_0} \dots \times_{H_0} \tau_{A_n}} &&& A_1^N \times_{H_0} \dots \times_{H_0} A_n^N.
            \end{tikzcd}
            \]
        \end{enumerate}
        We use $\Tmod(\H)$ to denote the category of models of $\theory$ over $\H$.
\end{enumerate}
Note that for this interpretation to be sound we require some restrictions on the models. The LH-models and PS-models defined below provide settings in which the interpretation is sound for geometric and dual geometric logic respectively.
\end{definition}
\begin{definition}\label{def:LH_models_PS_models}
   We differentiate between different species of models.
    \begin{enumerate}
        \item Suppose that $\theory$ is a geometric theory.  If, for each sort of $\Sigma$, the underlying carrier $A^M \to H$ is a \emph{local homeomorphism}, and the interpretation of each relation symbol $R^M \hookrightarrow A^M_1 \times_H \dots \times_H A^M_n$ is an \emph{open} sublocale, then we say that $M$ is a \emph{local homeomorphism} model, or an \emph{LH-model} for short.

        We will use $\Tmod^\rmLH(\H) \subseteq \Tmod(\H)$ to denote the subcategory of LH-models of $\theory$, and $\Tmod^\rmLH_{\cong}(\H)$ to denote the groupoid of all \emph{invertible} homomorphisms of LH-models of $\theory$.

        \item Suppose that $\theory'$ is a \emph{dual} geometric theory.  If, for each sort of $\Sigma$, the underlying carrier $A^M \to H$ is a \emph{proper} and \emph{separated} map, and the interpretation of each relation symbol $R^M \hookrightarrow A^M_1 \times_H \dots \times_H A^M_n$ is a \emph{closed} sublocale, then we say that $M$ is a \emph{proper separated} model, or a \emph{PS-model} for short.

        We will use $\Tpmod^\rmPS(\H) \subseteq \Tpmod(\H)$ to denote the subcategory of PS-models of $\theory'$, and $\Tpmod^\rmPS_{\cong}(\H)$ to denote the groupoid of {invertible} homomorphisms of PS-models of $\theory'$.
    \end{enumerate}
\end{definition}
\begin{remark}
    Let $M$ be an LH-structure over a locale $H$.  By \cite[Lemma C1.3.2]{Elephant}, the maps $\Delta_{A^M}$ and $\pi_{1,\dots,n}$, considered in \cref{def:model_over_groupoid}\cref{def:model:enumenum:equals} and \cref{def:model:enumenum:exists} respectively, are local homeomorphisms, and in particular open maps.  Hence, since open sublocales are closed under finite intersections and arbitrary unions, and the image of an open sublocale under an open map is again open, the interpretation of any \emph{geometric} formula $\phi^M \hookrightarrow A_1^M \times \dots \times A_n^M$ is an open sublocale, and moreover the composite $\phi^M \hookrightarrow A_1^M \times \dots \times A_n^M \to H$ is a local homeomorphism, since an open sublocale inclusion is a local homeomorphism.

    In a similar fashion, if $M$ is a PS-structure, the maps $\Delta_{A^M}$ and $\pi_{1,\dots,n}$ are closed by \cite[Proposition 4.1]{vermeulen1994proper}, and so the interpretation of any \emph{dual geometric} formula $\phi^M \hookrightarrow A_1^M \times \dots \times A_n^M$ is a closed sublocale and the composite $\phi^M \hookrightarrow A_1^M \times \dots \times A_n^M \to H$ is a proper separated map.
\end{remark}
\begin{remark}\label{rem:LH_model_in_topos}
    The LH-models of a geometric theory $\theory$ over a localic groupoid $\H$, as in \cref{def:LH_models_PS_models}, coincide with the usual notion of a $\theory$-model internal to the topos $\Sh(\H)$.  
\end{remark}
\begin{example}\label{ex:models_over_1}
    Let $\1$ denote the trivial (one-arrow) localic category.  Recall that an LH-bundle over $\1$ is the same datum as a discrete set, and so the LH-models of a geometric theory $\theory$ over $\1$ correspond to the usual (i.e.\ set-based) models of $\theory$.   
\end{example}
\begin{example}\label{ex:PS_models_over_1}
    Recall also that the proper separated bundles over the one-point space are the compact Hausdorff locales, and so, classically, the PS-models of a dual geometric theory $\theory'$ over $\1$ correspond to a (set-based) model of $\theory'$ equipped with a compact Hausdorff topology such that the interpretation of every basic relation is a closed subset.

    In particular, a PS-model over $\1$ of the theory of extended metric spaces from \cref{ex:metric_theory} consists of a set equipped with both an (extended) metric and a compact Hausdorff topology such that metric is lower semi-continuous with respect to the topology, i.e.\ the closed balls for the metric are closed in the compact Hausdorff topology.  These structures have been studied in \cite{abbadini_hofmann} under the name \emph{metric compact Hausdorff spaces} (see also \cite{hofmann_reis}). %
\end{example}

\subsection{Models under change of base}\label{sec:base_change}
We now describe how structures and models behave under a change of base.  The first observation to make is that the map that sends a localic category $\H$ to the category of $\Sigma$-structures over $\H$ extends to a pseudofunctor $\Sstruct(-) \colon \LocCat\op \to \CAT$.  In particular, given an internal functor $\Phi\colon \G \to \H$ between localic categories and a $\Sigma$-structure $M$ on $\H$, we obtain a $\Sigma$-structure over $\G$ by `pulling back' $M$ along $\Phi$.  We denote the \emph{pullback structure} by $\Phi^\ast M$.

Explicitly, if $(e\colon A^M \to H_0, \beta\colon A^M \times_{H_0} H_1 \to A^M)$ is the bundle over $\H$ interpreting a sort $A$ in $\Sigma$, then the interpretation of $A$ in the structure $\Phi^\ast M$ is given by the pullback bundle ($e'$, $\beta'$) where $e'$ is given by the pullback
\[\begin{tikzcd}
	{\Phi^* A^M} \ar{r}{\Phi'_0} & A^M \\
	{G_0} & {H_0},
	\arrow["{e'}"', from=1-1, to=2-1]
	\arrow[from=1-1, to=1-2]
	\arrow["e", from=1-2, to=2-2]
	\arrow["{\Phi_0}"', from=2-1, to=2-2]
	\arrow["\lrcorner"{anchor=center, pos=0.125}, draw=none, from=1-1, to=2-2]
\end{tikzcd}\]
and $\beta'$ is defined via the universal property of this pullback as in the following diagram.
\[\begin{tikzcd}
	\Phi^\ast A^M \times_{G_0} G_1 &&  A^M \times_{H_0} H_1 \\
	& {\Phi^\ast A^M} & {A^M} \\
	{G_1} & {G_0} & {H_0}
	\arrow["{\langle \Phi'_0, \Phi_1 \rangle}", from=1-1, to=1-3]
	\arrow["{\beta'}", dashed, from=1-1, to=2-2]
	\arrow["{\pi_2}"', from=1-1, to=3-1]
	\arrow["\beta", from=1-3, to=2-3]
	\arrow[from=2-2, to=2-3]
	\arrow["{e'}"', from=2-2, to=3-2]
	\arrow["\lrcorner"{anchor=center, pos=0.125}, draw=none, from=2-2, to=3-3]
	\arrow["e", from=2-3, to=3-3]
	\arrow["t"', from=3-1, to=3-2]
	\arrow["{\Phi_0}"', from=3-2, to=3-3]
\end{tikzcd}\]
(the commutativity of the outside square is ensured since $\beta$ defines an action).  In other words, $\Phi^* A^M \times_{G_0} G_1 $ has points $(x, a, f \colon a \to b)$ and $\beta'$ acts on points by $(x,a,f) \mapsto (\beta(x,\Phi_1(f)),b)$.
Similarly, for each relation symbol $R \subseteq A_1 \times \dots \times A_n$ in $\Sigma$, we define $R^{\Phi^\ast M}$ as the pullback
\[
\begin{tikzcd}
    \Phi^\ast R^M \ar{r} \ar[hook]{d} \ar[draw=none]{rdd}[anchor=center, pos=0.125]{\lrcorner} & R^M \ar[hook]{d} \\
    A_1^{\Phi^\ast M} \times_{G_0} \dots \times_{G_0} A_n^{\Phi^\ast M} \ar{d} \ar[dashed]{r} & A_1^M \times_{H_0} \dots \times_{H_0} A_n^M \ar{d} \\
    G_0 \ar{r}{\Phi_0} & H_0.
\end{tikzcd}
\]
We easily see that $R^{\Phi^\ast M}$ is stable under the $G_1$-action defined above.  This completes the description of $\Phi^\ast M$.

For each homomorphism $\sigma \colon M \to N$ of $\Sigma$-structures over $\H$, there is an induced homomorphism $\Phi^\ast \sigma$ whose component, at a sort $A$ of $\Sigma$, is induced by the universal property of the pullback as in the following diagram.
\[
\begin{tikzcd}
    \Phi^\ast A^M \ar{rr} \ar[bend right]{ddr} \ar[dashed]{dr}{\Phi^\ast \sigma_A} && A^M \ar{dr}{\sigma_A}  \ar[bend right]{ddr} & \\
    & \Phi^\ast A^N \ar[crossing over]{rr} \ar{d} \ar[draw=none]{rrd}[anchor=center, pos=0.125]{\lrcorner}&& A^N \ar{d} \\
    & G_0 \ar{rr}{\Phi_0} && H_0
\end{tikzcd}
\]
Since the $G_1$-actions on $\Phi^\ast A^M$ and $\Phi^\ast A^N$ are defined universally, it is straight-forward to show that $\Phi^\ast \sigma_A$ is equivariant, as well as the fact that $\Phi^\ast \sigma$ preserves the interpretation of relations.  Therefore, for each internal functor $\Phi \colon \G \to \H$ and signature $\Sigma$, we have constructed a functor $\Sstruct(\Phi) \colon \Sstruct(\H) \to \Sstruct(\G)$.

It remains to describe how $\Sstruct(-)$ acts on an internal natural transformation $\tau \colon \Phi \Rightarrow \Psi$, defined by a map $\tau \colon G_0 \to H_1$.  Once again, this is defined by the universal property of the pullback.  Let $M$ be a $\Sigma$-structure over $\H$.  First note that, by the pullback lemma, the left-hand square in the diagram
\[
\begin{tikzcd} 
    \Psi^\ast A^M \ar{r} \ar{d} & t^\ast A^M \ar{d} \ar{r} & A^M \ar{d} \\
    G_0 \ar{r}{\tau} \ar[bend right]{rr}[']{\Psi_0} & H_1 \ar{r}{t} & H_0
\end{tikzcd}
\]
is a pullback, for each sort $A$ of $\Sigma$, where $\Psi^\ast A^M \to t^\ast A^M$ is the morphism universally induced by the pullback square on the right.  Thus, there is a universally induced morphism
\[
\begin{tikzcd}
    \Phi^\ast A^M \ar{rr} \ar[bend right]{ddr} \ar[dashed]{dr}{\tau^\ast_{A^M}} && s^\ast A^M \ar{dr}{\theta}  \ar[bend right]{ddr}& \\
    & \Psi^\ast A^M \ar[crossing over]{rr} \ar{d} \ar[draw=none]{rrd}[anchor=center, pos=0.125]{\lrcorner} && t^\ast A^M \ar{d} \\
    & G_0 \ar{rr}{\tau} && H_1,
\end{tikzcd}
\]
where $\theta \colon s^\ast A^M \to t^\ast A^M$ is the descent datum associated with the action $ A^M \times_{H_0} H_1 \to A^M$ (see \cite[Appendix A]{manuell2023representing}) and $\Phi^\ast A^M \to s^\ast A^M$ is the morphism induced by the universal property of the pullback $s^\ast A^M$.  Together, the maps $\tau_{A^M}^\ast$ define a homomorphism of $\Sigma$-structures $\tau^\ast_M \colon \Phi^\ast M \to \Psi^\ast M$, which in turn are the components of a natural transformation $\tau^\ast \colon \Phi^\ast \Rightarrow \Psi^\ast$.

As $\Sstruct(\Phi)$ and $\Sstruct(\tau)$ are defined using the universal property of the pullback, they are automatically pseudofunctorial.  Thus, for each signature $\Sigma$, there is a pseudofunctor
\[
\Sstruct(-) \colon \LocCat\op \to \CAT
\]
that sends a localic category $\H$ to the category of $\Sigma$-structures over $\H$.

If $\theory$ is a theory over the signature $\Sigma$, then the pseudofunctor $\Sstruct(-)$ does \emph{not} necessarily restrict to a pseudofunctor $\Tmod(-) \hookrightarrow \Sstruct(-)$ that sends a localic category to the $\theory$-models over it.  This is because surjective locale morphisms are not necessarily stable under pullback.
\begin{example}
    For instance, a surjective locale map $p \colon X \twoheadrightarrow Y$ can be viewed as a model for the theory of \emph{inhabited objects} $\{\top \vdash \exists x\colon X.\ \top\}$.  If the pullback of $f$ along $h \colon Z \to Y$ is not a surjection, then $h^\ast X$ is no longer a model for this theory.
\end{example}
Nonetheless, we are able to obtain pseudofunctors of $\theory$-models by restricting to LH-structures or PS-structures.
\begin{lemma}\label{lem:change_of_base_for_formulae}
    Let $\Phi\colon \G \to \H$ be an internal functor, and let $M$ be a $\Sigma$-structure over $\H$.  If $M$ is an LH-structure, then $\Phi^\ast M$ is an LH-structure, and $\Phi^\ast$ preserves the interpretation of geometric formulae.  If $M$ is a PS-structure, then $\Phi^\ast M$ is a PS-structure and $\Phi^\ast$ preserves the interpretation of dual geometric formulae.
\end{lemma}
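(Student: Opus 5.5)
The plan is to reduce everything to statements about pullback along the locale map $\Phi_0\colon G_0 \to H_0$, since the $G_1$-actions play no role in either the LH/PS conditions or in the interpretation of formulae. (Stability of the pulled-back relations under the action was already noted in the construction of $\Phi^\ast M$.) First, the structural part. Local homeomorphisms, open sublocales, proper maps, separated maps and closed sublocales are all stable under pullback. Each carrier $A^{\Phi^\ast M} \to G_0$ is the pullback of $A^M \to H_0$ along $\Phi_0$, so it is a local homeomorphism (respectively proper separated). Since wide pullbacks commute with pullbacks, we have $A_1^{\Phi^\ast M} \times_{G_0} \dots \times_{G_0} A_n^{\Phi^\ast M} \cong \Phi_0^\ast(A_1^M \times_{H_0} \dots \times_{H_0} A_n^M)$. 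The sublocale $R^{\Phi^\ast M}$ is then the pullback of $R^M$ along the comparison map, and hence is open (respectively closed) when $R^M$ is. So $\Phi^\ast M$ is an LH-structure (respectively PS-structure).

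Next, I prove by induction on the formula $\phi$ that $\phi^{\Phi^\ast M}$ is the pullback of $\phi^M$ along $P_{\vec A}\colon \Phi_0^\ast(A_1^M \times_{H_0}\dots\times_{H_0} A_n^M) \to A_1^M \times_{H_0}\dots\times_{H_0} A_n^M$.
\begin{enumerate}
\item Relation symbols hold by definition.
\item For equality, the diagonal of a pullback bundle is the pullback of the diagonal.
\item Finite intersections are pullbacks of sublocales, so they are preserved by pulling back.
\item For unions, in the LH case the $\psi_i^M$ are open, and pullback of opens is the frame homomorphism $P^\ast$, which preserves arbitrary joins. In the PS case, finite unions of closed sublocales correspond to finite meets of opens and arbitrary intersections of closeds to arbitrary joins of opens. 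Both are again preserved by $P^\ast$.
\end{enumerate}
In every case this uses that the interpretations are open (respectively closed), which the induction carries along by the preceding remark.

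The main step is existential quantification, since images are not in general stable under pullback. The argument is a Beck--Chevalley condition. In the LH case the projection $\pi\colon A_1^M\times_{H_0}\dots\times_{H_0} A_{n+1}^M \to A_1^M\times_{H_0}\dots\times_{H_0} A_n^M$ is a local homeomorphism, hence open, and the image of the open $\psi^M$ is $\pi_!(\psi^M)$. Open maps satisfy Beck--Chevalley for pullback squares, $P^\ast \pi_! = \pi'_! P'^\ast$, where $\pi'$ is the pullback projection over $G_0$. Combined with the inductive hypothesis $\psi^{\Phi^\ast M} = P'^\ast\psi^M$, this gives $(\exists x\,\psi)^{\Phi^\ast M} = P^\ast((\exists x\,\psi)^M)$.

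In the PS case the projection is proper, and the image of the closed sublocale corresponding to the open $c$ is the closed sublocale corresponding to $\pi_\ast(c)$. Proper maps satisfy the dual Beck--Chevalley condition $P^\ast\pi_\ast = \pi'_\ast P'^\ast$ (Vermeulen). In both cases I would cite these Beck--Chevalley facts rather than reprove them.

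Finally, note that the entire argument only concerns the underlying bundles over $G_0$ and $H_0$. The action data is irrelevant because interpretations of formulae are defined purely as sublocales of the fibred products over the object locale.
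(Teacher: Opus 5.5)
Your proposal is correct and follows the same route as the paper: first the pullback-stability of the LH/PS conditions, then an induction on formulae in which equality is handled via diagonals, the connectives via the pullback map on open (resp.\ closed) sublocales, and the existential step via stability of images under pullback. The only difference is one of phrasing: you state that last step as the Beck--Chevalley condition for open (resp.\ proper) maps, whereas the paper states it as pullback-stability of open (resp.\ proper) surjections, and these amount to the same fact.
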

\begin{proof}
    First note that, since local homeomorphisms and open sublocales are stable under pullback, if $M$ is an LH-structure, then so too is $\Phi^\ast M$.  Similarly, as proper separated maps and closed sublocales are also preserved under pullback, if $M$ is a PS-structure then so too is $\Phi^\ast M$.

    We now turn to the preservation of interpretation of formulae.  The equality predicate $=$, interpreted by the diagonal map, is automatically preserved by pullback.

    Suppose $M$ is an LH-structure.  Recall that the induced map on sublocales $\Phi^\ast \colon \Sub(M) \to \Sub(\Phi^\ast M)$ preserves finite intersections and arbitrary unions of open sublocales.  Thus, $\Phi^\ast$ preserves the interpretation of the symbols $\land, \bigvee$.  Moreover, as open surjections of locales are preserved by pullback (see \cite[Proposition V.4.1]{joyal1984galois}), so too is the image of an open sublocale along an open map.  Thus, $\Phi^\ast$ also preserves the interpretation of $\exists$.

    Similarly, if $M$ is a PS-structure, $\Phi^\ast \colon \Sub(M) \to \Sub(\Phi^\ast M)$ preserves arbitrary intersections and finite unions of closed sublocales.  And moreover, as proper surjections are stable under pullback (see \cite[Proposition 4.2]{vermeulen1994proper}), $\Phi^\ast$ preserves the interpretation of $\exists$.
\end{proof}
\begin{corollary}
    If $M$ is an LH-model of a geometric theory $\theory$, so too is $\Phi^\ast M$.  Similarly, if $M$ is a PS-model of a dual geometric theory $\theory'$, then so is $\Phi^\ast M$.
\end{corollary}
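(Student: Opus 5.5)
The plan is to deduce the corollary directly from \cref{lem:change_of_base_for_formulae}, together with the observation that the $\theory$-model condition only involves inclusions between interpretations of formulae. I will treat the LH case and note at the end that the PS case is identical.

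Let $M$ be an LH-model of the geometric theory $\theory$ over $\H$, and let $\Phi\colon \G \to \H$ be an internal functor. By \cref{lem:change_of_base_for_formulae}, $\Phi^\ast M$ is an LH-structure over $\G$. It is a $\Sigma$-structure over the localic category $\G$ because of the construction of pullback structures in \cref{sec:base_change}: each carrier receives a $G_1$-action, and each relation is stable under that action. The lemma also tells us that for every geometric formula $\phi$ in context $\vec{x}$, the interpretation $\phi^{\Phi^\ast M}$ is the pullback of $\phi^M$ along the induced map $A_1^{\Phi^\ast M}\times_{G_0}\dots\times_{G_0}A_n^{\Phi^\ast M} \to A_1^M\times_{H_0}\dots\times_{H_0}A_n^M$. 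I will use the lemma in exactly this sense, matching the proof's claim that $\Phi^\ast\colon \Sub(M)\to\Sub(\Phi^\ast M)$ commutes with each formula-forming operation.

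Next take an axiom $\varphi \vdash_{\vec{x}} \psi$ of $\theory$. Since $M$ is a model, we have $\varphi^M \subseteq \psi^M$ as sublocales of $A_1^M\times_{H_0}\dots\times_{H_0}A_n^M$. Pulling back sublocales along a fixed locale map is monotone, since it is the inverse image of the pullback functor on subobjects. Therefore $\Phi^\ast\varphi^M \subseteq \Phi^\ast\psi^M$, which by the preceding paragraph says exactly that $\varphi^{\Phi^\ast M}\subseteq\psi^{\Phi^\ast M}$. Hence $\Phi^\ast M$ satisfies every axiom and is an LH-model.

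The PS case follows verbatim, using the second half of \cref{lem:change_of_base_for_formulae} for dual geometric formulae. The only point needing care, and so the main potential obstacle, is making sure the lemma's ``preserves the interpretation'' really gives equality of sublocales for composite formulae and not just for atomic ones. To check this, I would run an induction on formula structure. Atomic relations and equality are preserved by the definition of $\Phi^\ast M$ and by the fact that diagonals are stable under pullback. Finite meets and arbitrary joins of opens (respectively, arbitrary meets and finite joins of closeds) are preserved by pullback. Existentials are preserved because images along open (respectively, proper) projections are stable under pullback. This is exactly the content of the proof of \cref{lem:change_of_base_for_formulae}.
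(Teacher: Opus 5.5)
Your proposal is correct and is essentially the paper's own argument: invoke \cref{lem:change_of_base_for_formulae} to get $\phi^{\Phi^\ast M} = \Phi^\ast \phi^M$ for every (dual) geometric formula, then use monotonicity of pulling back sublocales to transfer each axiom inclusion $\varphi^M \subseteq \psi^M$ to $\Phi^\ast M$, with the PS case handled identically. The induction on formulae you sketch at the end is exactly the content of the lemma's proof, so it does not need to be repeated here.
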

\begin{proof}
    Suppose $M$ is an LH-model of $\theory$.  By \cref{lem:change_of_base_for_formulae}, $\Phi^\ast$ preserves the interpretation of a geometric formula, i.e.\ $\phi^{\Phi^\ast M}  = \Phi^\ast \phi^M $ for any geometric $\phi$.  As $M$ is a $\theory$-model, for each axiom $\phi(x_1, \dots, x_n) \vdash_{x_1 : A_1, \dots, x_n : A_n} \psi(x_1, \dots , x_n)$ of $\theory$, we have that $\phi^M \subseteq \psi^M$.  Hence, $\phi^{\Phi^\ast M} = \Phi^\ast \phi^M \subseteq \Psi^\ast \psi^M = \psi^{\Phi^\ast M}$, and so $\Phi^\ast M$ is a $\theory$-model as well.  The argument for PS-models of dual geometric theories is identical.
\end{proof}
Thus, for each geometric theory $\theory$ and dual geometric theory $\theory'$, by restricting $\Sstruct(-)$ we arrive at the pseudofunctors
\[
\Tmod^\rmLH(-) \colon \LocCat\op \to \CAT \text{ and } \Tpmod^\rmPS(-) \colon \LocCat\op \to \CAT.
\]
Note also that if an internal natural transformation $\tau \colon \Phi \to \Psi$ has an inverse (as is the case if the codomain $\Phi , \Psi \colon \G \rightrightarrows \H$ is a localic groupoid), then the induced morphism $\tau^\ast_M$ is invertible too.  Thus, we also have pseudofunctors
\[\Tmod^\rmLH_{\cong}(-) \colon \LocGrpd\op \to \CAT \text{ and }\Tmod^\rmPS_{\cong}(-) \colon \LocGrpd\op \to \CAT . \]

\subsection{The generic bundle of a theory}\label{subsec:generic_model}
We saw in \cref{ex:models_over_1} and \cref{ex:PS_models_over_1} how it is relatively simple to describe all the LH-models and PS-models over the one-point space.  We now consider a more involved example where, for each geometric theory $\theory$ and each dual geometric theory $\theory'$, we construct:
\begin{enumerate}
    \item a localic category $\ReprG^\rmLH(\theory)$ equipped with an LH-model $\ReprE^\rmLH(\theory)$ of $\theory$,
    \item and a localic category $\ReprG^\rmPS(\theory')$ equipped with a PS-model $\ReprE^\rmPS(\theory')$ of $\theory'$.
\end{enumerate}
We will eventually show in \cref{sec:univ_prop} that the models we construct are \emph{generic} in that any other LH-model (respectively, PS-model) can be obtained by `pulling back' the generic model $\ReprE^\rmLH(\theory)$ (resp., $\ReprE^\rmPS(\theory')$) along an anafunctor.

The idea behind the construction has been prefigured in \cref{sec:forcing}: since there is a sense in which `every set is subcountable', and `every compact Hausdorff space is a subquotient of the Cantor space', we can consider a localic category whose objects are models presented as subquotients of a fixed locale $P$.  For LH-models, this localic category, or more precisely its groupoid coreflection, was constructed in \cite{manuell2023representing}.  The generic model over the localic category is obtained by taking the bundle whose fibre over an object, i.e.\ a model presented as a subquotient of $P$, is given by the subquotient in question.  A similar idea, in the topological case, is pursued in \cite{breinerthesis}.

We construct the generic bundles of geometric theories and dual geometric theories as follows:
\begin{enumerate}
    \item First, we construct the locales of objects $\ReprG^\rmLH(\theory)_0$ and $\ReprG^\rmPS(\theory')_0$, as well as the generic bundles $\ReprE^\rmLH(\theory)$ and $\ReprE^\rmPS(\theory')$ and demonstrate that these are indeed LH-models and PS-models respectively.
    \item Next, we construct the localic category structure by describing the locales of arrows $\ReprG^\rmLH(\theory)_1$ and $\ReprG^\rmPS(\theory')_1$, and their actions on the bundles $\ReprE^\rmLH(\theory)$ and $\ReprE^\rmPS(\theory')$.
    \item Finally, we describe what new relations must be added to the locales $\ReprG^\rmLH(\theory)_1$ and $\ReprG^\rmPS(\theory')_1$ to obtain the core groupoid.
\end{enumerate}

To emphasise the inherent duality of the local homeomorphism case and the proper separated case, we will define the classifying localic category and the generic bundle for each simultaneously.
\begin{definition}[{cf.\ \cite[\S 2]{manuell2023representing}}]\label{def:repr-grpd-LH}
Let $J$ denote either $\rmLH$ or $\rmPS$.
Let $\theory$ be a geometric first-order theory if we set $J$ to be $\rmLH$ or a dual geometric first-order theory if we set $J$ to be $\rmPS$. Define $P^\rmLH$ to be the discrete locale of natural numbers $\N$ and $P^\rmPS$ to be the compact Hausdorff locale $2^\N$.  We will denote by $\ReprG^J(\theory)_0$ the locale with the following generalised frame presentation (see \cref{sec:gen_prens}). For $J = \rmLH$, this is an ordinary presentation, while for $J = \rmPS$, this is a closed-type generalised presentation.
\begin{enumerate}
  \item For each sort $A$ of $\theory$, there is a basic proposition ${[p \sim^A q]}$ for each $p,q \in P^J$ together with the following axioms for each $p,q,r \in P^J$:
  \begin{align*}
   {[p \sim^A q]} &\vdash {[q \sim^A p]}, \\
   {[p \sim^A q]} \land {[q \sim^A r]} &\vdash {[p \sim^A r]}.
   \end{align*}
  \item For each relation symbol $R \subseteq A_1 \times \dots \times A_k$ of $\theory$, and for each $p_1,\dots,p_k \in P^J$ and $q_1,\dots,q_k \in P^J$, we have a basic proposition ${[(p_1,\dots,p_k) \in R]}$ and axioms
  \begin{align*}
      {[(p_1,\dots,p_k) \in R]} \land {[p_1 \sim^{A_1} q_1]} \land \dots \land {[p_k \sim^{A_k} q_k]} & \vdash {[(q_1,\dots,q_k) \in R]},  \\
      {[(p_1,\dots,p_k) \in R]} & \vdash {[p_1 \sim^{A_1} p_1]} \land \dots \land {[p_k \sim^{A_k} p_k]}.
   \end{align*}
  \item For each axiom $\phi \vdash_{x_1\colon A_1,\dots,x_k\colon A_k} \psi $ of $\theory$, we add an axiom 
  \[\bigwedge_{i=1}^k {[p_i \sim^{A_i} p_i]} \wedge \phi_{p_1,\dots,p_k} \vdash \psi_{p_1,\dots,p_k}\]
  for each $p_1, \dots , p_k \in P^J$, where for a formula $\chi(x_1, \dots , x_k)$ we define $\chi_{p_1, \dots, p_k}$ inductively as follows:
  \begin{enumerate}
      \item If $\chi$ is a basic relation $R(x_1, \dots , x_k)$, then $\chi_{p_1, \dots , p_k}$ is defined to be the generator $[(p_1, \dots , p_k) \in R]$.
      \item If $\chi \equiv (x_1 =_A x_2)$ is the equality predicate of the sort $A$, then $\chi_{p_1,p_2}$ is defined to be the generator $[p_1 \sim^A p_2]$.
      \item If $\chi$ is a conjunction $\bigwedge_{i\in I} \chi_i$ or a disjunction $\bigvee_{i \in I} \chi_i$, then $\chi_{p_1, \dots , p_k}$ is defined as $\bigwedge_{i\in I} (\chi_i)_{p_1, \dots , p_k}$ or $\bigvee_{i \in I} (\chi_i)_{p_1, \dots , p_k}$ respectively.
      \item If $\chi \equiv \exists x_{k+1} \chi' (x_1, \dots , x_k, x_{k+1})$, then we define $\chi_{p_1, \dots , p_k}$ as the $P^J$-indexed join $\bigvee_{p_{k+1} \in P^J} \chi'_{p_1, \dots , p_{k+1}}$.
  \end{enumerate}
  \end{enumerate}
\end{definition}
\begin{example}[Object classifier]
    Consider the theory of objects $\Obj$, the theory with one sort, no relation symbols and no axioms.  Viewing $\Obj$ as a geometric theory, we can apply the construction of \cref{def:repr-grpd-LH} with $J = \rmLH$, which results in the familiar classifying locale of partial equivalence relations on $\N$.

    Now viewing $\Obj$ as a dual geometric theory and taking $J = \rmPS$ in \cref{def:repr-grpd-LH}, we obtain the classifying locale of closed partial equivalence relations on $2^\N$,
    as explicitly described in \cref{ex:closed_partial_equiv_rels}.
\end{example}
\begin{definition}\label{def:generic-LH-bundle}
   Let $\ReprE^J(\theory)$ denote the following $\Sigma$-structure over $\ReprG^J(\theory)_0$.
   \begin{enumerate}
       \item For each sort $A$ of $\Sigma$, take $A^{\ReprE^J(\theory)}$ to be the locale with the generalised presentation obtained by taking a copy of the generalised presentation for $\ReprG^J(\theory)_0$ and, for each $p \in P^J$, adding a new generator $[\equiv p]$, and the relations
       \begin{align*}
        [\equiv p] \land [p \sim q] \dashv\! & \vdash [\equiv q] \land [p \sim q], \\
        [\equiv p] \land [\equiv q] & \vdash [p \sim q], \\
         & \vdash \bigvee_{p \in P^J} [\equiv  p], \\
        [\equiv p] & \vdash [p \sim p].
    \end{align*}
    The interpretation of the sort $A$ is given by the obvious locale morphism 
    \[\rho_A \colon A^{\ReprE^J(\theory)} \to \ReprG^J(\theory)_0\]
    that acts on a generalised frame generator of $\ReprG^J(\theory)_0$ by sending it to the corresponding generator of $A^{\ReprE^J(\theory)}$.

    \item Let $R \subseteq A_1 \times \dots \times A_\ell$ be a relation symbol.  Observe that the wide pullback $A_1^{\ReprE^J(\theory)} \times_{\ReprG^J(\theory)_0} \dots \times_{\ReprG^J(\theory)_0} A_\ell^{\ReprE^J(\theory)}$ has a frame presentation consisting of one copy of the presentation for $\ReprG^J(\theory)_0$ and $\ell$ copies of the generators $[\equiv_j p]$, as well as corresponding relations for $A_j^{\ReprE^J(\theory)}$ as above.  The interpretation of the relation symbol $R$ is the (open/closed) sublocale $R^{\ReprE^J(\theory)} \hookrightarrow A_1^{\ReprE^J(\theory)} \times_{\ReprG^J(\theory)_0} \dots \times_{\ReprG^J(\theory)_0} A_\ell^{\ReprE^J(\theory)}$ given by adding the relation
    \[
     \vdash \! \bigvee_{(p_1,\dots, p_\ell) \in (P^J)^\ell} \!\!\! [(p_1, \dots , p_\ell) \in R] \land [\equiv_1 p_1] \land \dots \land [\equiv_\ell p_\ell].
    \]
    to this presentation of $ A_1^{\ReprE^J(\theory)} \times_{\ReprG^J(\theory)_0} \dots \times_{\ReprG^J(\theory)_0} A_\ell^{\ReprE^J(\theory)}$.
   \end{enumerate}
\end{definition}

\begin{remark}\label{rem:points_of_generic_bundle}
    Recall from \cite[Remark 2.3]{manuell2023representing} that the points of $\ReprG^\rmLH(\theory)_0$ correspond to models of $\theory$ where each sort is given by a subquotient of the natural numbers (as encoded by a partial equivalence relation).  Given such a point, i.e.\ a model $\N /{\sim} \vDash \theory$, a point of $\ReprE^\rmLH(\theory)$ in the fibre of $\N /{\sim} \vDash \theory$ is an equivalence class $\overline{n} \in \N/{\sim}$ of the partial equivalence relation. This intuition will be made precise in \cref{prop:each-generic-sort-is-LH}.

    On the other hand, a point of $\ReprG^\rmPS(\theory)$ corresponds to a $\theory$-model where each sort is presented as a subquotient of $2^\N$ (as encoded by a closed partial equivalence relation) where moreover each relation is interpreted by a closed sublocale.  Similarly, the fibre over such a point $2^\N / {\sim} \vDash \theory$ in the generic bundle $\ReprE^\rmPS(\theory)$ is an equivalence class $\overline{s} \in 2^\N / {\sim}$ of the closed partial equivalence relation.
\end{remark}
\begin{remark}
    Note that, for a general dual geometric theory $\theory'$, because the sorts, relations symbols and axioms of the theory $\theory'$ are set-indexed, i.e.\ we have a \emph{set} of sorts, a \emph{set} of relation symbols, a \emph{set} of axioms, the presentations of the locales $\ReprG^\rmPS(\theory')_0$ and $\ReprE^\rmPS(\theory')$ given above are not \emph{dual} presentations, but rather \emph{closed-type} generalised presentations as discussed in \cref{rem:beyond_dual}.  For instance, if $\theory'$ is a single-sorted dual geometric theory with a single unary predicate of arity $n$, then the locale on the generators, of which $\ReprG^\rmPS(\theory')_0$ is a sublocale, is given by $\Srpnsk^{2^\N \times {(2^\N)}^n}$.  It is also possible to consider a variant of dual geometric logic where we instead take a `compact-Hausdorff-locale-indexed collection' of sorts/relation symbols/axioms, in which case applying \cref{def:repr-grpd-LH} would yield a true dual frame presentation.
\end{remark}

\begin{proposition}\label{prop:each-generic-sort-is-LH}
    The locale morphism $\rho_A \colon A^{\ReprE^J(\theory)} \to \ReprG^J(\theory)_0$ is the externalisation of the (discrete/compact-Hausdorff) subquotient of $P^J$ induced by the partial equivalence relation on $P^J$ corresponding to $A$, internal to $\Sh(\ReprG^J(\theory)_0)$. Consequently, for $J = \rmLH$, $\rho_A$ is a local homeomorphism, and for $J = \rmPS$, $\rho_A$ is proper and separated.
\end{proposition}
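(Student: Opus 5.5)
Work internally in $\Sh(B)$, where $B = \ReprG^J(\theory)_0$. By construction $B$ carries a generic partial equivalence relation $\sim^A$ on (the lifting of) $P^J$. For $J=\rmLH$ this is an open relation on the discrete object $\underline{\N}$, i.e.\ an internal partial equivalence relation on a set. For $J=\rmPS$ it is a closed partial equivalence relation on the internal compact Hausdorff locale $\underline{2^\N}$, the product projection $B\times 2^\N\to B$.

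First identify this relation precisely: it is the sublocale of $P^J\times P^J\times B$ presented by adding the relation $\vdash\bigvee_{p,q}[=p]\wedge[=q]\wedge[p\sim q]$. Since it is symmetric and transitive, we can form the internal subquotient $Q=\{x\mid x\sim x\}/{\sim}$.
\begin{itemize}
\item For $J=\rmLH$ this is an internal set, so its externalisation is a local homeomorphism over $B$.
\item For $J=\rmPS$, the object $\{x\mid x\sim x\}$ is a closed sublocale of a compact Hausdorff locale, and ${\sim}$ is a closed equivalence relation on it. Its quotient is compact Hausdorff because $\KHausLoc$ is a pretopos (internally). Its externalisation is therefore proper separated over $B$.
\end{itemize}
This yields the final sentence of the statement, so the main task is to show that the externalisation of $Q$ is $\rho_A$.

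To compare, give a canonical presentation of $Q$ internally. A point of $Q$ is an equivalence class $C\subseteq P^J$, and $C$ is an (open/closed) subset satisfying:
\begin{itemize}
\item if $p\in C$ and $p\sim q$ then $q\in C$;
\item if $p,q\in C$ then $p\sim q$;
\item $C$ is inhabited;
\item if $p\in C$ then $p\sim p$.
\end{itemize}
These are exactly the relations imposed on the generators $[\equiv p]$ in \cref{def:generic-LH-bundle}. For $J=\rmPS$, inhabitedness is read as the compact-indexed join $\vdash\bigvee_p[\equiv p]$.

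So one shows internally that $Q$ is presented by generators $P^J$ subject to these relations. The method mirrors \cref{lem:canonical_prens}:
\begin{itemize}
\item Apply the canonical presentation of the compact Hausdorff (resp.\ discrete) locale $Q$, which expresses a point of $Q$ as a closed (resp.\ open) singleton of $Q$.
\item Transport along the proper (resp.\ open) surjection $\{x\mid x\sim x\}\twoheadrightarrow Q$ and the inclusion into $P^J$, as in \cref{lem:expanded_presentation}. This replaces a singleton of $Q$ by its full preimage in $P^J$, which is an equivalence class.
\end{itemize}
The relations produced this way are, after simplification, the four listed above.

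Finally externalise. By \cref{rem:presentation_base_change}, a generalised presentation interpreted in $\Sh(B)$ externalises to the presentation over $B$ obtained by adjoining the presentation of $B$ to the new generators. Internally, the parameters $[p\sim q]$ are global elements of $\underline{\Srpnsk}^{P^J\times P^J}$. This yields exactly the presentation of $A^{\ReprE^J(\theory)}$, with structure map $\rho_A$.

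The main obstacle is the middle step in the PS case. One must check that the relations obtained from \cref{lem:expanded_presentation} reduce to the stated ones, and in particular that "the preimage of a singleton is a class" is captured by the first two relations. One must also check that internal properness and compactness of joins are handled correctly when working constructively. I would first verify this on generalised points by a direct computation of closed subsets, using Frobenius reciprocity (\cref{rem:distributivity_over_generalised_joins}) to move the parameters $[p\sim q]$ past the $P^J$-indexed join.
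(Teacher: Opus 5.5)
Your proposal is correct and follows essentially the same route as the paper: form the internal subquotient $P^J/{\sim}$ in $\Sh(\ReprG^J(\theory)_0)$, apply its canonical self-presentation (\cref{lem:canonical_prens}), re-index generators and relations over $P^J$ via \cref{lem:expanded_presentation}, simplify to the four relations on $[\equiv p]$, and externalise using that change of base preserves exponentials. The paper likewise omits the detailed simplification you flag as the main obstacle, so your plan to check it on generalised points using Frobenius reciprocity is a reasonable way to fill in that step.
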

\begin{proof}
    Since the theory $\ReprG^J(\theory)_0$ classifies contains a copy of the theory of (open/closed) partial equivalence relations on $P^J$ (corresponding to the sort $A$), the topos $\Sh(\ReprG^J(\theory)_0)$ contains a \emph{generic} (open/closed) partial equivalence relation on $P^J$, which we call $\sim$. (Explicitly, we can construct this by taking the open/closed sublocale of $\ReprG^J(\theory)_0 \times P^J \times P^J$ cut out by $\vdash \bigvee_{p,q \in P^J} [p \sim q] \wedge [\equiv_1 p] \wedge [\equiv_2 q]$ and equipped with the projection down to $\ReprG^J(\theory)_0$.)

    Inside the sheaf topos $\Sh(\ReprG^J(\theory)_0)$, we can now compute the subquotient $P^J/{\sim}$, which is of course, an (internal) discrete locale for $J = \rmLH$ and an (internal) compact Hausdorff locale for $J = \rmPS$. We can use the canonical self-presentation (see \cref{sec:canonical_presentation}) of $P^J/{\sim}$ to give an internal presentation with generators $P^J/{\sim}$ and relations $(P^J/{\sim})^2 \sqcup 1$ (this is an ordinary presentation for $J = \rmLH$ and a dual presentation for $J = \rmPS$).
    Explicitly, we have a generator $[= [p]]$ for each $[p] \in P^J/{\sim}$ and relations
    \begin{align*}
        [= [p]] \wedge [= [q]] &\vdash [p] = [q] & \text{for $[p],[q] \in P^J/{\sim}$} \\
        \top &\vdash \bigvee_{[p] \in P^J/\sim} [= [p]]
    \end{align*}
    This presentation would be a little tricky to externalise naively. Luckily, we can massage it into a form which externalises more readily: by \cref{lem:expanded_presentation} we may replace this with an internal presentation with generators $P^J$ and relations indexed by $(P^J)^2 \sqcup 1 \sqcup (P^J)^2 \sqcup P^J$. Hence, we have an equaliser diagram
    \[P^J/{\sim} \hookrightarrow \Srpnsk^{P^J} \rightrightarrows \Srpnsk^{(P^J)^2 \sqcup 1 \sqcup (P^J)^2 \sqcup P^J}\]
    in the category of internal locales of $\Sh(\ReprG^J(\theory)_0)$.
    
    We omit the detailed calculations, but writing $[\equiv p]$ (with $p \in P^J$) for the generators, the relations can be expressed as follows (where we use double square brackets to denote the embedding of the truth value of a proposition/the closed complement of a proposition into the frame). %
    \begin{align*}
        \llbracket p \sim p\rrbracket \wedge \llbracket q \sim q\rrbracket \wedge {} \qquad\qquad\qquad\qquad \\ \bigvee_{p',q' \in \N} \llbracket p \sim p'\rrbracket \wedge \llbracket q \sim q'\rrbracket \wedge [\equiv p'] \wedge [\equiv q'] 
        &\vdash \llbracket p \sim q \rrbracket & \text{for $p,q \in P^J$} \\
        \top &\vdash \bigvee_{p \in P^J} \llbracket p \sim p \rrbracket \wedge [\equiv p] \\
        \llbracket p \sim q \rrbracket \wedge [\equiv p] \dashv\!& \vdash \llbracket p \sim q \rrbracket \wedge [\equiv q] & \text{for $p,q \in P^J$} \\
        [\equiv p] &\vdash \llbracket p \sim p \rrbracket  & \text{for $p \in P^J$}
    \end{align*}
    The final two relations (and the axioms of a partial equivalent relation) then allow us to simplify the first two relations to give
    
    \begin{align*}
        [\equiv p] \wedge [\equiv q] &\vdash \llbracket p \sim q \rrbracket & \text{for $p,q \in P^J$} \\
        \top &\vdash \bigvee_{p \in P^J} [\equiv p] \\
        \llbracket p \sim q \rrbracket \wedge [\equiv p] \dashv\!& \vdash \llbracket p \sim q \rrbracket \wedge [\equiv q] & \text{for $p,q \in P^J$} \\
        [\equiv p] &\vdash \llbracket p \sim p \rrbracket  & \text{for $p \in P^J$}.
    \end{align*}

    Since change of base preserves locale exponentials, the equaliser above externalises to an equaliser
    \[{\bullet} \hookrightarrow \Srpnsk^{P^J} \times \ReprG^J(\theory)_0 \rightrightarrows \Srpnsk^{(P^J)^2 \sqcup 1 \sqcup (P^J)^2 \sqcup P^J} \times \ReprG^J(\theory)_0.\]
    in the category of (external) locales.
    It is easy to externalise the above relations by simply changing the internal truth value (or complement of a truth value) $\llbracket {\cdot} \sim {\cdot} \rrbracket$ into the generator $[ {\cdot} \sim {\cdot} ]$ of $\ReprG^\rmLH(\theory)_0$.  Combining this with the presentation for $\ReprG^J(\theory)_0$ gives a final presentation for the externalised bundle. This is precisely the presentation we gave for $A^{\ReprE^J(\theory)}$ in \cref{def:generic-LH-bundle} above.
\end{proof}
\begin{remark}
    In the above proof, we used an argument that dualises readily to cover both the local homeomorphism and proper separated case. Here we provide an alternative, more `ad hoc' proof that $\rho_A$ is a local homeomorphism which requires less sophisticated technology, but does not generalise to the proper separated case. This proof utilises the characterisation of local homeomorphisms in terms of restrictions to an open cover --- namely, $\rho_A$ is a local homeomorphism if and only if there exists an open cover $\mathcal{U}$ of $A^{\ReprE^\rmLH(\theory)}$ such that, for each $U \in \mathcal{U}$, the map $\rho_A$ restricts to an isomorphism between open sublocales
    \[\begin{tikzcd}
	U & {A^{\ReprE^\rmLH(\theory)}} \\
	V & {\ReprG^\rmLH(\theory)_0}.
	\arrow[hook, from=1-1, to=1-2]
	\arrow[dashed, from=1-1, to=2-1]
	\arrow["\sim"{marking, allow upside down}, shift right = 2, draw=none, from=1-1, to=2-1]
	\arrow["{\rho_A}", from=1-2, to=2-2]
	\arrow[hook, from=2-1, to=2-2]
    \end{tikzcd}\]
    \begin{proof}[Alternative proof]
    The generators $[\equiv n]$ constitute an open covering of $A^{\ReprE^\rmLH(\theory)}$ by the relation $\top \vdash \bigvee_{n \in \N} [\equiv n]$. Let $U$ be the open sublocale of $A^{\ReprE^\rmLH(\theory)}$ corresponding to the open $[\equiv n]$ and let $V$ be the open sublocale of $G^\rmLH(\theory)_0$ corresponding to the open $[n \sim^A n]$.

    We start by showing the map $\rho_A$ restricts to a map $p\colon U \to V$.
    Since the sublocales $U$ and $V$ are cut out by the relations $\vdash [\equiv n]$ and $\vdash [n \sim^A n]$ respectively,
    it suffices to show that, given $\vdash [\equiv n]$, we may deduce $\vdash \rho_A^*([n \sim^A n])$. Recall that the frame homomorphism $\rho_A^*$ sends $[n \sim^A n] \in \O G^\rmLH(\theory)_0$ to the corresponding generator of $\O A^{\ReprE^\rmLH(\theory)}$ and so the desired deduction follows from the relation $[\equiv n] \vdash [n \sim^A n]$.
    
    We now define a frame homomorphism $q^\ast \colon \O U \to \O V$ which is our candidate inverse to $p^*$. The map $q^*$ sends $[\equiv n']$ to $[n \sim^A  n']$ and the remaining frame generators of $\O U$ to the corresponding generators of $\O V$. Note that this indeed defines a frame homomorphism by the transitivity and symmetry axioms for $\sim^A$.
    To show that $p^\ast$ and $q^\ast$ are mutually inverse, it suffices to show that both composites $p^\ast q^\ast$ and $q^\ast p^\ast$ act as the identity on all frame generators.  For the composite $q^\ast p^\ast$, this is immediate.  For $p^\ast q^\ast$, the only non-trivial case is the generator $[\equiv n']$. This is sent under $p^\ast q^\ast$ to $[n \sim^A  n']$, which in the presence of the relations $\top \vdash [\equiv n]$, $[\equiv n] \land [n \sim^A  n'] \dashv \vdash [\equiv n'] \land [n \sim^A  n']$ and $[\equiv n] \wedge [\equiv n'] \vdash [n \sim^A n']$, is evidently equivalent to $[\equiv n']$, completing the proof.
    \end{proof}
\end{remark}

We now show that the generic bundles described above in the local homeomorphic and proper separated case are indeed, respectively, LH-models and PS-models, in the sense of \cref{def:model_over_groupoid}. It is clear they are $\Sigma$-structures of the appropriate form. It remains to prove that that there is an inclusion of the interpretation of formulae $\phi^{E^J(\theory)} \subseteq \psi^{E^J(\theory)}$ for each axiom $\phi \vdash_{x_1 : A_1, \dots , x_\ell : A_\ell} \psi$ of $\theory$.  To do so, we demonstrate that a geometric (or a dual geometric) formula $\phi$ is interpreted in $\ReprE^J(\theory)$ by the sublocale $\phi^{\ReprE^J(\theory)} \subseteq A_1^{\ReprE^J(\theory)} \times_{\ReprG^J(\theory)_0} \dots \times_{\ReprG^J(\theory)_0} A_\ell^{\ReprE^J(\theory)}$ obtained by adding to the generalised presentation the relation
    \[\top \vdash \bigvee_{p_1, \dots , p_\ell \in P^J} \phi_{p_1, \dots , p_\ell} \land [\equiv_1 p_1] \land \dots \land [\equiv_\ell p_\ell],\]
    where $\phi_{p_1,\dots,p_\ell}$ is the generator in the presentation of $A_1^{\ReprE^J(\theory)} \times_{\ReprG^J(\theory)_0} \dots \times_{\ReprG^J(\theory)_0} A_\ell^{\ReprE^J(\theory)}$ described in \cref{def:repr-grpd-LH}\footnote{Here we are conflating the open/closed combination of generators $\phi_{p_1, \dots , p_\ell}$ in the presentation of $\ReprG^J(\theory)_0$ with its image under $(\rho_{A_1} \times_{\ReprG^J(\theory)_0} \dots \times_{\ReprG^J(\theory)_0} \rho_{A_\ell})^\ast$.}.  
    For ease of understanding, this is done separately for $\rmLH$-{} and $\rmPS$-models in \cref{prop:generic_bundle_is_LH_model} and \cref{prop:generic_bundle_is_PS_model} below.
\begin{proposition}\label{prop:generic_bundle_is_LH_model}
    Let $\theory$ be a geometric theory.  The $\Sigma$-structure $\ReprE^\rmLH(\theory)$ described above is an LH-model of $\theory$ over $\ReprG^\rmLH(\theory)_0$.
\end{proposition}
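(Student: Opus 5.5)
The plan follows the strategy announced just before the statement. Write $E = \ReprE^\rmLH(\theory)$, $G_0 = \ReprG^\rmLH(\theory)_0$, and for a context $x_1\colon A_1,\dots,x_\ell\colon A_\ell$ write $W = A_1^{E} \times_{G_0} \dots \times_{G_0} A_\ell^{E}$. For each geometric formula $\phi$ in that context, let $S_\phi \subseteq W$ be the open sublocale
\[S_\phi = \bigvee_{p_1,\dots,p_\ell \in \N} \phi_{p_1,\dots,p_\ell} \land [\equiv_1 p_1] \land \dots \land [\equiv_\ell p_\ell].\]
By \cref{prop:each-generic-sort-is-LH}, each $\rho_A$ is a local homeomorphism, and relation symbols are interpreted by open sublocales by construction. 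So $E$ is an LH-structure, and it remains only to verify the axioms.

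The key claim is $\phi^{E} = S_\phi$, proved by induction on $\phi$. Throughout, I will use the relation $[\equiv_j p] \land [p \sim q] \dashv\vdash [\equiv_j q] \land [p\sim q]$ together with the substitution axioms of \cref{def:repr-grpd-LH}(2). These show that $\phi_{\vec p} \land \bigwedge_j [\equiv_j p_j]$ is invariant under replacing $p_j$ by any $\sim$-related $q_j$; the invariance for compound $\phi$ follows by a side induction.

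The induction runs as follows.
\begin{enumerate}
\item For a relation symbol, $\phi^E = S_\phi$ holds by the definition in \cref{def:generic-LH-bundle}.
\item For equality $x_1 = x_2$, I must show that the diagonal of $A^E$ over $G_0$ is $\bigvee_{p,q}[p\sim q]\land[\equiv_1 p]\land[\equiv_2 q]$. The inclusion of the diagonal into this open follows from $\top\vdash\bigvee_p[\equiv p]$ and $[\equiv p]\vdash[p\sim p]$. The converse follows from \cref{prop:each-generic-sort-is-LH}: internally $A^E$ is the subquotient $\N/{\sim}$, and the points $[p],[q]$ coincide exactly when $p\sim q$. Alternatively, one can compute with the presentations that on this open the two generators $[\equiv_1 r]$, $[\equiv_2 r]$ agree.
\item Finite meets and arbitrary joins are immediate from frame distributivity. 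For the meet I also need $[\equiv_j p]\land[\equiv_j p']\vdash[p\sim p']$ together with invariance to collapse the double index to a single one.
\item For $\exists x_{\ell+1}\,\chi$, I must show that the image of $S_\chi$ under the open projection $\pi\colon W\times_{G_0}A_{\ell+1}^E \to W$ is $\bigvee_{\vec p} \bigvee_{p_{\ell+1}} \chi_{\vec p,p_{\ell+1}} \land \bigwedge_{j\le \ell}[\equiv_j p_j]$. Since $\pi_!$ preserves joins and satisfies Frobenius, this reduces to computing
\[\pi_!(\chi_{\vec p,p_{\ell+1}}\land[\equiv_{\ell+1}p_{\ell+1}]) = \chi_{\vec p,p_{\ell+1}} \land \pi_!([\equiv_{\ell+1} p_{\ell+1}]),\]
where $\chi_{\vec p,p_{\ell+1}}$ comes from the base. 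One then needs $\pi_!([\equiv p]) = [p\sim p]$, which is where the alternative proof of \cref{prop:each-generic-sort-is-LH} helps: $\rho_A$ restricts to an isomorphism from $[\equiv p]$ onto $[p\sim p]$, so its image is exactly $[p\sim p]$. Finally, $\chi_{\ldots}\vdash[p_{\ell+1}\sim p_{\ell+1}]$ holds by the second relation in \cref{def:repr-grpd-LH}(2), extended inductively to compound formulas (for $\exists$ this uses the corresponding clause for the bound variable). So the factor $[p\sim p]$ is absorbed.
\end{enumerate}

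With the claim established, verifying an axiom $\phi\vdash_{\vec x}\psi$ is direct. From the axiom clause $\bigwedge_i[p_i\sim p_i]\land\phi_{\vec p}\vdash\psi_{\vec p}$ of \cref{def:repr-grpd-LH}(3) and $[\equiv_i p_i]\vdash[p_i\sim p_i]$, each joinand of $S_\phi$ lies below the corresponding joinand of $S_\psi$. Hence $\phi^E = S_\phi \subseteq S_\psi = \psi^E$.

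I expect the main obstacle to be the existential step, specifically identifying $\pi_!$ of the generators and making the Frobenius argument rigorous. This is because the base change of $G_0$-opens into $W$ must be tracked through the presentations. The equality case is a secondary difficulty; I would handle it via the internal description from \cref{prop:each-generic-sort-is-LH} rather than by raw computation.
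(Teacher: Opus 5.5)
Your overall plan matches the paper's: the same induction showing $\phi^E = S_\phi$, then the same comparison of joinands to verify each axiom. Your Frobenius computation at the existential step is also correct: the image of $S_\chi$ under $\pi$ is $\bigvee_{\vec p}\bigvee_{q}[q\sim q]\land\chi_{\vec p,q}\land\bigwedge_{j\le\ell}[\equiv_j p_j]$.

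The gap is the final absorption $\chi_{\vec p,q}\vdash[q\sim q]$, which is false for compound formulas. Your side induction breaks in three places:
\begin{itemize}
\item at $\top$ (the empty conjunction);
\item at a disjunction in which $x_{\ell+1}$ occurs in only some disjuncts;
\item whenever $x_{\ell+1}$ is not free in $\chi$.
\end{itemize}
Concretely, take $\phi\equiv\exists x{:}A.\,\top$ in the empty context. By the existential clause of \cref{def:repr-grpd-LH}, $S_\phi=\bigvee_q\top=\top$. On the other hand, $\phi^E$ is the image of $\rho_A$, namely $(\rho_A)_!\bigl(\bigvee_q[\equiv q]\bigr)=\bigvee_q[q\sim q]$. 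These differ at the point of $G_0$ given by the empty partial equivalence relation. So the identity $\phi^E=S_\phi$ is false for the translation as literally defined, and no argument can close this step. In fact, for the theory $\{\top\vdash\exists x{:}A.\,\top\}$ the axiom clause of \cref{def:repr-grpd-LH} reduces to $\top\vdash\top$. Hence the empty partial equivalence relation is a point of $G_0$ over which $A^E$ has empty fibre, and $E$ fails this axiom.

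The repair is to the definition, not to your strategy. Take
\[
(\exists x_{k+1}\,\chi')_{p_1,\dots,p_k}:=\bigvee_{q}[q\sim q]\land\chi'_{p_1,\dots,p_k,q},
\]
which is exactly what your Frobenius computation outputs. With this clause:
\begin{itemize}
\item the existential step needs no absorption at all;
\item your invariance side-induction still goes through, since single-variable substitution in a relation symbol can use $[(p_1,\dots,p_k)\in R]\vdash\bigwedge_i[p_i\sim p_i]$ to supply reflexivity of the unchanged indices;
\item your remaining cases and the final comparison of joinands are fine as written.
\end{itemize}

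For comparison, the paper's write-up runs the same induction but collapses the existential step by appealing to $\bigvee_{n_1}[\equiv_1 n_1]\equiv\top$. That appeal does not account for the factor $[n_1\sim n_1]$ that $\pi_!$ leaves behind, so it hides the same issue. Your explicit computation $\pi_!([\equiv p])=[p\sim p]$ is the accurate version of that step, and it is what exposes the missing guard.
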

\begin{proof}
    We claim that the interpretation of a geometric formula $\phi$ is the sublocale $\phi^{\ReprE^\rmLH(\theory)} \subseteq A_1^{\ReprE^\rmLH(\theory)} \times_{\ReprG^\rmLH(\theory)_0} \dots \times_{\ReprG^\rmLH(\theory)_0} A_\ell^{\ReprE^\rmLH(\theory)}$ obtained by adding the relation:
    \[\top \vdash \bigvee_{n_1, \dots , n_\ell \in \N} \phi_{n_1, \dots , n_\ell} \land [\equiv_1 n_1] \land \dots \land [\equiv_\ell n_\ell].\]
    We will argue by induction on the complexity of $\phi$.

    First, we must demonstrate that atomic formulae satisfy the inductive hypothesis.  If $\phi$ is a single relation symbol $R$, the claim is true by the definition of $R^{\ReprE^\rmLH(\theory)}$.
    
    The formula $(x=_A y)$ is interpreted in $\ReprE^\rmLH(\theory)$ by the (open) diagonal
    \[
    \Delta_{A^{\ReprE^\rmLH(\theory)}} \colon A^{\ReprE^\rmLH(\theory)} \hookrightarrow A^{\ReprE^\rmLH(\theory)} \times_{\ReprG^\rmLH(\theory)_0} A^{\ReprE^\rmLH(\theory)}.
    \]
    Note that $\Delta_{A^{\ReprE^\rmLH(\theory)}}^\ast$ acts by sending both of the generators $[\equiv_1 n], [\equiv_2 n] $ of $ A^{\ReprE^\rmLH(\theory)} \times_{\ReprG^\rmLH(\theory)_0} A^{\ReprE^\rmLH(\theory)}$ to $[\equiv n]$ and sending the remaining generators of the form $[(n_1, \dots , n_\ell) \in R]$ to the corresponding generators of $A^{\ReprE^\rmLH(\theory)}$.  Thus, $(x =_A y)^{\ReprE^\rmLH(\theory)}$, as a sublocale of $ A^{\ReprE^\rmLH(\theory)} \times_{\ReprG^\rmLH(\theory)_0} A^{\ReprE^\rmLH(\theory)}$, is obtained by adding, for all $n \in \N$, the relation $[\equiv_1 n] \dashv \vdash [\equiv_2 n]$.  We must show that an equivalent sublocale is obtained by adding the relation $\top \vdash \bigvee_{n,m \in \N} [n \sim^A m] \land [\equiv_1 n] \land [\equiv_2 m]$.

    Assume we have $[\equiv_1 n_1] \dashv \vdash [\equiv_2 n]$ for $n \in \N$, in addition to the other relations of $A^{\ReprE^\rmLH(\theory)} \times_{\ReprG^\rmLH(\theory)_0} A^{\ReprE^\rmLH(\theory)}$. Since we have $\top \vdash \bigvee_{n \in \N} [\equiv_1 n]$ and $\top \vdash \bigvee_{m \in \N} [\equiv_2 m]$, we find that $\top \vdash \bigvee_{n,m \in \N} {[\equiv_1 n]} \land {[\equiv_2 m]}$. Hence, by the assumption, $\top \vdash \bigvee_{n, m \in \N} [\equiv_1 n] \land [\equiv_1 m] \land [\equiv_2 m]$ and so $\top \vdash \bigvee_{n , m \in \N} [n \sim^A m] \land [\equiv_1 n] \land [\equiv_2 m]$. 
    
    Conversely, assuming $\top \vdash \bigvee_{n, m \in \N} [n \sim^A m] \land [\equiv_1 n] \land [\equiv_2 m] $, we obtain the derivation
    \begin{align*}
        [\equiv_1 n] & \vdash [\equiv_1 n] \land \!\!\bigvee_{n', m \in \N} [n' \sim^A m] \land [\equiv_1 n'] \land [\equiv_2 m], \\
        & \vdash \bigvee_{n' , m \in \N} [\equiv_1 n] \land [\equiv_1 n'] \land [n' \sim^A m] \land [\equiv_2 m], \\
        & \vdash \bigvee_{n',m\in \N} [n \sim^A n'] \land [n' \sim^A m] \land [\equiv_2 m], \\
        & \vdash \bigvee_{n', m \in \N} [n \sim^A m] \land [\equiv_2 m], \\
        & \vdash [\equiv_2 n],
    \end{align*}
    where we have used the relations $[n \sim^A n'] \land [n' \sim^A m] \vdash [n \sim^A m]$, $[\equiv_1 n] \land [\equiv_1 n'] \vdash [n \sim^A n']$ and $[n \sim^A m] \land [\equiv_2 m] \vdash [\equiv_2 n]$.  Via symmetry, we also obtain $[\equiv_2 n] \vdash [\equiv_1 n]$, completing the argument.

    Having argued for the base case of atomic formulae, we turn to formulae built using logical connectives.  Given formulae $\phi, \psi$, their conjunction $\phi \land \psi$ is interpreted by the intersection of sublocales
    \[\begin{tikzcd}
	{\phi^{\ReprE^\rmLH(\theory)} \cap \psi^{\ReprE^\rmLH(\theory)}} & {\psi^{\ReprE^\rmLH(\theory)}} \\
	{\phi^{\ReprE^\rmLH(\theory)}} & {A_1^{\ReprE^\rmLH(\theory)} \times_{\ReprG^\rmLH(\theory)_0} \dots \times_{\ReprG^\rmLH(\theory)_0} A_\ell^{\ReprE^\rmLH(\theory)}}
	\arrow[hook, from=1-1, to=1-2]
	\arrow[hook, from=1-1, to=2-1]
	\arrow[hook, from=1-2, to=2-2]
	\arrow[""{name=0, anchor=center, inner sep=0}, hook, from=2-1, to=2-2]
	\arrow["\lrcorner"{anchor=center, pos=0.125}, draw=none, from=1-1, to=0]
\end{tikzcd}\]
    which, by our inductive hypothesis, corresponds to the sublocale obtained by adding the relation
    \[
    \top \vdash \bigvee_{n_1, \dots , n_\ell \in \N} \phi_{n_1, \dots , n_\ell}
    \land [\equiv_1 n_1] \land \dots \land [\equiv_\ell n_\ell]
    \, \land \!\!\!\bigvee_{m_1 , \dots , m_\ell \in \N} \psi_{m_1 , \dots , m_\ell}
    \land [\equiv_1 m_1] \land \dots \land [\equiv_\ell m_\ell].
    \]
    Note that, in the presence of the other relations of $ A_1^{\ReprE^\rmLH(\theory)} \times_{\ReprG^\rmLH(\theory)_0} \dots \times_{\ReprG^\rmLH(\theory)_0} A_\ell^{\ReprE^\rmLH(\theory)}$, we have that
    \[
    \psi_{m_1 , \dots , m_\ell}  \land [\equiv_1 n_1] \land \dots \land [\equiv_\ell n_\ell] \land [\equiv_1 m_1] \land \dots \land [\equiv_\ell m_\ell ] \vdash \psi_{n_1 , \dots , n_\ell} .
    \]
Thus, there are equivalences
\begin{align*}
    & \bigvee_{n_1, \dots , n_\ell \in \N} \phi_{n_1, \dots , n_\ell} \land [\equiv_1 n_1] \land \dots \land [\equiv_\ell n_\ell] \, \land \!\!\! \bigvee_{m_1 , \dots ,m_\ell \in \N} \psi_{m_1 , \dots , m_\ell} \land [\equiv_1 m_1] \land \dots \land [\equiv_\ell m_\ell], \\
    \equiv & \bigvee_{n_1, \dots , n_\ell \in \N} \! \bigvee_{m_1 , \dots , m_\ell \in \N} \phi_{n_1, \dots , n_\ell} \land \psi_{m_1 , \dots , m_\ell} \land [\equiv_1 n_1] \land \dots \land [\equiv_\ell n_\ell]   \land [\equiv_1 m_1] \land \dots \land [\equiv_\ell m_\ell], \\
    \equiv & \bigvee_{n_1, \dots , n_\ell \in \N }  \phi_{n_1, \dots , n_\ell} \land \psi_{n_1, \dots , n_\ell} \land [\equiv_1 n_1] \land \dots \land [\equiv_\ell n_\ell] ,
\end{align*}
proving that $(\phi \land \psi)^{\ReprE^\rmLH(\theory)}$ is presented as a sublocale by adding the relation 
\[
\top \vdash  \bigvee_{n_1, \dots , n_\ell \in \N }  \phi_{n_1, \dots , n_\ell} \land \psi_{n_1, \dots , n_\ell} \land [\equiv_1 n_1] \land \dots \land [\equiv_\ell n_\ell],
\]
as desired.

    The formula $\bigvee_i \phi_i$ is interpreted as the union of sublocales $\bigcup_i \phi_i^{\ReprE^\rmLH(\theory)} \subseteq A_1^{\ReprE^\rmLH(\theory)} \times_{\ReprG^\rmLH(\theory)_0}  \dots \times_{\ReprG^\rmLH(\theory)_0}  A_\ell^{\ReprE^\rmLH(\theory)}$.  By our inductive hypothesis, $(\bigvee_i \phi_i)^{\ReprE^\rmLH(\theory)}$ is the sublocale defined by adding the relation
\begin{align*}
    \top & \vdash \bigvee_{i }  \bigvee_{n_1, \dots , n_\ell \in \N} (\phi_i)_{n_1, \dots , n_\ell} \land [\equiv_1 n_1] \land \dots \land [\equiv_\ell n_\ell], \\
    & \equiv \bigvee_{n_1, \dots , n_\ell \in \N} \left( \bigvee_i (\phi_i)_{n_1, \dots , n_\ell} \right) \land [\equiv_1 n_1] \land \dots \land [\equiv_\ell n_\ell],
\end{align*}
as desired.

Finally, the formula $\exists x_1 \in A_1.\ \chi(x_1,x_2, \dots, x_\ell) $ is interpreted as the image of the composite
    \begin{equation}\label{eq:composite_wedge_R_i_then_projection}
        \chi^{A^{\ReprE^\rmLH(\theory)}} \hookrightarrow A_1^{\ReprE^\rmLH(\theory)} \times_{\ReprG^\rmLH(\theory)_0} \cdots \times_{\ReprG^\rmLH(\theory)_0} A_\ell^{\ReprE^\rmLH(\theory)} \xrightarrow{\pi_{2,\dots,n}} A_2^{\ReprE^\rmLH(\theory)} \times_{\ReprG^\rmLH(\theory)_0} \cdots \times_{\ReprG^\rmLH(\theory)_0} A_\ell^{\ReprE^\rmLH(\theory)},
    \end{equation}
    where $\chi^{A^{\ReprE^\rmLH(\theory)}}$ is the sublocale obtained by adding the relation $\top \vdash \bigvee_{n_1, \dots, n_\ell \in \N} \chi_{n_1, \dots, n_\ell} \land {[\equiv_1 n_1]} \land \dots \land [\equiv_\ell n_\ell]$ by our inductive hypothesis. Recall from \cref{sec:dual} that taking the image along the map \eqref{eq:composite_wedge_R_i_then_projection} amounts to existentially quantifying over the first variable in the defining relation of $\chi^{A^{\ReprE^\rmLH(\theory)}}$.
    So $(\exists x_1 . A_1 \, \chi)^{\ReprE^\rmLH(\theory)}$ is obtained by taking the join
    \begin{align*}
        \top & \vdash \bigvee_{n_1 \in \N} \left( \bigvee_{n_1, \dots, n_\ell \in \N} \chi_{n_1, \dots, n_\ell} \land {[\equiv_1 n_1]} \land \dots \land [\equiv_\ell n_\ell] \right), \\ %
        & \equiv \bigvee_{n_2, \dots , n_\ell \in \N} \left( \bigvee_{n_1 \in \N} \chi_{n_1, \dots , n_\ell} \land [\equiv_1 n_1]\right) \land [\equiv_2 n_2] \land \dots \land [\equiv_\ell n_\ell], \\
        & \equiv \bigvee_{n_2, \dots , n_\ell \in \N} (\exists x_1.A_1 \, \chi)_{n_2, \dots , n_\ell}  \land [\equiv_2 n_2] \land \dots \land [\equiv_\ell n_\ell],
    \end{align*}
    where the last equivalence holds since $\bigvee_{n_1 \in \N} [\equiv_1 n_1] \equiv \top$ and by the definition of $(\exists x_1.A_1 \, \chi)_{n_2, \dots , n_\ell}$.
    
    Therefore, the interpretation of a geometric formula $\phi$ is the sublocale obtained by adding the relation $\top \vdash \bigvee_{n_1, \dots , n_\ell \in \N} \phi_{n_1, \dots , n_\ell} \land [\equiv_1 n_1] \land \dots \land [\equiv_\ell n_\ell]$ as claimed.  Thus, for each axiom $\phi \vdash_{x_1\colon A_1,\dots,x_k\colon A_k} \psi $ of $\theory$, by the axioms
  \[[\equiv_i n_i ] \vdash [n_i \sim^{A_i} n_i] \quad \text{ and } \quad \bigwedge_{i=1}^k {[n_i \sim^{A_i} n_i]} \wedge \phi_{n_1,\dots,n_k} \vdash \psi_{n_1,\dots,n_k}\]
    we deduce that $\phi^{\ReprE^\rmLH(\theory)} \subseteq \psi^{\ReprE^\rmLH(\theory)}$.  Combined with the fact that the carrier $\rho_A \colon A^{\ReprE^\rmLH(\theory)} \to \ReprG^\rmLH(\theory)_0$ is a local homeomorphism by \cref{prop:each-generic-sort-is-LH}, and the observation above that each basic relation is interpreted by an open sublocale, we deduce that $\ReprE^\rmLH(\theory)$ is indeed an LH-model of $\theory$ as desired.
\end{proof}

\begin{proposition}\label{prop:generic_bundle_is_PS_model}
    Let $\theory'$ be a dual geometric theory.  The $\Sigma$-structure $\ReprE^\rmPS(\theory')$ from above is a PS-model of $\theory'$ over $\ReprG^\rmPS(\theory')_0$.
\end{proposition}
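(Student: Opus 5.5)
I will follow the proof of \cref{prop:generic_bundle_is_LH_model} step by step, dualising each step. Throughout, I read presentations in the closed orientation of \cref{rem:beyond_dual}. A relation $\top \vdash \Theta$ now cuts out a \emph{closed} sublocale. Joins indexed by $P^\rmPS = 2^\N$ are images along the proper projections that discard a $2^\N$ factor. Finite meets distribute over such compact-indexed joins by \cref{rem:distributivity_over_generalised_joins}. The claim to prove by induction on a dual geometric formula $\phi$ is that $\phi^{\ReprE^\rmPS(\theory')}$ is the sublocale of $A_1^{\ReprE^\rmPS(\theory')} \times_{\ReprG^\rmPS(\theory')_0} \dots \times_{\ReprG^\rmPS(\theory')_0} A_\ell^{\ReprE^\rmPS(\theory')}$ cut out by
\[\top \vdash \bigvee_{p_1,\dots,p_\ell \in 2^\N} \phi_{p_1,\dots,p_\ell} \wedge [\equiv_1 p_1] \wedge \dots \wedge [\equiv_\ell p_\ell].\]

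\textbf{Atomic cases.} For a relation symbol the claim is the definition. For equality, $\Delta^\ast$ identifies $[\equiv_1 p]$ with $[\equiv_2 p]$. Showing that this is equivalent to the displayed relation with $[p\sim^A q]$ repeats the derivation in the LH case. The manipulations needed are the relations $[\equiv p]\wedge[\equiv q]\vdash[p\sim q]$, transitivity, the transport relation $[\equiv p]\wedge[p\sim q]\dashv\vdash[\equiv q]\wedge[p\sim q]$, Frobenius (to pull $[\equiv_1 p]$ inside a $2^\N$-join), and the covering relation $\top\vdash\bigvee_p[\equiv p]$. These manipulations are valid for closed-type presentations because they only use finite meets, compact-indexed joins and Frobenius. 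Equality between variables over the Hausdorff $2^\N$ occurs only as the diagonal closed, which is legitimate.

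\textbf{Inductive cases.} Finite disjunction is the union of closed sublocales, and $\bigvee$ commutes with $\bigvee$. The existential case is the image along the proper projection forgetting $x_1$. By \cref{sec:dual}, this is the $2^\N$-indexed join over $p_1$ of the relation for $\chi$. Reindexing then gives $(\exists x_1\,\chi)_{p_2,\dots,p_\ell}$, exactly as in the LH case.

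The infinitary conjunction is the genuinely new case and the one I expect to be the main obstacle. The interpretation is the intersection of the closed sublocales $\phi_i^{\ReprE}$, so it is cut out by imposing all the relations $\top\vdash\bigvee_{\vec p}(\phi_i)_{\vec p}\wedge[\equiv\vec p]$ together. I must show this equals the single relation $\top\vdash\bigvee_{\vec p}\bigwedge_i(\phi_i)_{\vec p}\wedge[\equiv\vec p]$, which means exchanging an arbitrary meet with a $2^\N$-indexed join. The syntactic distributivity axiom does not give this directly. I would instead argue semantically in the internal logic of $\Sh(\ReprG^\rmPS(\theory')_0)$. By \cref{prop:each-generic-sort-is-LH}, each $A_j^{\ReprE}$ is the compact Hausdorff subquotient $2^\N/{\sim}$. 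The relation for $\phi_i$ says that the chosen element $\vec a$ of the fibre lies in the closed set $\{\vec a \mid \exists \vec p\ (\vec p \mapsto \vec a) \wedge (\phi_i)_{\vec p}\}$. The transport axioms of \cref{def:repr-grpd-LH}, together with the fact that $[\equiv p]\wedge[\equiv q]\vdash [p\sim q]$, make each $(\phi_i)_{\vec p}$ invariant under $\sim$ on the support. Hence this set is the pullback along the proper quotient map of the closed set $\{\vec p \mid (\phi_i)_{\vec p}\}$. Images along proper \emph{surjections} of saturated closed sets commute with arbitrary intersections, since saturated closeds correspond bijectively to closeds downstairs. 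This yields the exchange. The same invariance argument also handles the binary-conjunction step in the PS setting, where I cannot simply diagonalise indices as in the LH proof.

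\textbf{Conclusion.} With the claim established, each axiom $\phi\vdash\psi$ of $\theory'$ gives $\phi^{\ReprE}\subseteq\psi^{\ReprE}$. This follows from $[\equiv_i p_i]\vdash[p_i\sim p_i]$ and the axiom $\bigwedge_i[p_i\sim p_i]\wedge\phi_{\vec p}\vdash\psi_{\vec p}$ of $\ReprG^\rmPS(\theory')_0$, applied inside the join. Together with \cref{prop:each-generic-sort-is-LH} (so each $\rho_A$ is proper separated) and the closedness of each $R^{\ReprE}$, this shows $\ReprE^\rmPS(\theory')$ is a PS-model.
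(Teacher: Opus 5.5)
Your proposal is correct in outline and agrees with the paper's proof except at the one genuinely new step, the infinitary conjunction, where you use a different key fact. The paper writes $\bigwedge_{i\in I}\phi_i$ as a directed meet of finite meets. Finite meets are handled verbatim as in the LH proof. For directed meets it only uses that the $2^\N$-indexed join is the image along a proper projection $\pi$ whose right adjoint $\pi_*$ preserves directed joins of opens, i.e.\ images along proper maps commute with filtered intersections of closed sublocales. That step is purely order-theoretic and stays inside the presentation.

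You instead treat all meets at once. Via \cref{prop:each-generic-sort-is-LH} you read $\bigvee_{\vec p}(\phi_i)_{\vec p}\wedge[\equiv\vec p]$ as the image of the closed set $F_i\cap S$ of representatives under the proper quotient $q\colon S\twoheadrightarrow 2^\N/{\sim}$. (Note that it is $F_i\cap S$ that is the pullback along $q$ of the fibrewise set, not conversely.) You then use that $q^{-1}$ maps the closeds of the quotient bijectively onto the saturated closeds and preserves all intersections. This costs two things: an auxiliary induction showing every $\phi_{\vec p}$ is $\sim$-invariant on the support, and the identification of $\sim$-invariance with $q$-saturation (the kernel pair of $q$ is $\sim$, and images along proper maps are pullback-stable). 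In return it gives a uniform argument valid for arbitrary families and shows that invariance is what makes the exchange work.

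Incidentally, the LH diagonalisation for binary meets does carry over. It uses only Frobenius, restriction to the diagonal of $2^\N$, and precisely this invariance, which is why the paper can say finite meets work as before.

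One caveat, which applies equally to the paper's LH argument that you copy, concerns the existential step. The image of $[\equiv_1 p_1]$ under the projection forgetting $A_1^{\ReprE^\rmPS(\theory')}$ is $[p_1\sim p_1]$. So you obtain $\bigvee_{\vec p}[p_1\sim p_1]\wedge\chi_{\vec p}\wedge[\equiv_2 p_2]\wedge\dots\wedge[\equiv_\ell p_\ell]$. This matches $(\exists x_1\,\chi)_{p_2,\dots,p_\ell}=\bigvee_{p_1}\chi_{\vec p}$ only when $\chi_{\vec p}\vdash[p_1\sim p_1]$. For $\chi=\top$ this fails. With the literal definition, the axiom $\top\vdash\exists x\,\top$ becomes vacuous in $\ReprG^\rmPS(\theory')_0$, so the empty partial equivalence relation is a point over which the generic fibre is empty. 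Relativising bound variables repairs this: set $(\exists x_{k+1}\chi')_{\vec p}=\bigvee_{p}[p\sim p]\wedge\chi'_{\vec p,p}$. Your step then goes through by Frobenius.
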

\begin{proof}
    By following the analogous inductive argument to that provided in \cref{prop:generic_bundle_is_LH_model}, the PS case can be proved as well.  The only additional inductive step that is required in the PS case concerns infinite conjunctions.  Suppose that for each $i \in I$ the dual geometric formula $\phi_i$ is interpreted in the model $\ReprE^\rmPS(\theory')$ by the closed sublocale of $A^{\ReprE^\rmPS(\theory)}_1 \times_{\ReprG^\rmPS(\theory')_0} \dots \times_{\ReprG^\rmPS(\theory')_0} A^{\ReprE^\rmPS(\theory)}_1 $ given by adding to its closed-type generalised presentation the relation 
    \[\top \vdash \bigvee_{s_1, \dots , s_\ell \in 2^\N} {(\phi_i)}_{s_1, \dots , s_\ell} \land [\equiv_1 s_1] \land \dots \land [\equiv_\ell s_\ell].\]
    Finite meets work as in \cref{prop:generic_bundle_is_LH_model} and so it suffices to consider directed meets.
    The interpretation of the directed conjunction $\bigwedge_I \phi_i$, i.e.\ the directed intersection of sublocales $\bigcap_I {\phi_i}^{\ReprE^\rmPS(\theory')}$, is given by the relation 
    \begin{align*}
        \top & \vdash \bigwedge_{i \in I} \bigvee_{s_1, \dots , s_\ell \in 2^\N} {(\phi_i)}_{s_1, \dots , s_\ell} \land [\equiv_1 s_1] \land \dots \land [\equiv_\ell s_\ell], \\
        & \equiv \bigvee_{s_1, \dots , s_\ell \in 2^\N} \bigwedge_{i \in I}  {(\phi_i)}_{s_1, \dots , s_\ell} \land [\equiv_1 s_1] \land \dots \land [\equiv_\ell s_\ell], \\
        & \equiv \bigvee_{s_1, \dots , s_\ell \in 2^\N} \left( \bigwedge_{i \in I}  {\phi_i}\right)_{s_1, \dots , s_\ell} \land [\equiv_1 s_1] \land \dots \land [\equiv_\ell s_\ell],
    \end{align*}
    where the first equivalence follows from the fact that right adjoint of the inverse image of the proper projection defining the join over $2^\N$ preserves directed joins of opens, and hence, directed meets of closeds.
    This completes the proof.
\end{proof}

We now upgrade the locale $\ReprG^J(\theory)_0$ into a localic category $\ReprG^J(\theory)$ in order to capture morphisms between models. We will then give an action of this category on $\ReprE^J(\theory)$ making it into an LH-model/PS-model of $\theory$ over $\ReprG^J(\theory)$. This is the model that we will later show to universal amongst all such models.

\begin{definition}[{cf.\ \cite[\S 2]{manuell2023representing}}]\label{def:generic-LH-localic-category}
    Let $J$ be either $\rmLH$ or $\rmPS$ and let $\theory$ be a geometric theory or a dual geometric theory, as appropriate.  We define $\ReprG^J(\theory)$ to be the following localic category.
    \begin{enumerate}
        \item The locale of objects is the locale $\ReprG^J(\theory)_0$ from \cref{def:repr-grpd-LH}.
        \item The locale of arrows $\ReprG^J(\theory)_1$ has the following generalised frame presentation.
        \begin{enumerate}
            \item We take two copies of the presentation for $\ReprG^J(\theory)_0$ (including generators $[p \sim^A_1 p'], [q \sim^A_2 q']$ for each sort $A$ and $p,p',q,q' \in P^J$, and their associated relations).
            \item For each sort $A$ and $p, q \in P^J$, we add a generator $[\alpha^A(p) = q]$ together with the axioms
            \begin{align*}
                [\alpha^A(p) = q] \land [\alpha^A(p') = q'] \land [p \sim^A_1 p'] & \vdash [q \sim^A_2 q'], & \text{for $p,p',q,q' \in P^J$,} \\
                [p \sim^A_1 p] & \vdash \bigvee_{q \in P^J} [\alpha^A(p) = q], & \text{for $p \in P^J$,}
            \end{align*}
            and, for each relation symbol $R \subseteq A_1 \times \dots \times A_\ell$ and $p_1, \dots p_\ell, q_1, \dots , q_\ell \in P^J$, the axioms
            \[
[(p_1, \dots , p_\ell) \in R_1] \land \bigwedge_{i=1}^\ell [\alpha^{A_i}(p_i) = q_i] \vdash [(q_1, \dots , q_\ell) \in R_2].
\]
            Thus, the generators $[\alpha^A(p) = q]$ describe a function between the respective subquotients of $P^J$ that moreover preserves the $\theory$-model structure (see \cref{rem:points_of_classifying_cat}).
        \end{enumerate}
        \item The source, target and identity maps as defined as follows (where we think of $[p \sim^A p']$ as $[(p,p') \in R]$ for $R = ({=_A})$):
  \begin{align*}
      s^*\colon {[(p_1,\dots,p_k) \in R]} &\mapsto {[(p_1,\dots,p_k) \in R_1]}.
  \end{align*}
  \begin{align*}
      t^*\colon {[(q_1,\dots,q_k) \in R]} &\mapsto {[(q_1,\dots,q_k) \in R_2]}.
  \end{align*}
  \begin{align*}
      e^*\colon {[(p_1,\dots,p_k) \in R_1]} &\mapsto {[(p_1,\dots,p_k) \in R]}, \\
      e^*\colon {[(q_1,\dots,q_k) \in R_2]} &\mapsto {[(q_1,\dots,q_k) \in R]}, \\
      e^*\colon {[\alpha^A(p) = q]} &\mapsto {[p \sim^A q]}.
  \end{align*}
  \item The pullback locale $\ReprG^J(\theory)_1 \times_{\ReprG^J(\theory)_0} \ReprG^J(\theory)_1$ can be given a generalised presentation whose generating locale consists of three copies of the generators for $\ReprG^J(\theory)_0$ and further generators $[\beta^A(p) = q]$, $[\gamma^A(q) = r]$, for each sort $A$ and $p, q, r \in P^J$.  The composition map $m \colon \ReprG^J(\theory)_1 \times_{\ReprG^J(\theory)_0} \ReprG^J(\theory)_1 \to \ReprG^J(\theory)_1$ is defined by
  \begin{align*}
      m^*\colon {[(p_1,\dots,p_k) \in R_1]} &\mapsto {[(p_1,\dots,p_k) \in R_1]}, \\
      m^*\colon {[(q_1,\dots,q_k) \in R_2]} &\mapsto {[(q_1,\dots,q_k) \in R_3]} ,\\
      m^*\colon {[\alpha^A(p) = r]} &\mapsto \bigvee_{q \in P} {[\beta^A(p) = q]} \wedge {[\gamma^A(q) = r]}.
  \end{align*}
  (Intuitively, the map $m^\ast$ is sending $\beta^A$ and $\gamma^A$ to their relational composite.)
    \end{enumerate}
\end{definition}
\begin{remark}[{cf.\ \cite[Remark 2.7]{manuell2023representing}}]\label{rem:points_of_classifying_cat}
    Recall from \cref{rem:points_of_generic_bundle} that points of the locale of objects $\ReprG^\rmLH(\theory)_0$ correspond to models of $\theory$ where each sort is presented as a subquotient of $\N$.  A point of the locale of arrows $\ReprG^\rmLH(\theory)_1$ consists of a homomorphism $\alpha$ between two such models, with the functions $\alpha^A \colon \N / {\sim}^A_1 \to \N/{\sim}^A_2$ mapping between the respective subquotients.  Similarly, the points of $\ReprG^\rmPS(\theory)_1$ correspond to homomorphisms between $\theory$-models whose sorts are presented as (closed) subquotients of the Cantor space $2^\N$.
\end{remark}
We now describe the action of this category on our bundle $\ReprE^J(\theory)$.
\begin{definition}
    Note that the pullback locale $A^{\ReprE^J(\theory)} \times_{\ReprG^J(\theory)_0} \ReprG^J(\theory)_1$ has generators of the form $[(p_1,\dots,p_k) \in R_1]$, $[(q_1,\dots,q_k) \in R_2]$, $[\alpha(p) = q]$ and $[\equiv p]$.
    The action $\theta^{A^{\ReprE^J(\theory)}} \colon A^{\ReprE^J(\theory)} \times_{\ReprG^J(\theory)_0} \ReprG^J(\theory)_1 \to A^{\ReprE^J(\theory)}$ is defined by
\begin{alignat*}{2}
  {\theta^{A^{\ReprE^J(\theory)}}}^*\colon && {[(q_1,\dots,q_k) \in R]} &\mapsto {[(q_1,\dots,q_k) \in R_2]}, \\
  {\theta^{A^{\ReprE^J(\theory)}}}^*\colon && {[\equiv q]} &\mapsto \bigvee_{p \in P^J} {[\equiv p]} \wedge {[\alpha(p) = q]}.
\end{alignat*}
\end{definition}
\begin{lemma}
    For each relation symbol $R$ of the (dual) geometric theory $\theory$, the interpretation $R^{\ReprE^J(\theory)} \subseteq A_1^{\ReprE^J(\theory)} \times_{\ReprG^J(\theory)_0} \dots \times_{\ReprG^J(\theory)_0} A_n^{\ReprE^J(\theory)}$ is stable under the action of $\theta^{A^{\ReprE^J(\theory)}}$.
\end{lemma}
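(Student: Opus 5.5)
We have to exhibit the factorisation of the action through $R^{\ReprE^J(\theory)}$ in the stability diagram of \cref{def:model_over_groupoid}. Both the source and the target of the composite are sublocales presented by adding relations to a (closed-type generalised) presentation, and the action is given on generators. So it suffices to check at the level of presentations. We pull back the single defining relation of $R^{\ReprE^J(\theory)}$ along $(\theta^{A_1}\times_{\ReprG^J(\theory)_0}\dots\times_{\ReprG^J(\theory)_0}\theta^{A_n})\circ\sigma$. We then show that the resulting open (respectively closed) proposition is entailed by the relations presenting $R^{\ReprE^J(\theory)}\times_{\ReprG^J(\theory)_0} \ReprG^J(\theory)_1\times_{\ReprG^J(\theory)_0}\dots\times_{\ReprG^J(\theory)_0} \ReprG^J(\theory)_1$. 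Since the pullbacks of the $n$ copies of $\ReprG^J(\theory)_1$ over $\ReprG^J(\theory)_0$ are all along $s$, we may first identify this fibre product with a single copy of $\ReprG^J(\theory)_1$ acting diagonally. This means we work with one family of generators $[\alpha^{A_i}(p)=q]$ together with the copies $[\equiv_i p]$.

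The defining relation of $R^{\ReprE^J(\theory)}$ is
\[\top\vdash\bigvee_{q_1,\dots,q_n\in P^J}[(q_1,\dots,q_n)\in R]\wedge[\equiv_1 q_1]\wedge\dots\wedge[\equiv_n q_n].\]
Under the action, $[(q_1,\dots,q_n)\in R]$ is sent to $[(q_1,\dots,q_n)\in R_2]$ and $[\equiv_i q_i]$ is sent to $\bigvee_{p_i}[\equiv_i p_i]\wedge[\alpha^{A_i}(p_i)=q_i]$. Distributing finite meets over the joins yields
\[\bigvee_{\vec p,\vec q}[(\vec q)\in R_2]\wedge\textstyle\bigwedge_i[\equiv_i p_i]\wedge[\alpha^{A_i}(p_i)=q_i].\]
In the PS case the joins are $P^J$-indexed, and the distributivity is \cref{rem:distributivity_over_generalised_joins}. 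Our task is to derive $\top$ below this expression.

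We argue from the hypothesis $\top\vdash\bigvee_{\vec p}[(\vec p)\in R_1]\wedge\bigwedge_i[\equiv_i p_i]$ on the domain. Here $R_1$ is the source copy, because $s^*$ sends generators to their index-$1$ copies. Next we use the relation $[\equiv_i p_i]\vdash[p_i\sim^{A_i}_1 p_i]$. We then apply the totality axiom $[p_i\sim_1 p_i]\vdash\bigvee_{q_i}[\alpha^{A_i}(p_i)=q_i]$ and distribute again. This gives
\[\top\vdash\bigvee_{\vec p,\vec q}[(\vec p)\in R_1]\wedge\textstyle\bigwedge_i[\equiv_i p_i]\wedge[\alpha^{A_i}(p_i)=q_i].\]
Finally, the preservation axiom $[(\vec p)\in R_1]\wedge\bigwedge_i[\alpha^{A_i}(p_i)=q_i]\vdash[(\vec q)\in R_2]$ upgrades each disjunct to the required one, which finishes the argument.

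The only real obstacle is bookkeeping in the PS case. There we are in the closed orientation, the joins are images under proper projections, and we must ensure that each step is a legitimate manipulation of closed-type generalised presentations. Concretely, the substitution of the totality axiom inside a $P^J$-indexed join must be valid, and so must Frobenius distributivity. I would handle this by phrasing the argument internally in $\Sh$ of the base, as in \cref{prop:each-generic-sort-is-LH}, where the computation becomes the pointwise statement \[(x_1,\dots,x_n)\in R\;\Rightarrow\;(\alpha(x_1),\dots,\alpha(x_n))\in R,\] which holds because $\alpha$ is a homomorphism. This internal phrasing is uniform in $J$.
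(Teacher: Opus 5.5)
Your proposal is correct and is essentially the paper's argument. The paper does the unary case and derives the pulled-back relation $\top\vdash\bigvee_{p,q}[\equiv p]\wedge[\alpha(p)=q]\wedge[q\in R_2]$ from $\top\vdash\bigvee_p[\equiv p]\wedge[p\in R_1]$ using exactly $[\equiv p]\vdash[p\sim_1 p]$, totality and preservation, uniformly in $J$. So your internal-logic detour for the PS case is not needed.

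One correction: the fibre product of $n$ copies of $\ReprG^J(\theory)_1$ along $s$ is not a single copy of $\ReprG^J(\theory)_1$, since it consists of tuples of arrows with a common source. The reduction to one arrow acting diagonally is justified instead because that is the intended meaning of stability. The image must land in the fibre product over $\ReprG^J(\theory)_0$, and the preservation axiom is only available for a single $\alpha$; with independent arrows the statement would fail.
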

\begin{proof}
    For simplicity, we will deal with the case of a unary predicate $R \subseteq A$ (the $\ell$-ary case is almost identical).  To demonstrate that the action $\theta^{A^{\ReprE^J(\theory)}}$ restricts to a map $R^{\ReprE^J(\theory)} \times_{\ReprG^J(\theory)_0} \ReprG_{\cong}^J(\theory)_1 \to R^{\ReprE^J(\theory)}$, we must show that the image under $(\theta^{A^{\ReprE^J(\theory)}})^*$ of the relation $\top \vdash \bigvee_{q \in P^J} [q \in R] \land [\equiv q]$ that defines $R^{\ReprE^J(\theory)} \subseteq A^{\ReprE^J(\theory)} $ is derivable from the presenting relations of $R^{\ReprE^J(\theory)} \times_{\ReprG^J(\theory)_0} \ReprG^J(\theory)_1$.  That is, we must derive
    \[
    \top \vdash \bigvee_{q \in P^J} \bigvee_{p \in P^J} [\equiv p] \land [\alpha(p) = q] \land [q \in R_2]
    \]
    from the relations in \cref{def:generic-LH-localic-category} and $\top \vdash \bigvee_{p \in P^J} [\equiv p] \land [p \in R_1]$.  But this is immediate from the relations $[\equiv p] \vdash [p \sim_1 p]$, $[p \sim_1 p] \vdash \bigvee_{q \in P^J} [\alpha(p) = q]$ and $[p \in R_1] \land [\alpha(p) = q] \vdash [q \in R_2]$.
\end{proof}
\begin{corollary}\label{prop:E(T)-is-stable-under-action}
    With this action, if $\theory$ is a geometric theory, then $\ReprE^\rmLH(\theory)$ is an LH-model over the localic category $\ReprG^\rmLH(\theory)$, and similarly, for a dual geometric theory $\theory'$, $\ReprE^\rmPS(\theory')$ is a PS-model over $\ReprG^\rmPS(\theory')$.
\end{corollary}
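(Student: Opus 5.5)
The corollary requires two facts about $\ReprE^J(\theory)$: it is an LH-/PS-model over the locale $\ReprG^J(\theory)_0$, and it is a $\Sigma$-structure over the localic category $\ReprG^J(\theory)$. The first fact is \cref{prop:generic_bundle_is_LH_model} (respectively \cref{prop:generic_bundle_is_PS_model}), together with \cref{prop:each-generic-sort-is-LH}, which shows the carriers are local homeomorphisms (respectively proper separated maps). The definition of $\theory$-model over $\H$ asks only that the underlying $\Sigma$-structure over $H_0$ satisfy the axioms. So what remains is the second fact: each $\theta^{A^{\ReprE^J(\theory)}}$ is an $H_1$-action in the sense of \cref{def:sheaf_over_localic_cat}, and the relation symbols are stable. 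The preceding lemma already gives stability of every $R^{\ReprE^J(\theory)}$.

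I would check the three action equations on generators of the presentation, since all maps involved are defined on generators.

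\textbf{Compatibility with projection.} We need $\rho_A\circ\theta = t\circ\pi_2$. This holds because $\theta^*$ sends each generator $[(q_1,\dots,q_k)\in R]$ of $\ReprG^J(\theory)_0$ to $[(q_1,\dots,q_k)\in R_2]$, and $t^*$ does the same.

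\textbf{Unit law.} Pulling back along $(\id,e\rho_A)$ sends $\bigvee_p[\equiv p]\wedge[\alpha(p)=q]$ to $\bigvee_p[\equiv p]\wedge[p\sim q]$. Using the relations $[\equiv p]\wedge[p\sim q]\vdash[\equiv q]$, $[\equiv q]\vdash[q\sim q]$ and $\top\vdash\bigvee_p[\equiv p]$, this equals $[\equiv q]$.

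\textbf{Associativity.} Both composites send $[\equiv r]$ to $\bigvee_{p,q}[\equiv p]\wedge[\beta(p)=q]\wedge[\gamma(q)=r]$. One side gets there by the definition of $m^*$; the other by applying $\theta^*$ twice and distributing the finite meets over the joins. The remaining generators are matched directly by the definitions of $s^*,t^*,m^*$.

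Finally, the descriptions of $s,t,e,m$ make $\ReprG^J(\theory)$ a localic category; I take that as part of \cref{def:generic-LH-localic-category}, as in \cite{manuell2023representing}.

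The main obstacle is the PS case. There the joins $\bigvee_{p\in 2^\N}$ are existential quantifications along proper projections in a closed-type presentation, so distributing meets over them is not ordinary frame distributivity. I would justify each such step by Frobenius reciprocity, as in \cref{rem:distributivity_over_generalised_joins}, and by the fact that images along composites of proper projections compose. With that in hand, the generator computations above go through verbatim, with the order reversed.
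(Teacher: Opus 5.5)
Your proposal is correct and takes essentially the paper's route. The paper gives no separate proof: the corollary follows from \cref{prop:generic_bundle_is_LH_model} and \cref{prop:generic_bundle_is_PS_model} (together with \cref{prop:each-generic-sort-is-LH}) combined with the preceding stability lemma. Your generator-level check that $\theta$ satisfies the action equations is a correct supplement to what the paper leaves implicit in its definition of $\theta$.
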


Finally, we note how the above description given for the localic category $\ReprG^\rmLH(\theory)$ (respectively, $\ReprG^\rmPS(\theory)$) is easily modified to obtain instead the core localic groupoid $\ReprG^\rmLH_{\cong}(\theory)$ (resp., $\ReprG^\rmPS_{\cong}(\theory)$).  Just as in \cref{prop:E(T)-is-stable-under-action}, the $\Sigma$-structure $\ReprE^\rmLH(\theory)$ defines an LH-model of $\theory$ over $\ReprG^\rmLH_{\cong}(\theory)$ and $\ReprE^\rmPS(\theory')$ defines a PS-model over $\ReprG^\rmPS(\theory')$.
\begin{definition}[{\cite[\S 2]{manuell2023representing}}]\label{def:generic-localic-groupoid}
   We use $\ReprG^J_{\cong}(\theory)$ to denote the core groupoid of $\ReprG^J(\theory)$, which is obtained as follows:
    \begin{enumerate}
  \item The locale of arrows $\ReprG^J_{\cong}(\theory)_1$ is the locale obtained by adding to the presentation of $\ReprG^J(\theory)_1$ the axioms
  \begin{align*}
     [\alpha^A(p) = q] \land [\alpha^A(p') = q'] \land [q \sim^A_2 q'] &\vdash [p \sim^A_1 p'] & \text{for $p,p',q,q' \in P^J$}, \\
     [q \sim^A_2 q] &\vdash \bigvee_{p \in P^J} [\alpha^A(p) = q] & \text{for $q \in P^J$},
  \end{align*}
    for each sort $A$, and for each relation symbol $R \subseteq A_1 \times \dots \times A_\ell$ and points $p_1, \dots p_\ell, q_1, \dots , q_\ell \in P^J$, the axiom
            \[
            [(q_1, \dots , q_\ell) \in R_2] \land \bigwedge_{i = 1}^\ell [\alpha^{A_i}(p_i) = q_i] \vdash [(p_1, \dots , p_\ell) \in R_1].
            \]
  \item The inversion map $i \colon \ReprG^J_{\cong}(\theory)_1 \to \ReprG^J_{\cong}(\theory)_1$ is defined by:
  \begin{align*}
      i^*\colon {[(p_1,\dots,p_k) \in R_1]} &\mapsto {[(p_1,\dots,p_k) \in R_2]}, \\
      i^*\colon {[(q_1,\dots,q_k) \in R_2]} &\mapsto {[(q_1,\dots,q_k) \in R_1]} ,\\
      i^*\colon {[\alpha^A(p) = q]} &\mapsto {[\alpha^A(q) = p]}.
  \end{align*}
 \end{enumerate}
\end{definition}
\begin{remark}
    Note that a direct construction of the core groupoid would add generators to $\ReprG^J(\theory)$ of the form $[(\alpha^A)^{-1}(q) = p]$ satisfying similar axioms to $\alpha^A$ in addition to ones imposing that $(\alpha^A)^{-1} \circ \alpha^A = {\sim_1^A}$ and $\alpha^A \circ (\alpha^A)^{-1} = {\sim_2^A}$. From this we can deduce that $[(\alpha^A)^{-1}(q) = p] = [\alpha^A(p) = q]$. The axioms then reduce to those given in \cref{def:generic-localic-groupoid} above. This is exactly like how a function is invertible if and only if it is bijective.
\end{remark}

\section{Descent for discrete opfibrations}\label{sec:descent_dopf}

The aim of this section is to prove the following proposition:
\begin{proposition}\label{prop:mod_extends_to_anafunctors}
  The pseudofunctor $\Sstruct(-) \colon \LocCat\op \to \CAT$ from \cref{sec:base_change} sends weak equivalences of localic categories to equivalences in $\CAT$ and hence, by \cite[Theorem 21]{pronk1996bicategories}, extends to a pseudofunctor $\LocCatAna\op \to \CAT$, which we will also call $\Sstruct(-)$.
\end{proposition}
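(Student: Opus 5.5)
Given a surjective weak equivalence $\Xi\colon \widetilde{\H} \to \H$, we need $\Xi^\ast\colon \Sstruct(\H) \to \Sstruct(\widetilde{\H})$ to be an equivalence of categories. Once that holds, \cite[Theorem 21]{pronk1996bicategories} applies directly, using the right calculus of fractions recalled in the background (\cite[Theorem 7.1]{roberts2012anafunctors}). So the real content is a descent statement, and the plan is to reduce it to one for single discrete opfibrations.

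\textbf{Step 1: reduce to discrete opfibrations.} A $\Sigma$-structure over $\H$ consists of, for each sort, a bundle with an $H_1$-action, i.e.\ a discrete opfibration over $\H$, together with stable sublocales of the wide pullbacks of these bundles. A stable sublocale $R^M \hookrightarrow A_1^M \times_{H_0} \dots \times_{H_0} A_n^M$ is itself a discrete opfibration over $\H$, namely a subobject of the fibre product in the category of discrete opfibrations. So it suffices to prove the following claim.

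\emph{Claim.} For a surjective weak equivalence $\Xi$, pullback $\Xi^\ast\colon \mathbf{DOpf}(\H) \to \mathbf{DOpf}(\widetilde{\H})$ is an equivalence of categories.

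Given the claim, pullback preserves finite limits, so $\Xi^\ast$ preserves wide pullbacks. Being an equivalence, it then induces bijections on subobjects, which are exactly the stable sublocales. Morphisms of structures correspond as well. Hence $\Xi^\ast$ is an equivalence on $\Sstruct(-)$.

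\textbf{Step 2: factor the descent problem.} Put $X = \widetilde{H}_0$ and $q = \Xi_0\colon X \to H_0$, an effective descent morphism. Full faithfulness gives $\widetilde{H}_1 \cong (X \times X) \times_{H_0 \times H_0} H_1$. A discrete opfibration over $\widetilde{\H}$ is a bundle $B \to X$ with an action of $\widetilde{H}_1$.

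Restricting this action along the inclusion of the kernel pair $X \times_{H_0} X \to \widetilde{H}_1$, the map $(x,x') \mapsto$ the identity on $q(x)=q(x')$, yields a descent datum for $q$. By effective descent, $B \cong q^\ast C$ for an essentially unique $C \to H_0$.

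\textbf{Step 3: descend the action.} The remaining work is to show that the $\widetilde{H}_1$-action on $q^\ast C$ descends to an $H_1$-action on $C$. Pulling $C \times_{H_0} H_1$ back along $q \times q$, via the source and target maps, gives $q^\ast C \times_X \widetilde{H}_1$. The composite of the action with $q^\ast C \to C$ should equalise the kernel pair of the effective descent morphism $q^\ast C \times_X \widetilde{H}_1 \to C \times_{H_0} H_1$, which is a pullback of $q \times q$. Here one uses that $q \times q$ is a composite of pullbacks of $q$, so it is again effective descent.

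That it equalises follows by the point-style argument of \cref{prop:2-cells_are_simplified}: compatibility of the action with the descent datum, i.e.\ functoriality of the action applied to composites with kernel-pair arrows. Since effective descent morphisms are regular epimorphisms, the action factors to $\beta\colon C \times_{H_0} H_1 \to C$. The action axioms for $\beta$ then hold because they hold after precomposing with epimorphisms. Full faithfulness of $\Xi^\ast$ on morphisms is handled the same way: equivariant maps descend, and their equivariance is checked through the same epimorphisms.

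\textbf{Main obstacle.} I expect Step 3 to be the hardest. One must verify rigorously that the action map is constant on the kernel pair of $q^\ast C \times_X \widetilde{H}_1 \to C \times_{H_0} H_1$, and that all the relevant maps are pullback-stable regular epimorphisms in $\Loc$. I would first make this rigorous with generalised points, treating the kernel pair as pairs of lifts of the same arrow of $\H$, and then use descent for maps, not just objects, to conclude.
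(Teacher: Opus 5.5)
Your proof is correct, but it is organised differently from the paper's. The paper works one level up, in $\Cat(\C)$:
\cref{lem:ff_eso_eff_desc} shows, via Le Creurer's criterion, that a fully faithful $\Xi$ with $\Xi_0$ of effective descent is an effective descent morphism of internal categories, with $\Xi_1$ also of effective descent.
\cref{lem:disc_opfibrations_descend} shows that discrete opfibrations descend along such functors, by cancelling pullbacks along the pullback-stable extremal epimorphism $\Xi_1$.
Finally, \cref{prop:bundles_descend} shows that the descent data, i.e.\ actions of the kernel groupoid $\widetilde{\H}\times_{\H}\widetilde{\H}$, exist, are unique, and are respected by every morphism.

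You instead stay in $\C$. You first descend the bundle along $q=\Xi_0$, and then descend the action along $q^\ast C\times_X\widetilde{H}_1\to C\times_{H_0}H_1$, which is a pullback of $q\times q$.

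The key ingredient is the same in both arguments: the arrows of $\widetilde{\H}$ lying over identities (the paper's $u_{a,b}$, your kernel-pair arrows). The paper uses them to show that the descent datum is forced. You use them to show that the action is constant on the kernel pair of that regular epimorphism, by factoring an arrow $(y,y',h)$ as an identity-lift $y\to x$, then $(x,x',h)$, then an identity-lift $x'\to y'$.

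Your route is more elementary and self-contained. It needs only that effective descent morphisms are pullback-stable, closed under composition and regular epimorphisms, so it also works in any finitely complete $\C$. The cost is verifying the action axioms and equivariance by hand, through the epimorphisms onto $C\times_{H_0}H_1$ and $C\times_{H_0}H_1\times_{H_0}H_1$. The paper's route yields the independently useful fact that such functors are effective descent in $\Cat(\C)$, and it follows the standard pattern of showing that descent data are superfluous.

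One small imprecision in your Step 1, which the paper's one-line deduction shares: subobjects in $\mathbf{DOpf}(\H)$ are stable monomorphisms, not necessarily sublocales. Identifying them with stable sublocales therefore also requires $\Xi_0^\ast$ to reflect sublocale inclusions. For the open and closed relations of LH- and PS-structures, this is the descent of open and proper maps used in \cref{cor:Tmod_extends}.
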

We will prove this using descent.
Recall that a morphism $f\colon X \to Y$ in $\Loc$ (say) is an effective descent morphism if pullback along $f$ induces an equivalence between the slice category $\Loc/Y$ (i.e.\ discrete opfibrations over $Y$ viewed as a categorically discrete localic category) and the category of bundles (i.e.\ discrete opfibrations) over the groupoid induced by the kernel pair of $f$.

In fact, this immediately essentially proves the result for weak equivalences $\Phi$ with categorically discrete codomain, since we have that $\Phi_0$ is an effective descent morphism and the induced category structure on $X$ making $\Phi$ fully faithful as in \cref{rem:ff_induced_domain} is precisely the kernel pair of $\Phi_0$.

To generalise this result to non-discrete codomain we will study the descent of internal discrete opfibrations in categories of internal categories.

\begin{lemma}\label{lem:ff_eso_eff_desc}
  Let $\Phi\colon \H \to \K$ be an internal functor in $\Cat(\C)$ for a finitely complete category $\C$. If $\Phi$ is fully faithful and $\Phi_0$ is of effective descent in $\C$, then $\Phi$ is an effective descent morphism in $\Cat(\C)$ for which $\Phi_1$ is also of effective descent in $\C$.
\end{lemma}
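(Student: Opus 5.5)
The plan is to work levelwise, reducing effective descent in $\Cat(\C)$ to effective descent in $\C$ along the three components $\Phi_0$, $\Phi_1$ and $\Phi_2 \colon H_1 \times_{H_0} H_1 \to K_1 \times_{K_0} K_1$. This needs two ingredients:
\begin{enumerate}
\item each $\Phi_n$ is of effective descent in $\C$;
\item descent data and their effectiveness in $\Cat(\C)$ can be computed componentwise, because finite limits in $\Cat(\C)$ (in particular kernel pairs and pullbacks) are computed levelwise.
\end{enumerate}

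The first step is the statement about $\Phi_1$. By full faithfulness, $\Phi_1$ is the pullback of $\Phi_0 \times \Phi_0 \colon H_0 \times H_0 \to K_0 \times K_0$ along $(s,t)\colon K_1 \to K_0 \times K_0$. Now $\Phi_0 \times \Phi_0 = (\Phi_0 \times \id)\circ(\id \times \Phi_0)$ is a composite of pullbacks of $\Phi_0$. Since effective descent morphisms are stable under pullback and composition, $\Phi_0 \times \Phi_0$ is of effective descent, and hence so is $\Phi_1$. In exactly the same way, full faithfulness shows that $\Phi_2$ is the pullback of $\Phi_0^3$ along $K_1 \times_{K_0} K_1 \to K_0^3$, so $\Phi_2$ is also of effective descent.

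Next I would set up the descent problem in $\Cat(\C)$. The kernel pair of $\Phi$ in $\Cat(\C)$ is computed levelwise, so a descent datum for an internal category $\X$ over $\H$ consists of three things:
\begin{enumerate}
\item descent data for $X_0 \to H_0$ along $\Phi_0$;
\item descent data for $X_1 \to H_1$ along $\Phi_1$;
\item descent data for $X_1 \times_{X_0} X_1 \to H_1\times_{H_0}H_1$ along $\Phi_2$.
\end{enumerate}
These must be compatible with $s,t,e,m$.

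By the first step, each level descends to an object $Y_n$ over $K_n$. The structure maps descend as well. For instance, the map $X_1 \to X_0$ lying over $s$ is the same as a map $X_1 \to s^*X_0$ over $H_1$. This map is a morphism of descent data along $\Phi_1$, where $s^*X_0$ carries the pulled-back datum, since $\Phi$ commutes with $s$. Because pullback along $\Phi_1$ is fully faithful on descent data, the map descends to $Y_1 \to s^*Y_0$, that is, to a map $Y_1 \to Y_0$ over $s$. The category axioms for $\mathcal Y$ then follow by faithfulness: pullback along an effective descent morphism is faithful. Full faithfulness and essential surjectivity of $\Cat(\C)/\K \to \mathrm{Desc}(\Phi)$ follow in the same way, levelwise from the equivalences for $\Phi_0$, $\Phi_1$ and $\Phi_2$.

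The step I expect to be the main obstacle is checking that the descended $Y_2$ really is $Y_1 \times_{Y_0} Y_1$. In other words, descending along $\Phi_2$ must be compatible with forming the pullback of the descended objects along $\Phi_1$ and $\Phi_0$. My approach is as follows:
\begin{enumerate}
\item Form the canonical comparison map $Y_2 \to Y_1 \times_{Y_0} Y_1$ over $K_1 \times_{K_0} K_1$.
\item Pull this map back along $\Phi_2$. Pullback preserves finite limits, and the relevant squares relating $\Phi_2$, $\Phi_1$ and $\Phi_0$ are pullbacks by full faithfulness. So the pulled-back map becomes the identity $X_1\times_{X_0}X_1 \to X_1\times_{X_0}X_1$.
\item Conclude that the comparison map is an isomorphism, since pullback along an effective descent morphism reflects isomorphisms.
\end{enumerate}
Alternatively, one can simply avoid $\Phi_2$ and define the composition on $\mathcal Y$ by descending $X_1\times_{X_0}X_1 \to X_1$ as a morphism of descent data along $\Phi_2$, whose effectiveness was established in the first step.
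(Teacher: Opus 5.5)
Your strategy is sound and matches the paper where they overlap. Full faithfulness makes $\Phi_1$ and $\Phi_2$ pullbacks of powers of $\Phi_0$, and descending an internal category level by level is a hands-on version of the criterion of Le Creurer (Corollary 3.3.1) that the paper cites. But that criterion uses \emph{four} components $\Phi_0,\dots,\Phi_3$, and your reduction to three has a genuine gap at the associativity axiom for the descended $\mathcal{Y}$.

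Associativity is an equation between two maps $Y_1\times_{Y_0}Y_1\times_{Y_0}Y_1 \rightrightarrows Y_1$. What you know is that they agree after precomposing with the comparison $X_1\times_{X_0}X_1\times_{X_0}X_1 \to Y_1\times_{Y_0}Y_1\times_{Y_0}Y_1$, since $\X$ is associative. This comparison is the pullback of $\Phi_3\colon H_1\times_{H_0}H_1\times_{H_0}H_1 \to K_1\times_{K_0}K_1\times_{K_0}K_1$. So the ``faithfulness of pullback'' you invoke must be faithfulness along $\Phi_3$. Nothing you prove about $\Phi_0,\Phi_1,\Phi_2$ shows that $\Phi_3$ is even an epimorphism.

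This is not a technicality, because for general internal functors three levels do not suffice. In $\Cat(\Set)$, let $\K$ be free on a composable triple $f,g,h$. Let $\H$ be the disjoint union of four free categories on a composable pair, mapped onto $(f,g)$, $(g,h)$, $(f,hg)$ and $(gf,h)$. Then $\Phi_0,\Phi_1,\Phi_2$ are surjections, hence of effective descent in $\Set$. Now split $hgf$ into two arrows, $h\circ(gf)\neq(hg)\circ f$. This gives a non-associative ``category'' over $\K$. Its pullback to $\H$ is an honest category, since $\H$ has no non-degenerate composable triples, and it carries a canonical descent datum. That datum cannot be effective: descending levelwise along the surjections would recover the non-associative composition. (This functor is of course not fully faithful.)

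In your setting the repair is one line. By the same argument you gave for $\Phi_2$, full faithfulness shows that $\Phi_3$ is the pullback of $\Phi_0^{\times 4}$ along $(s\pi_1,t\pi_1,t\pi_2,t\pi_3)\colon K_1\times_{K_0}K_1\times_{K_0}K_1 \to K_0^4$, hence of effective descent. This is exactly why the paper checks $\Phi_3$ too, and with it your levelwise argument goes through.

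The step you flag as the main obstacle is not one. For \emph{any} internal functor, $\Phi_2^*(Y_1\times_{Y_0}Y_1)\cong X_1\times_{X_0}X_1$. This holds because pullback functors preserve finite limits, $\pi_i\circ\Phi_2=\Phi_1\circ\pi_i$, and $t\circ\pi_1\circ\Phi_2=\Phi_0\circ t\circ\pi_1$. So to descend $m$ you need only that pullback along $\Phi_2$ be fully faithful on descent data, and full faithfulness of $\Phi$ plays no role there.
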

\begin{proof}
  By \cite[Corollary 3.3.1]{creurer1999descent} it suffices to show that $\Phi_0,\Phi_1,\Phi_2$ and $\Phi_3$ are effective descent morphisms, where 
  \begin{align*}
      \Phi_2\colon  H_1 \times_{H_0} H_1 & \to K_1 \times_{K_0} K_1, \\
      \Phi_3\colon  H_1 \times_{H_0} H_1 \times_{H_0} H_1 & \to K_1 \times_{K_0} K_1 \times_{K_0} K_1
  \end{align*}
  are the induced maps between composable pairs and composable triples respectively.
  
  Since $\Phi$ is fully faithful, the following diagram is a pullback.\begin{equation}\label{eq:fullfaithfullness}\tag{$\ast$}
    \begin{tikzcd}[ampersand replacement=\&]
	{H_1} \& {K_1} \\
	{H_0 \times H_0} \& {K_0 \times K_0}
	\arrow["{(s,t)}"', from=1-1, to=2-1]
	\arrow["{\Phi_1}", from=1-1, to=1-2]
	\arrow["{(s,t)}", from=1-2, to=2-2]
	\arrow["{\Phi_0 \times \Phi_0}"', from=2-1, to=2-2]
	\arrow["\lrcorner"{anchor=center, pos=0.125}, draw=none, from=1-1, to=2-2]
    \end{tikzcd}
  \end{equation}
  Noting that $\Phi_0 \times \Phi_0$ is an effective descent morphism (since it can be obtained as a composite of $\id_{H_0} \times \Phi_0$ and $\Phi_0 \times \id_{K_0}$, which are pullbacks of $\Phi_0$) we see that $\Phi_1$ is of effective descent too.
  
  To see that $\Phi_2$ is an effective descent morphism we will show that the square
  \[\begin{tikzcd}
	{H_1\times_{H_0}H_1} & {K_1 \times_{K_0} K_1} \\
	{H_0^3} & {H_0^3}
	\arrow["{\Phi_2}", from=1-1, to=1-2]
	\arrow["{\Phi_0^{\times 3}}"', from=2-1, to=2-2]
	\arrow["{(s\pi_1,t\pi_1,t\pi_2)}", from=1-2, to=2-2]
	\arrow["{(s\pi_1,t\pi_1,t\pi_2)}"', from=1-1, to=2-1]
\end{tikzcd}\]
   is a pullback.

   Let $(x_1,x_2,x_3)\colon X \to H_0^3$ and $(f_1,f_2)\colon X \to K_1 \times_{K_0} K_1$ be morphisms such that 
   \[
   \Phi_0^{\times 3} \circ (x_1,x_2,x_3) = (s\pi_1,t\pi_1,t\pi_2) \circ (f_1,f_2)
   \]
   --- that is, $\Phi_0 x_1 = s f_1$, $\Phi_0 x_2 = t f_1$ and $\Phi_0 x_3 = t f_2$. We must show that there is a unique map $(u_1,u_2)\colon X \to H_1 \times_{H_0} H_1$ such that $(s\pi_1,t\pi_1,t\pi_2) \circ (u_1,u_2) = (x_1,x_2,x_3)$ and $\Phi_2 (u_1,u_2) = (f_1,f_2)$ --- that is, $s u_1 = x_1$, $t u_1 = x_2$ and $t u_2 = x_3$, and $\Phi_1 u_1 = f_1$ and $\Phi_1 u_2 = f_2$.
  
  Compose $(x_1,x_2,x_3)$ with the map $(\pi_1,\pi_2)\colon H_0^3 \to H_0^2$ to give $(x_1,x_2)\colon X \to H_0^2$ and $(f_1,f_2)\colon X \to K_1$ with $\pi_1\colon K_1 \times_{K_0} K_1 \to K_1$ to give $f_1$. Note that $\Phi_0 x_1 = s f_1$ and $\Phi_0 x_2 = t f_1$. Thus, by the universal property of the pullback (\ref{eq:fullfaithfullness}) we have a unique map $u_1\colon X \to H_1$ such that $(x_1,x_2) = (s,t) \circ u_1$ (i.e.\ $x_1 = s u_1$ and $x_2 = t u_1$) and $f_1 = \Phi_1 u_1$.

  Similarly, we can compose $(x_1,x_2,x_3)$ with $(\pi_2,\pi_3)\colon H_0^3 \to H_0^2$ to give $(x_2,x_3)$ and $(f_1,f_2)\colon X \to K_1$ with $\pi_2\colon K_1 \times_{K_0} K_1 \to K_1$ to give $f_2$. Then $\Phi_0 x_2 = t f_1 = s f_2$ and $\Phi_0 x_3 = t f_2$ and so again by (\ref{eq:fullfaithfullness}) we have a unique $u_2\colon X \to H_1$ such that $x_2 = s u_2$, $x_3 = t u_2$, $f_2 = \Phi_1 u_2$.

  Now $u_1$ and $u_2$ induce a well-defined map into $H_1 \times_{H_0} H_1$ since $t u_1 = x_2 = s u_2$. Moreover, this map is unique since $u_1$ and $u_2$ were the unique maps satisfying their conditions.
  
  A similar approach shows that $\Phi_3$ is also an effective descent morphism.
\end{proof}

\begin{lemma}\label{lem:disc_opfibrations_descend} %
  Let $\Phi\colon \H \to \K$ be an effective descent morphism in $\Cat(\C)$ such that $\Phi_1$ is effective descent in $\C$. Then discrete opfibrations descend along $\Phi$.
\end{lemma}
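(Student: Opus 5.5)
The plan is to verify the pullback condition directly for a functor over $\K$, and to detect it after pulling back along $\Phi_1$ in $\C$. By the meaning of `descends' used in the paper, we are given an internal functor $Q\colon \D \to \K$. Let $P\colon \mathcal{E} \to \H$ be its pullback along $\Phi$, with projection $\Psi\colon \mathcal{E} \to \D$, and assume $P$ is a discrete opfibration; we must show $Q$ is one. Effective descent of $\Phi$ in $\Cat(\C)$ is what makes this a genuine descent statement: every descent datum for a discrete opfibration over $\H$ comes from some functor $Q$ over $\K$. For the reflection property itself, I expect only the hypothesis on $\Phi_1$ to be needed.

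Pullbacks in $\Cat(\C)$ are computed levelwise, so $E_0 = H_0 \times_{K_0} D_0$ and $E_1 = H_1 \times_{K_1} D_1$. Consider the comparison maps
\[c_{\D} = (Q_1, s)\colon D_1 \to K_1 \times_{K_0} D_0, \qquad c_{\mathcal{E}} = (P_1, s)\colon E_1 \to H_1 \times_{H_0} E_0 .\]
By definition, $Q$ (respectively $P$) is a discrete opfibration precisely when $c_{\D}$ (respectively $c_{\mathcal{E}}$) is an isomorphism. Both maps are morphisms over $K_1$ and $H_1$ respectively, via $Q_1$ and $P_1$ and the first projections.

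The key step is to show that pulling $c_{\D}$ back along $\Phi_1\colon H_1 \to K_1$ gives $c_{\mathcal{E}}$, up to isomorphism. On the domain side this is immediate, since $\Phi_1^* D_1 = H_1 \times_{K_1} D_1 = E_1$. On the codomain side we compute, using $s \circ \Phi_1 = \Phi_0 \circ s$ and the pasting lemma for pullbacks,
\[\Phi_1^*(K_1 \times_{K_0} D_0) \cong H_1 \times_{K_0} D_0 \cong H_1 \times_{H_0} (H_0 \times_{K_0} D_0) = H_1 \times_{H_0} E_0 .\]
It remains to check, via the universal properties (or with generalised points $(h,d) \mapsto (h, s d)$), that under these identifications the pulled-back map is exactly $(P_1, s)$. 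This bookkeeping of canonical isomorphisms is the only place where care is needed; I expect it to be the main, albeit routine, obstacle.

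Finally, $\Phi_1$ is an effective descent morphism in $\C$, so the pullback functor $\Phi_1^*\colon \C/K_1 \to \C/H_1$ is an equivalence onto the category of descent data, and in particular it is conservative. Since $\Phi_1^*(c_{\D}) \cong c_{\mathcal{E}}$ is an isomorphism, $c_{\D}$ is an isomorphism too, and hence $Q$ is a discrete opfibration.
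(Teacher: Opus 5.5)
Your proposal is correct and is essentially the paper's argument. The paper pastes the pullback squares and then cancels the right-hand square using the fact that $\Phi_1$ is a pullback-stable extremal epimorphism (Janelidze--Sobral--Tholen, Proposition~1.6); this is precisely your observation that $\Phi_1^*$ reflects isomorphisms, applied to the comparison map $(Q_1,s)\colon D_1 \to K_1 \times_{K_0} D_0$.
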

\begin{proof}
  Let $p \colon \X \to \K$ be an internal functor and let $p'\colon \X' \to \H$ be the pullback along $\Phi$.  We want to show that if $p'$ is a discrete opfibration, then so is $p$ --- i.e.\  if the commutative square
  \[\begin{tikzcd}
	{X'_1} & {X'_0} \\
	{H_1} & {H_0}
	\arrow["s"', from=2-1, to=2-2]
	\arrow["{p'_0}", from=1-2, to=2-2]
	\arrow["{p'_1}"', from=1-1, to=2-1]
	\arrow["s", from=1-1, to=1-2]
	\arrow["\lrcorner"{anchor=center, pos=0.125}, draw=none, from=1-1, to=2-2]
    \end{tikzcd}\]
  is a pullback, then the square
    \[\begin{tikzcd}
	{X_1} & {X_0} \\
	{K_1} & {K_0}
	\arrow["s"', from=2-1, to=2-2]
	\arrow["{p_0}", from=1-2, to=2-2]
	\arrow["{p_1}"', from=1-1, to=2-1]	\arrow["s", from=1-1, to=1-2]
    \end{tikzcd}\]
  is also a pullback.
  
  We form the composite pullback
  \[\begin{tikzcd}
	{X'_1} & {X'_0} & {X_0} \\
	{H_1} & {H_0} & {K_0} \rlap{\,.}
	\arrow["s"', from=2-1, to=2-2]
	\arrow["{p'_0}"', from=1-2, to=2-2]
	\arrow["{p'_1}"', from=1-1, to=2-1]
	\arrow["s", from=1-1, to=1-2]
	\arrow["\lrcorner"{anchor=center, pos=0.125}, draw=none, from=1-1, to=2-2]
	\arrow["{\Phi_0}"', from=2-2, to=2-3]
	\arrow["{p_0}", from=1-3, to=2-3]
	\arrow["{\Phi'_0}", from=1-2, to=1-3]
	\arrow["\lrcorner"{anchor=center, pos=0.125}, draw=none, from=1-2, to=2-3]
  \end{tikzcd}\]
  This is equal to the composite
  \[\begin{tikzcd}
	{X'_1} & {X_1} & {X_0} \\
	{H_1} & {K_1} & {K_0},
	\arrow["s"', from=2-2, to=2-3]
	\arrow["{{p_0}}", from=1-3, to=2-3]
	\arrow["{{p_1}}"', from=1-2, to=2-2]
	\arrow["s", from=1-2, to=1-3]
	\arrow["{\Phi_1}"', two heads, from=2-1, to=2-2]
	\arrow["{p'_1}"', from=1-1, to=2-1]
	\arrow["{\Phi'_1}", two heads, from=1-1, to=1-2]
	\arrow["\lrcorner"{anchor=center, pos=0.125}, draw=none, from=1-1, to=2-2]
  \end{tikzcd}\]
  which is thus also a pullback, and so the rectangle and the left-hand square in this diagram are pullbacks. Since $\Phi_1$ is a pullback-stable extremal epimorphism, it follows that the right-hand square is also a pullback (see \cite[Proposition 1.6]{Janelidze_Sobral_Tholen_2003}) concluding the proof.
\end{proof}

We feel that the following result must somehow be well-known, especially to experts in stacks. However, we have not been able to locate any proof in the literature. Probably the most relevant paper is \cite{bungepare}, but it refers to a particular special case.
\begin{theorem}\label{prop:bundles_descend}
    If $\Phi \colon \H \to \K$ is an internal functor satisfying the hypothesis of \cref{lem:ff_eso_eff_desc}, then pullback along $\Phi$ induces an equivalence
    \[
    {\bf doFib}(\H) \simeq {\bf doFib}(\K)
    \]
    between the categories of discrete opfibrations over $\H$ and $\K$.
\end{theorem}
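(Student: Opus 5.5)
We will deduce the equivalence from the fact that $\Phi$ is an effective descent morphism in $\Cat(\C)$ (\cref{lem:ff_eso_eff_desc}) together with the descent of discrete opfibrations along $\Phi$ (\cref{lem:disc_opfibrations_descend}). Form the kernel pair $\H \times_\K \H \rightrightarrows \H$ of $\Phi$ in $\Cat(\C)$, with projections $\pi_1,\pi_2$. Since $\Phi$ is of effective descent in $\Cat(\C)$, pullback along $\Phi$ gives an equivalence between $\Cat(\C)/\K$ and the category of objects of $\Cat(\C)/\H$ equipped with descent data relative to this kernel pair. Discrete opfibrations are pullback-stable, and by \cref{lem:disc_opfibrations_descend} an object over $\K$ is a discrete opfibration as soon as its pullback to $\H$ is. The equivalence therefore restricts to one between ${\bf doFib}(\K)$ and discrete opfibrations over $\H$ with descent data. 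It then remains to show that, because $\Phi$ is fully faithful, the forgetful functor from discrete opfibrations with descent data to ${\bf doFib}(\H)$ is itself an equivalence. In other words, every discrete opfibration over $\H$ carries a unique descent datum, and every morphism over $\H$ respects it.

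The key observation concerns the kernel pair. Full faithfulness of $\Phi$ means $\H \times_\K \H$ has objects $H_0 \times_{K_0} H_0$, and its arrows are pairs of arrows of $\H$ with a common image in $K_1$. Each object $(x,y)$ of the kernel pair therefore has a canonical arrow in $\H$ from $x$ to $y$ lying over the identity: this is the map $H_0\times_{K_0}H_0 \to H_1$ obtained from the pullback square in \cref{def:fullyfaithful} applied to $(x,y)$ and $e\Phi_0 x$. These canonical arrows assemble into an internal natural isomorphism $\pi_1 \Rightarrow \pi_2$ of functors $\H\times_\K\H \to \H$. Its inverse comes from the same construction applied to $(y,x)$, and naturality follows from the uniqueness clause of the pullback. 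Moreover, the three pulled-back isomorphisms on $\H\times_\K\H\times_\K\H$ satisfy the cocycle identity, again by uniqueness of factorisations through that square.

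Given a discrete opfibration $q\colon \X \to \H$, we use the pseudofunctoriality of pulling back discrete opfibrations along functors and transformations (exactly as for $\Sstruct$ in \cref{sec:base_change}, via the action/descent datum $\theta$). Transporting along the natural isomorphism $\pi_1 \Rightarrow \pi_2$ gives an isomorphism $\pi_1^*\X \cong \pi_2^*\X$ over $\H\times_\K\H$. The cocycle identity for the transformations becomes the cocycle condition for this isomorphism, and restricting along the diagonal gives the identity. This defines the descent datum. Naturality in $q$ is automatic, so morphisms of discrete opfibrations respect these data.

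The main obstacle will be uniqueness: showing that \emph{every} descent datum on a discrete opfibration over $\H$ coincides with this canonical one. The plan is to compare a given datum $\psi\colon \pi_1^*\X \to \pi_2^*\X$ with the canonical $\theta$. Both are morphisms of discrete opfibrations, so they are equivariant for the arrows of the kernel pair. An object $(x,y)$ is connected by an arrow of $\H\times_\K\H$, namely the pair $(e_x, \text{canonical } x\to y)$, to the diagonal object $(x,x)$. On the diagonal, $\psi$ is the identity by the unit condition. Equivariance then forces $\psi$ to agree with $\theta$ at $(x,y)$. We would make this rigorous with generalised points, reasoning as in the proof of \cref{prop:2-cells_are_simplified}. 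Once this is done, the two restricted equivalences compose to show that pullback ${\bf doFib}(\K) \to {\bf doFib}(\H)$ is an equivalence.
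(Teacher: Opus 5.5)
Your proposal is correct and follows essentially the same route as the paper. You reduce, via \cref{lem:ff_eso_eff_desc} and \cref{lem:disc_opfibrations_descend}, to showing that descent data on a discrete opfibration over $\H$ exist uniquely and are respected by all morphisms, and uniqueness is forced exactly as in the paper by acting with the arrow $(e_x,u_{x,y})\colon (x,x)\to(x,y)$ of $\H\times_\K\H$ and using the unit condition; your packaging of existence as transport along a natural isomorphism $\pi_1\Rightarrow\pi_2$ (with pseudofunctoriality of pullback) is a tidier form of the paper's direct verification, whose one non-routine step $g\circ u_{s(f),s(g)}=u_{t(f),t(g)}\circ f$ is precisely the naturality of that isomorphism.
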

\begin{proof}
    By \cref{lem:ff_eso_eff_desc} we know $\Phi$ is an effective descent morphism of categories. Moreover, discrete opfibrations descend by \cref{lem:disc_opfibrations_descend} and so there is an equivalence between the discrete opfibrations over $\K$ and the discrete opfibrations over $\H$ equipped with `descent data' (i.e.\ an action by the kernel equivalence relation).  We will show that the descent data is superfluous in the sense that it is uniquely induced (and morphisms automatically respect it).
    
    Let $\Pi^{1,2} \colon \H \times_\K \H \rightrightarrows \H$ denote the kernel pair of $\Phi$, i.e.\ $ \H\times_\K \H $ is the category obtained as in the prism of pullbacks
    \[\begin{tikzcd}
	{H_1 \times_{K_1} H_1} &[-30pt]& {H_1} \\
	& {H_0 \times_{K_0} H_0} && {H_0} \\
	{H_1} && {K_1} \\
	& {H_0} && {K_0}
	\arrow["{\Pi^1_1}", from=1-1, to=1-3]
	\arrow["t"{description, pos=0.4}, shift right=2, shorten >=10pt, from=1-1, to=2-2]
	\arrow["s"{description}, shift left=2, shorten <=10pt, from=1-1, to=2-2]
	\arrow["{\Pi^2_1}"', from=1-1, to=3-1]
	\arrow["s"{description}, shift left=2, from=1-3, to=2-4]
	\arrow["t"{description}, shift right=2, from=1-3, to=2-4]
	\arrow["{\Phi_1}"{pos=0.7}, from=1-3, to=3-3]
	\arrow["{\Phi_0}", from=2-4, to=4-4]
	\arrow["{\Phi_1}"{pos=0.7}, from=3-1, to=3-3]
	\arrow["t"{description}, shift right=2, shorten >=0pt, from=3-1, to=4-2]
	\arrow["s"{description}, shift left=2, shorten <=5pt, from=3-1, to=4-2]
	\arrow["t"{description}, shift right=2, from=3-3, to=4-4]
	\arrow["s"{description}, shift left=2, from=3-3, to=4-4]
	\arrow["{\Phi_0}"', from=4-2, to=4-4]
 \arrow["{\Pi^2_0}"'{pos=0.3}, from=2-2, to=4-2, crossing over]
 \arrow["{\Pi^1_0}"{pos=0.3}, from=2-2, to=2-4, crossing over]
\end{tikzcd}\]
That is to say, as generalised points, the objects of $\H \times_\K \H$ are pairs $(a,b) \in H_0^2$ such that $\Phi_0(a) = \Phi_0(b)$, and arrows are pairs $(f,g) \in H_1^2$ such that $\Phi_1(f) = \Phi_1(g)$.  The functors $\Pi^{1,2}$ define an internal groupoid in the category of internal categories $\Cat(\C)$ as in the diagram
\[\begin{tikzcd}
	{\H \times_\K \H \times_\K \H} & {\H \times_\K \H} & \H .
	\arrow["{\Pi^{1,2}}", shift left=3, from=1-1, to=1-2]
	\arrow["{\Pi^{2,3}}"', shift right=3, from=1-1, to=1-2]
	\arrow["{\Pi^{1,3}}"{description}, from=1-1, to=1-2]
	\arrow["{\Pi^1}", shift left=3, from=1-2, to=1-3]
	\arrow["{\Pi^2}"', shift right=3, from=1-2, to=1-3]
	\arrow["\Delta"{description}, from=1-3, to=1-2]
\end{tikzcd}\]

Let $p \colon \X \to \H$ be a discrete opfibration. Then
\[\begin{tikzcd}
	{X_1} & {X_0} \\
	{H_1} & {H_0}
	\arrow["s", from=1-1, to=1-2]
	\arrow["{{p_1}}"', from=1-1, to=2-1]
	\arrow["\lrcorner"{anchor=center, pos=0.125}, draw=none, from=1-1, to=2-2]
	\arrow["{{p_0}}", from=1-2, to=2-2]
	\arrow["s"', from=2-1, to=2-2]
\end{tikzcd}\]
is a pullback.  The target map $t \colon  X_1 \cong X_0 \times_{H_0} H_1 \to X_0$ of $\X$ is precisely an $H_1$-action on the bundle $p_0 \colon X_0 \to H_0$.  Suppose that $p \colon \X \to \H$ is endowed with a descent datum for $\Phi$, or equivalently an action by $\H \times_\K \H$.  This is a functor $\chi \colon \X \times_\H \H \times_\K \H \cong \X \times_\K \H \to \X$ satisfying the action equations from \cref{def:sheaf_over_localic_cat}. We will show that $\chi$ is uniquely determined by $p$.

First, consider the form of the map $\chi_1 \colon X_1 \times_{K_1} H_1 \cong X_0 \times_{H_0} H_1 \times_{K_1} H_1 \to X_1 \cong X_0 \times_{H_0} H_1$.  This map sends a triple $(x,f,g) \in X_0 \times H_1^2$, where $p_0(x) = s(f)$ and $\Phi_1(f) = \Phi_1(g)$, to a pair $(y,h) \in X_0 \times_{H_0} H_1$.  We claim that $\chi_1$ is uniquely determined by $\chi_0$ and $p$.

Firstly, as $\chi$ is a functor, $\chi_0 s = s \chi_1$, and so we have that
\[
   \chi_0 s(x,f,g)  = s \chi_1 (x,f,g) = s(y,h) = y.
\]
Secondly, since $\chi$ is an action, the triangle
\[\begin{tikzcd}
	{X_1 \times_{K_1} H_1} && {X_1} \\
	& {H_1}
	\arrow["{{\chi_1}}", from=1-1, to=1-3]
	\arrow["{{\pi_2}}"', from=1-1, to=2-2]
	\arrow["{{p_1}}", from=1-3, to=2-2]
\end{tikzcd}\]
commutes.  But as $p_1 \colon X_1 \cong X_0 \times_{H_0} H_1 \to H_1$ is just the second projection, we have that
\[
    h = p_1 (y,h) = p_1 \chi_1(x,f,g) = \pi_2(x,f,g) = g.
\]
Hence, $\chi_1(x,f,g) = (\chi_0 s(x,f,g), g)$.

We now show that $\chi_0 \colon X_0 \times_{H_0} H_0 \times_{K_0} H_0 \to X_0$ is uniquely determined by $p$.  Once again using that $\chi$ is a functor, we have that $\chi_0 t = t \chi_1$, and so combining with the above we have that
\begin{align*}
   \chi_0(t(x,f),t(f),t(g)) & = \chi_0 t(x,f,g) , \\
   & =t\chi_1(x,f,g), \\
   &= t(\chi_0 s(x,f,g), g) = t(\chi_0(x,s(f),s(g)),g). 
\end{align*}
Let $(x,a,b) \in X_0 \times_{H_0} H_0 \times_{K_0} H_0$, i.e.\ $a = p_0(x)$ and $\Phi_0 a = \Phi_0 b$.  As $\Phi$ is fully faithful, there exists a unique arrow $a \xrightarrow{u_{a,b}} b \in H_1$ such that $\Phi_1(u_{a,b}) = \id_{\Phi_0 b}$.  Hence, by above,
\begin{align*}
    \chi_0 (x,a,b) & = \chi_0(t(x,\id_{p_0(x)}),t(\id_{p_0(x)}), t(u_{a,b})) , \\
    & = t(\chi_0(x,s(\id_{p_0(x)}),s(u_{a,b})),u_{a,b}) = t(\chi_0(x,p_0(x),p_0(x)),u_{a,b}).
\end{align*}
Now, since $\chi$ defines an action, we have that $\chi_0(x,p_0(x),p_0(x)) = x$.  Therefore, we can conclude $\chi_0(x,a,b) = t(x,u_{a,b})$.

Thus, a discrete opfibration $p \colon \X \to \H$ admits at most one descent datum.  That the definitions $\chi_0(x,a,b) = t(x,u_{a,b})$ and $\chi_1(x,f,g) = (t(x,u_{s(f),s(g)}),g)$ indeed yield a functor $\chi$ and a valid action is, for the most part, routine and we omit the details.  One part that requires a little more thought is checking that $t \chi_1 = \chi_0 t$. This becomes
  \[t(t(x,u_{s(f),s(g)}),g) = t(t(x,f),u_{t(f),t(g)}).\]
  Since $t \colon X_1 \cong X_0 \times_{H_0} H_1 \to X_0$ is an action by $H_1$, this is equivalent to
  \[t(x,g \circ u_{s(f),s(g)}) = t(x,u_{t(f),t(g)} \circ f).\]
  Thus, it suffices to show $g \circ u_{s(f),s(g)} = u_{t(f),t(g)} \circ f$.
  In fact, using that fact that $\Phi_1$ is (fully) faithful, we only need to check that $\Phi_1(g \circ u_{s(f),s(g)}) = \Phi_1(u_{t(f),t(g)} \circ f)$.
  By assumption, $\Phi_1(u_{s(f),s(g)}) = \id_{\Phi_0 s(g)}$, $\Phi_1(u_{t(f),t(g)}) = \id_{\Phi_0 t(g)}$ and $\Phi_1(f) = \Phi_1(g)$.  Hence, the desired equality holds.

Finally, since any morphism of discrete opfibrations $q \colon \X \to \X'$ commutes with the target map and by the universal property of the pullback used in the definition of $u_{a,b}$, every such morphism $q$ respects the uniquely determined descent data.
\end{proof}
As a consequence of \cref{prop:bundles_descend}, $\Sigma$-structures descend along weak equivalences of localic categories, completing the proof of \cref{prop:mod_extends_to_anafunctors}.  We therefore deduce the following.
\begin{corollary}\label{cor:Tmod_extends}
    Let $\theory$ be a geometric theory and $\theory'$ a dual geometric theory.
    \begin{enumerate}
        \item The pseudofunctor $\Tmod^\rmLH(-) \colon \LocCat\op \to \CAT$ sends weak equivalences of localic categories to equivalences in $\CAT$, yielding a pseudofunctor 
        \[\Tmod^\rmLH(-) \colon \LocCatAna\op \to \CAT.\]
        \item The pseudofunctor $\Tpmod^\rmPS(-) \colon \LocCat\op \to \CAT$ sends weak equivalences of localic categories to equivalences in $\CAT$, yielding a pseudofunctor 
        \[\Tpmod^\rmPS(-) \colon \LocCatAna\op \to \CAT.\]
    \end{enumerate}
\end{corollary}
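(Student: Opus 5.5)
The plan is to reduce the statement to \cref{prop:mod_extends_to_anafunctors}, that is, to \cref{prop:bundles_descend} applied sort-by-sort and relation-by-relation, and then to check that the equivalence $\Sstruct(\Xi)\colon \Sstruct(\H) \simeq \Sstruct(\widetilde{\H})$, for a weak equivalence $\Xi\colon \widetilde{\H} \to \H$, restricts to equivalences between $\Tmod^\rmLH(\H)$ and $\Tmod^\rmLH(\widetilde{\H})$, and between $\Tpmod^\rmPS(\H)$ and $\Tpmod^\rmPS(\widetilde{\H})$. Once these restrictions are equivalences, the extension to $\LocCatAna\op$ follows from \cite[Theorem 21]{pronk1996bicategories} exactly as in \cref{prop:mod_extends_to_anafunctors}. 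The restricted pseudofunctors are already defined on $\LocCat\op$ by the corollary to \cref{lem:change_of_base_for_formulae}.

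Since $\Tmod^\rmLH(\H)$ and $\Tpmod^\rmPS(\H)$ are full subcategories of $\Sstruct(\H)$ (model homomorphisms being just structure homomorphisms), full faithfulness of the restriction is inherited from $\Sstruct(\Xi)$. It therefore remains to prove essential surjectivity: if $M$ is a $\Sigma$-structure over $\H$ with $\Xi^*M$ an LH-model (resp.\ PS-model), then $M$ is already such a model. I would split this into three properties, each proved by descent along $\Xi_0$, which is an effective descent morphism in $\Loc$.
\begin{enumerate}
\item Each carrier $A^M \to H_0$ is a local homeomorphism (resp.\ proper separated). Its pullback along $\Xi_0$ is the carrier $A^{\Xi^*M}$, so I need that local homeomorphisms, and proper separated maps, descend along effective descent morphisms of locales.
\item Each $R^M$ is an open (resp.\ closed) sublocale. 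Its pullback along the effective descent map $\Xi_0^* \colon A_1^{\Xi^*M}\times\dots \to A_1^M\times\dots$ (a pullback of $\Xi_0$) is open/closed, and openness and closedness of sublocales should descend as well.
\item The axioms hold. By \cref{lem:change_of_base_for_formulae}, $\phi^{\Xi^*M} = \Xi^*\phi^M$, and pulling back sublocales along an effective descent morphism (a pullback-stable regular epi) reflects inclusions. So $\Xi^*\phi^M \subseteq \Xi^*\psi^M$ gives $\phi^M \subseteq \psi^M$.
\end{enumerate}

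The main obstacle is the descent claims in steps 1 and 2 for arbitrary effective descent morphisms of locales rather than open or proper surjections. For step 1, I would first try the internal-logic characterisation: local homeomorphisms over $X$ are the internal sets of $\Sh(X)$, and proper separated maps are the internal compact Hausdorff locales. Then I would use that these are exactly the objects of $\Loc/X$ whose classifying data (open diagonal and open map; proper diagonal and proper map) are preserved and reflected by pullback along effective descent morphisms. This argument applies both to the map and to its diagonal, since the diagonal of a pullback is the pullback of the diagonal. If a direct reflection argument fails, I would fall back on the fact that in our applications $\Xi_0$ can be taken to be an open or proper surjection (triquotient), where such descent results are classical \cite{joyal1984galois,vermeulen1994proper,plewe1997localic}. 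For step 2, open and closed sublocales correspond to maps to $\Srpnsk$, so I would use that $\Xi_0$ is a regular epimorphism and descend the classifying map to $\Srpnsk$ via the descent datum.
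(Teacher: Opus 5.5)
Your decomposition is the paper's. You reduce to \cref{prop:mod_extends_to_anafunctors} and note that the model categories are full in $\Sstruct(-)$. You then check that being an LH-/PS-structure and satisfying the axioms pass to the descended structure, the latter because pulling back along an effective descent morphism reflects inclusions of sublocales.

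The one place where your proposal does not yet amount to a proof is step 1. What is needed there is that open maps and proper maps descend along \emph{arbitrary} effective descent morphisms of locales. You would apply this to each carrier and, as you note, to its diagonal. The paper does not prove this; it cites it as \cite[Proposition 3.6]{henry_townsend_classifying}.

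Your internal-logic sketch does not establish this fact. It assumes that openness and properness are reflected by pullback along effective descent morphisms, which is the claim itself. Your fallback to open or proper (triquotient) surjections also does not suffice. $\LocCatAna$ inverts every fully faithful functor whose object map is an arbitrary effective descent morphism. Restricting the class of $\Xi_0$ would therefore prove something about a different localisation, not the corollary as stated. Citing (or proving) the general descent result closes the gap.

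For step 2, your argument via classifying maps into $\Srpnsk$ is a valid and more elementary alternative to invoking descent of open/proper maps for the sublocale inclusions. The two pullbacks of $R^{\Xi^*M}$ to the kernel pair of $q$ coincide, so its classifying map factors through the regular epimorphism $q$. Pulling back along $q$ then reflects equality of sublocales.
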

\begin{proof}
    It remains to show that, if the $\Sigma$-structure $M$ is an LH-structure (respectively, PS-structure), or a model of the geometric theory $\theory$ (resp., dual geometric theory $\theory'$), this is inherited by the descended $\Sigma$-structure $M'$.

    The former holds since both open and proper morphisms descend along effective descent morphisms of locales (see \cite[Proposition 3.6]{henry_townsend_classifying}).  The latter is immediate as $\phi^M \subseteq \psi^M$ if and only if $\phi^{M'} \subseteq \psi^{M'}$.
\end{proof}

\section{The universal property of the generic bundles}\label{sec:univ_prop}

In this section, we arrive at the main result of the paper.  By combining the ingredients developed above, we will demonstrate that the models constructed in \cref{subsec:generic_model} are indeed the \emph{generic models}, in a sense that we now explain.

We will show that the localic category $\ReprG^\rmLH(\theory)$ \emph{classifies} the models for the geometric theory $\theory$ in that it is a representing object for the pseudofunctor
\[
\Tmod^{\rmLH}(-) \colon \LocCatAna\op \to \CAT
\]
that sends a localic category to the category of LH-models of $\theory$ over it (see \cref{sec:base_change,cor:Tmod_extends}). Moreover, the generic model $\ReprE^\rmLH(\theory)$ is the universal element. By following an entirely analogous argument, it can be shown that, given a dual geometric theory $\theory'$, $\Tpmod^\rmPS(-)$ has a representing object in $\ReprG^\rmPS(\theory')$ with universal element $\ReprE^\rmPS(\theory')$.  We will also discuss how to obtain representing \emph{groupoids} for the restricted pseudofunctors $\Tmod^\rmLH_{\cong}(-), \Tpmod^\rmPS_{\cong}(-) \colon \LocGrpdAna\op \to \CAT$.

The proof involves constructing an essentially surjective and fully faithful functor 
\[\overline{\zeta}_\H \colon \LocCatAna(\H,\ReprG^\rmLH(\theory)) \to \Tmod^\rmLH(\H),\]
for each localic category $\H$.  

Recall that for all LH-models $M$ of $\theory$ over a localic category $\K$, pulling back $M$ along an internal functor $\Phi \colon \H \to \K$ yields an LH-model $\Phi^\ast M$ of $\theory$ over $\H$.  Let $\zeta_\H \colon \LocCat(\H,\ReprG^\rmLH(\theory)) \to \Tmod^\rmLH(\H)$ denote the functor that sends an internal functor $\Phi \colon \H \to \ReprG^\rmLH(\theory)$ to the model $\Phi^\ast \ReprE^\rmLH(\theory)$, where $\ReprE^\rmLH(\theory)$ is the `generic model' constructed in \cref{subsec:generic_model}.  Since LH-models of $\theory$ descend along weak equivalences by \cref{cor:Tmod_extends}, the functor $\zeta_H$ extends to $\LocCatAna$ to give a functor
\[
\overline{\zeta}_\H \colon \LocCatAna(\H,\ReprG^\rmLH(\theory)) \to \Tmod^\rmLH(\H).
\]
(Indeed, $\overline{\zeta}_\H(\Phi)$ is simply $\Tmod^\rmLH(\Phi)(\ReprE^\rmLH(\theory))$.)

The intuition behind why $\overline{\zeta}_\H$ is an equivalence, or at least essentially surjective, is easiest to grasp in the case that $\H$ is the terminal localic category $\1$.  Recall that the points of $\ReprG^\rmLH(\theory)_0$ are all the subcountable models of $\theory$.  Given a model $M$ of $\theory$ over $\1$, i.e.\ an ordinary model in $\Set$, we wish to find a corresponding map from $1$ to $\ReprG^\rmLH(\theory)_0$.  If $M$ is subcountable, this is immediate.  If $M$ is not subcountable, we can `force' it to be so by performing a change of base, using \cref{prop:every_set_is_subcount}.  It is this change of base that necessitates the consideration of anafunctors.

\begin{proposition}\label{prop:ess_surj}
    The functor $\overline{\zeta}_\H \colon \LocCatAna(\H,\ReprG^\rmLH(\theory)) \to \Tmod^{\rmLH}(\H)$ is essentially surjective for each localic category $\H$.
\end{proposition}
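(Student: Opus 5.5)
Given an LH-model $M$ of $\theory$ over $\H$, I will build an anafunctor $\H \xleftarrow{\Xi} \widetilde{\H} \xrightarrow{\Phi} \ReprG^\rmLH(\theory)$ with $\Phi^\ast \ReprE^\rmLH(\theory) \cong \Xi^\ast M$ over $\widetilde{\H}$. By \cref{cor:Tmod_extends} this gives $\overline{\zeta}_\H(\Xi,\Phi)\cong M$. By \cref{rem:make_everything_singlesorted} I may assume $\theory$ is single-sorted, with carrier $p\colon A^M \to H_0$ a local homeomorphism.

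\textbf{Step 1: force the carrier to be subcountable.} Put $\widetilde{H}_0 = [\N \paronto_{H_0} A^M]$. By \cref{prop:every_set_is_subcount} the map $\Xi_0\colon \widetilde{H}_0 \to H_0$ is an open surjection, hence of effective descent. Let $\widetilde{\H}$ be the induced fully faithful category of \cref{rem:ff_induced_domain}, so that $\Xi$ is a surjective weak equivalence. Over $\widetilde{H}_0$ there is a generic partial surjection $f\colon \underline{\N} \paronto \Xi_0^\ast A^M$, internal to $\Sh(\widetilde{H}_0)$.

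\textbf{Step 2: define $\Phi_0$.} Work internally in $\Sh(\widetilde{H}_0)$. The partial surjection $f$ induces a partial equivalence relation on $\N$: $n\sim m$ holds iff $f(n)$ and $f(m)$ are both defined and equal. Each relation symbol $R$ gives the propositions $[(n_1,\dots,n_k)\in R]$, true iff $(f(n_1),\dots,f(n_k))\in R^{\Xi^\ast M}$. These truth values are open sublocales of $\widetilde{H}_0$, so they define a frame map out of the presentation in \cref{def:repr-grpd-LH}.

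To check the axioms, I will show by induction on formulas, as in \cref{prop:generic_bundle_is_LH_model}, that $\phi_{n_1,\dots,n_k}$ is sent to the truth of $\phi(f(n_1),\dots,f(n_k))$ in $\Xi_0^\ast M$. The existential case uses surjectivity of $f$. Soundness of $\Xi^\ast M$ then gives the axioms of $\theory$. This yields $\Phi_0\colon \widetilde{H}_0 \to \ReprG^\rmLH(\theory)_0$.

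\textbf{Step 3: define $\Phi_1$ and the action.} A generalised point of $\widetilde{H}_1$ consists of an arrow $h$ of $\H$ together with partial surjections $f_1, f_2$ at its source and target. I send it to the homomorphism given by
\[
[\alpha(n)=m] \;\mapsto\; \bigl(f_1(n),\, f_2(m)\ \text{defined and}\ h\cdot f_1(n) = f_2(m)\bigr).
\]
This is an open condition, since the diagonal of a local homeomorphism is open. The axioms of $\ReprG^\rmLH(\theory)_1$ hold because the action of $h$ is functional and preserves relations. Functoriality follows from the action axioms.

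\textbf{Step 4: identify the models.} Define a map $\Xi_0^\ast A^M \to \Phi_0^\ast A^{\ReprE}$ sending $a$ to the class $[\equiv n]$ for any $n$ with $f(n)=a$. It is well defined and invertible because $\Phi_0^\ast A^{\ReprE}$ is the externalisation of $\N/{\sim}$ (\cref{prop:each-generic-sort-is-LH}), and $\N/{\sim}\cong \Xi_0^\ast A^M$ internally via $f$. This isomorphism respects the relations by construction and is equivariant by the definition of $\Phi_1$.

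\textbf{Main obstacle.} I expect the hardest part to be making Steps 3 and 4 rigorous at the level of frame presentations. In particular, the isomorphism of Step 4 must be shown to be a genuine locale isomorphism compatible with the actions. I would carry this out in the internal logic of $\Sh(\widetilde{H}_1)$, using generalised points, and then externalise. This avoids verifying the frame relations by hand.
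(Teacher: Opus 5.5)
Your proposal is correct and is essentially the paper's proof. The paper likewise reduces to one sort and takes as the left leg of the anafunctor the open surjection $[\N \paronto_{H_0} X]\twoheadrightarrow H_0$ with its induced fully faithful category; it then sends an arrow $h$ to the homomorphism $\beta(-,h)$ transported along the partial surjections, and identifies the two pullbacks by transporting classes $\bar n$ across $\N/{\sim}\cong p^{-1}(h)$, with your version merely spelling out at the level of generators what the paper states via generalised points. One caveat for your Step~2 induction: read literally, the existential clause of \cref{def:repr-grpd-LH} makes your claim fail (e.g.\ $(\exists y.\,\top)_{()}=\top$ even when the carrier is not inhabited), so use the relativised clause $\bigvee_{m}[m\sim m]\wedge\chi'_{\vec n,m}$ instead; this is what the image computation in \cref{prop:generic_bundle_is_LH_model} actually produces, and with it surjectivity of $f$ handles the existential case exactly as you say.
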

\begin{proof}
    Assume for simplicity that $\theory$ is single-sorted.  Let $M$ be an LH-model of $\theory$ with underlying carrier $(X, p, \beta)$.  For each (generalised) point $h \in H_0$, the fibre $p^{-1}(h)$ is a $\theory$-model.  Recall that there is an open surjection $[\N \paronto_{H_0} X] \twoheadrightarrow H_0$, and that a point of $[\N \paronto_{H_0} X]$ is a pair consisting of $h \in H_0$ and a partial surjection from $\N$ onto the fibre $p^{-1}(h)$, or equivalently a choice of an isomorphism $\N /{\sim} \cong p^{-1}(h)$ where $\sim$ is a partial equivalence relation on $\N$.  A point of ${\ReprG^\rmLH(\theory)}_0$ is a subquotient of $\N$ with the structure of a $\theory$-model.  Thus, there is an evident span
    \[ H_0 \xleftarrow{\quad} [\N \paronto_{H_0} X] \xrightarrow{\quad} {\ReprG^\rmLH(\theory)}_0 \]
    where the map $[\N \paronto_{H_0} X] \to {\ReprG^\rmLH(\theory)}_0$ sends the pair $(h, \N/{\sim} \cong p^{-1}(h))$ to the subquotient $\N/{\sim} \vDash \theory$ with the induced $\theory$-model structure.

    The open surjection $[\N \paronto_{H_0} X] \to H_0$ induces a localic category $(\N \paronto_{\H} X)$ with objects $[\N \paronto_{H_0} X]$ so that the map $[\N \paronto_{H_0} X] \to H_0$ becomes the action on objects of a fully faithful surjective-on-objects functor $(\N \paronto_\H X) \to \H$ as in \cref{rem:ff_induced_domain}.
    
    A point of $(\N \paronto_\H X)_1$ is a triple $(h_1 \xrightarrow{f} h_2, \N/{\sim}_1 \cong p^{-1}(h_1), \N/{\sim}_2 \cong p^{-1}(h_2))$ where $f$ is a point of $H_1$, i.e.\ an arrow between the points $h_1, h_2 \in H_0$.
    Such an arrow $f$ induces a map between the fibres 
    \[\N/{\sim}_1 \cong p^{-1}(h_1) \xrightarrow{\beta({-},f)} p^{-1}(h_2)\cong \N/{\sim}_2.\] 
    Recall that this map is a homomorphism of the $\theory$-model structure on each fibre.  Therefore, $\beta({-},f)$ yields a homomorphism between $\theory$-models presented as subquotients of $\N$, which is a point of the locale ${\ReprG^\rmLH(\theory)}_1$.  This yields a locale morphism $(\N \paronto_\H X)_1 \to {\ReprG^\rmLH(\theory)}_1$, which, together with the morphism on objects $[\N \paronto_{H_0} X] \to {\ReprG^\rmLH(\theory)}_0$, defines an internal functor of localic categories $(\N \paronto_\H X) \to \ReprG^\rmLH(\theory)$.
    Showing that this is indeed a functor is routine.
    
    Thus, we have constructed an anafunctor $\H \slashedrightarrow \ReprG^\rmLH(\theory)$.  It remains to prove that the image of $\H \slashedrightarrow\ReprG^\rmLH(\theory)$ under the functor $\overline{\zeta}_\H$ is indeed isomorphic to the model $M$.  For this, it suffices to show that there is a locale $E'$ and morphisms making both squares in the diagram
    \[\begin{tikzcd}
	X & {E'} & {\ReprE^\rmLH(\theory)} \\
	{H_0} & {[\N \paronto_{H_0} X]} & {\ReprG^\rmLH(\theory)_0}
	\arrow["p"', from=1-1, to=2-1]
	\arrow[from=1-2, to=1-1]
	\arrow[from=1-2, to=1-3]
	\arrow[from=1-2, to=2-2]
	\arrow[from=1-3, to=2-3]
	\arrow[""{name=0, anchor=center, inner sep=0}, from=2-2, to=2-1]
	\arrow[""{name=1, anchor=center, inner sep=0}, from=2-2, to=2-3]
	\arrow["\lrcorner"{anchor=center, pos=0.125, rotate=-90}, draw=none, from=1-2, to=0]
	\arrow["\lrcorner"{anchor=center, pos=0.125}, draw=none, from=1-2, to=1]
\end{tikzcd}\]
    pullbacks, with compatible actions and $\theory$-model structure.

    Define $E'$ to be the pullback
    \[
    \begin{tikzcd}
       E' \ar{d} \ar{r} \ar[draw = none]{rd}[anchor = center, pos = 0.125]{\lrcorner}& \ReprE^\rmLH(\theory) \ar{d} \\
       {[\N \paronto_{H_0} X]} \ar{r} & \ReprG^\rmLH(\theory)_0 \rlap{\,.}
    \end{tikzcd}
    \]
    A point of $E'$ is a pair of a point $(\N/{\sim} \cong p^{-1}(h)) \in [\N \paronto_{H_0} X]$ and an equivalence class of natural numbers $\overline{n} \in \N/{\sim}$. %
    Similarly, a point in the pullback
    \[
    \begin{tikzcd}
       E'' \ar{d} \ar{r} \ar[draw = none]{rd}[anchor = center, pos = 0.125]{\lrcorner} & X \ar{d} \\
        {[\N \paronto_{H_0} X]} \ar{r} & H_0
    \end{tikzcd}
    \]
    is a pair of a point $(\N/{\sim} \cong p^{-1}(h)) \in [\N \paronto_{H_0} X]$ and a point of the fibre $x \in p^{-1}(h)$.
    Transporting $\overline{n} \in \N/{\sim}$ across the isomorphism $\N/{\sim} \cong p^{-1}(h)$ yields the desired isomorphism $E' \cong E''$.  This same translation between a point of the fibre $x \in p^{-1}(h)$ and an equivalence class $\overline{n} \in \N/{\sim}$ across the isomorphism $\N/{\sim} \cong p^{-1}(h)$ also witnesses the fact that $E'$ and $E''$ have the same $(\N \paronto_\H X)_1$-action and carry the same $\theory$-model structure, for which we omit the details.
\end{proof}

\begin{example}
    In \cref{prop:ess_surj}, we have argued in terms of classifying theories and generalised points.  To further illustrate some of the details, we include here an explicit description of the action on objects $\1 \xleftarrow{} [\N \paronto X] \to \ReprG^\rmLH(\theory)_0$ of the representing anafunctor $\1 \slashedrightarrow \ReprG^\rmLH(\theory)$ for a set-based $\theory$-model $M$ (i.e.\ an LH-model over $\1$) with underlying carrier $X$.

    The locale morphism $[\N \paronto X] \to \ReprG^\rmLH(\theory)_0$ constructed in \cref{prop:ess_surj} acts on the generators of $\ReprG^\rmLH(\theory)_0$ by:
    \begin{align*}
        [n \sim m] & \mapsto \bigvee_{x \in X} [f(n) = x] \land [f(m) = x], \\
        [(n_1, \dots , n_\ell) \in R] & \mapsto \bigvee_{(x_1, \dots , x_\ell) \in R^M } [f(n_1) = x_1] \land \dots \land [f(n_\ell) = x_\ell].
    \end{align*}
    The pullback
    \[
    \begin{tikzcd}
       E' \ar{d} \ar{r} \ar[draw = none]{rd}[anchor = center, pos = 0.125]{\lrcorner}& \ReprE^\rmLH(\theory) \ar{d} \\
       {[\N \paronto X]} \ar{r} & \ReprG^\rmLH(\theory)_0
    \end{tikzcd}
    \]
    has a presentation with generators: $[\equiv n]$ for each $n \in \N$, $[f(n) = x]$ for each $n \in \N$ and $x \in X$, $[n \sim m]$ for each $n, m \in \N$, and $[(n_1, \dots , n_\ell) \in R]$ for each $\ell$-ary relation symbol $R$ and $n_1, \dots , n_\ell \in \N$.  In addition to those relations coming from $[\N \paronto X]$ and $\ReprE^\rmLH(\theory)$, we add the relations $[n \sim m] = \bigvee_{x \in X} [f(n) = x] \land [f(m) = x]$ and $[(n_1, \dots , n_\ell) \in R] =  \bigvee_{(x_1, \dots , x_\ell) \in R^M } [f(n_1) = x_1] \land \dots \land [f(n_\ell) = x_\ell]$.

    Meanwhile, the pullback 
    \[
    \begin{tikzcd}
       E'' \ar{d} \ar{r} \ar[draw = none]{rd}[anchor = center, pos = 0.125]{\lrcorner} & X \ar{d} \\
        {[\N \paronto X]} \ar{r} & \1
    \end{tikzcd}
    \]
    has a presentation where the generators are $[=x]$ for each $x \in X$ and $[f(n)=x]$ for each $n \in \N$ and $x \in X$, subject to the relations coming from $[\N \paronto X]$ and $X$.

    The isomorphism $E' \cong E''$ described in \cref{prop:ess_surj} acts on generators by
    \begin{align*}
        [f(n) = x] & \mapsto [f(n) = x] , \\
        [=x] & \mapsto \bigvee_{n \in \N} [f(n) = x] \land [\equiv n].
    \end{align*}
    and its inverse acts on generators by
    \begin{align*}
        [f(n) = x] & \mapsto [f(n) = x] , \\
        [\equiv n] & \mapsto \bigvee_{x \in X} [f(n) = x] \land [=x].
    \end{align*}
    (The action of this inverse frame homomorphism on $[n \sim m]$ and $[(n_1, \dots , n_\ell) \in R]$ is then forced by the relations.)
\end{example}

We now argue that $\overline{\zeta_\H}$ is fully faithful. We start by the considering the functor $\zeta_\H$.
\begin{lemma}\label{lemma:f+f_with_common_domain}
    The functor $\zeta_\K \colon \LocCat(\K,\ReprG^\rmLH(\theory)) \to \Tmod^{\rmLH}(\K)$ is full and faithful for each localic category $\K$.
\end{lemma}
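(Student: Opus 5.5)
The idea is to compare both hom-categories with data that can be read off from the presentations of $\ReprG^\rmLH(\theory)_0$, $\ReprG^\rmLH(\theory)_1$ and $\ReprE^\rmLH(\theory)$. Morphisms in $\LocCat(\K,\ReprG^\rmLH(\theory))$ are internal transformations $a\colon \Phi\Rightarrow\Psi$, i.e.\ maps $a\colon K_0\to\ReprG^\rmLH(\theory)_1$ with $s a=\Phi_0$, $t a=\Psi_0$, plus a naturality square. By the presentation of \cref{def:generic-LH-localic-category}, such a map is exactly a family of opens $a^*[\alpha^A(p)=q]$ of $K_0$, one for each sort $A$ and $p,q\in\N$, satisfying the functionality, totality and relation-preservation axioms relative to the two copies $\Phi_0^*[p\sim^A p']$ and $\Psi_0^*[q\sim^A q']$. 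In other words, it is an internal-to-$\Sh(K_0)$ homomorphism of $\theory$-models between the subquotients $\N/{\sim_\Phi}$ and $\N/{\sim_\Psi}$ (\cref{rem:points_of_classifying_cat}). On the other side, a model homomorphism $\Phi^*\ReprE^\rmLH(\theory)\to\Psi^*\ReprE^\rmLH(\theory)$ is a map of LH-bundles over $K_0$, i.e.\ a morphism in $\Sh(K_0)$ between the pulled-back subquotients. By \cref{prop:each-generic-sort-is-LH} together with the base-change stability of \cref{rem:presentation_base_change}, these are the objects $\N/{\sim_\Phi}$ and $\N/{\sim_\Psi}$ in $\Sh(K_0)$. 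Such a morphism must additionally preserve the relations and be equivariant for the $K_1$-actions.

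First I check that $\zeta_\K$ sends $a$ to the map $a^*_{\ReprE}$ built in \cref{sec:base_change}. In point-set terms this map is $\overline{p}\mapsto\overline{\alpha(p)}$, where $\alpha$ is the homomorphism classified by $a$; this follows from the action $\theta^*[\equiv q]=\bigvee_p[\equiv p]\wedge[\alpha(p)=q]$. Faithfulness then follows because, internally in $\Sh(K_0)$, a functional relation between subquotients of $\N$ is determined by the function it induces. Concretely, $a^*[\alpha^A(p)=q]$ is recovered from $a^*_{\ReprE}$ as the open of $K_0$ on which $[\equiv p]$ is sent into $[\equiv q]$. More formally, it is the image under the open (étale) projection to $K_0$ of the open $[\equiv p]\wedge (a^*_\ReprE)^*[\equiv q]$ of $\Phi^*A^{\ReprE}$.

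For fullness, I start from a homomorphism $\tau\colon\Phi^*\ReprE^\rmLH(\theory)\to\Psi^*\ReprE^\rmLH(\theory)$ and define the map $a\colon K_0\to\ReprG^\rmLH(\theory)_1$ on generators. I send the two copies of the $\ReprG_0$-generators via $\Phi_0^*$ and $\Psi_0^*$ respectively, and set
\[a^*[\alpha^A(p)=q]=(\pi_A)_!\bigl([\equiv p]\wedge\tau_A^*[\equiv q]\bigr),\]
where $\pi_A\colon\Phi^*A^\ReprE\to K_0$ is the local homeomorphism. The axioms of $\ReprG^\rmLH(\theory)_1$ then have to be verified. Functionality uses that $\tau_A$ is a function together with the relation $[\equiv q]\wedge[\equiv q']\vdash[q\sim q']$. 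Totality uses $[\equiv p]\vdash[p\sim p]$ and $\top\vdash\bigvee_q[\equiv q]$, pushed forward by $(\pi_A)_!$ using Frobenius. Preservation of relations uses that $\tau$ preserves $R$, via the description of $R^\ReprE$ in \cref{def:generic-LH-bundle}. The cleanest route is to work internally in $\Sh(K_0)$, where all of this is the statement that a function $\N/{\sim_1}\to\N/{\sim_2}$ corresponds to its graph. Then $s a=\Phi_0$ and $t a=\Psi_0$ hold by construction, and $\zeta_\K(a)=\tau$ by the faithfulness computation run in reverse.

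The main obstacle is naturality of $a$, i.e.\ the square of \cref{df:transformation}, which is a condition on $K_1$ rather than $K_0$. I would derive it from equivariance of $\tau$. Both composites $m\circ(a s,\Psi_1)$ and $m\circ(\Phi_1,a t)$ are maps $K_1\to\ReprG^\rmLH(\theory)_1$ with the same source and target components. By the faithfulness argument applied over the base $K_1$ (with $\K$ replaced by the discrete category on $K_1$), it therefore suffices to check that they induce the same map of bundles over $K_1$. These induced maps are the two composites $\Psi_1$-action after $s^*\tau$ and $t^*\tau$ after $\Phi_1$-action, which agree precisely because $\tau$ is $K_1$-equivariant. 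Here I must make sure that the composition formula $m^*[\alpha(p)=r]=\bigvee_q[\beta(p)=q]\wedge[\gamma(q)=r]$ matches composition of induced bundle maps, which is again the relational-composite computation.
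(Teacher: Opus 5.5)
Your proposal is correct and is essentially the paper's argument. Both identify maps $K_0\to\ReprG^\rmLH(\theory)_1$ with fibrewise homomorphisms between the subquotient models. Both get faithfulness by recovering $\alpha$ from the induced bundle map using $\top\vdash\bigvee_{n}[\equiv n]$, and fullness by sending $\sigma$ to $k\mapsto\sigma_k$; you simply write this out at the level of frame generators via $(\pi_A)_!$. What you add is an explicit derivation of naturality of the constructed transformation from the $K_1$-equivariance of $\tau$, via faithfulness over the categorically discrete category on $K_1$ — a step the paper's proof leaves implicit.
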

\begin{proof}
    Recall that if $\Phi \colon \K \to \ReprG^\rmLH(\theory)$ is an internal functor, then $\zeta_\K(\Phi)$ is the $\theory$-model given by the pullback
    \[
    \begin{tikzcd}
        \Phi^\ast \ReprE^\rmLH(\theory) \ar{r} \ar{d} \ar[draw = none]{rd}[anchor = center, pos = 0.125]{\lrcorner} & \ReprE^\rmLH(\theory) \ar{d} \\
        K_0 \ar{r}{\Phi_0} & \ReprG^\rmLH(\theory)_0.
    \end{tikzcd}
    \]
    A (generalised) point of $\ReprG^\rmLH(\theory)_0$ is a model of $\theory$ presented as a subquotient of $\N$, and a point of $\ReprE^\rmLH(\theory)$ is some equivalence class $\bar{n}$ contained in that subquotient.  The map $\Phi_0$ sends each $k \in K_0$ to such a model, which we denote by $\N / {\sim}_{\Phi_0(k)} \vDash \theory$.  A point of the pullback $\Phi^\ast \ReprE^\rmLH(\theory)$ is therefore a pair $(k,\bar{n} \in \N/ {\sim}_{\Phi_0(k)} \vDash \theory)$.

    Recall also that, given a pair of internal functors $\Phi, \Psi \colon \K \rightrightarrows \ReprG^\rmLH(\theory)$, an internal natural transformation $\alpha \colon \Phi \Rightarrow \Psi$ is a map $\alpha \colon K_0 \to \ReprG^\rmLH(\theory)_1$ that sends $k \in K_0$ to an arrow $\alpha_k \colon \Phi(k) \to \Psi(k) \in \ReprG^\rmLH(\theory)_1$, or rather a homomorphism of $\theory$-models $\alpha_k \colon \N / {\sim}_{\Phi_0(k)} \to\N / {\sim}_{\Psi_0(k)}$.  Furthermore, this internal natural transformation is sent by $\zeta_\K$ to the homomorphism of $\theory$-models
    \[
    \begin{tikzcd}
        \Phi^\ast \ReprE^\rmLH(\theory) \ar{rr}{\zeta_\K({\alpha})} \ar{dr} && \Psi^\ast \ReprE^\rmLH(\theory) \ar{dl} \\
        & K_0 &
    \end{tikzcd}
    \]
    which acts by sending $(k,\bar{n} \in \N/ {\sim}_{\Phi_0(k)} \vDash \theory) \in \Phi^\ast \ReprE^\rmLH(\theory)$ to $(k,\alpha_k(\bar{n}) \in \N/ {\sim}_{\Psi_0(k)} \vDash \theory) \in \Psi^\ast \ReprE^\rmLH(\theory)$.

    If $\zeta_\K(\alpha) = \zeta_\K(\beta)$, then for each $k \in K_0$ and $\bar{n} \in \N / {\sim}_{\Phi_0(k)}$, $\alpha_k(\bar{n}) = \beta_k(\bar{n})$. Hence, $\alpha = \beta$ and so $\zeta_\K$ is faithful (using the relation $\top \vdash \bigvee_{n \in \N} [\equiv n]$).  Conversely, if there is a homomorphism of $\theory$-models
    \[
    \begin{tikzcd}
        \Phi^\ast \ReprE^\rmLH(\theory) \ar{rr}{\sigma} \ar{dr} && \Psi^\ast \ReprE^\rmLH(\theory) \ar{dl} \\
        & K_0, &
    \end{tikzcd}
    \]
    then for each $k \in K_0$, there is a homomorphism of the $\theory$-model structures on the fibres of $k$, i.e.\ a homomorphism $\sigma_k \colon \N/{\sim}_{\Phi_0(k)} \to \N / {\sim}_{\Psi_0(k)}$.  The map $k \mapsto \sigma_k$ defines an internal natural transformation $\Tilde{\sigma} \colon K_0 \to \ReprG^\rmLH(\theory)_1$ for which $\zeta_\K(\Tilde{\sigma}) = \sigma$.  Thus, $\zeta$ is also full. 
\end{proof}

\begin{corollary}\label{coro:f+f_for_zeta}
    The functor $\overline{\zeta}_\H \colon \LocCatAna(\H, \ReprG^\rmLH(\theory))\to \Tmod^{\rmLH}(\H)$ is also full and faithful.
\end{corollary}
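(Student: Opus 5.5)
The plan is to reduce the statement to \cref{lemma:f+f_with_common_domain} by using the simplified description of 2-cells in \cref{prop:2-cells_are_simplified}. Let $(\Xi_1,\Phi_1)$ and $(\Xi_2,\Phi_2)$ be anafunctors $\H \slashedrightarrow \ReprG^\rmLH(\theory)$ with central categories $\widetilde{\H}_1$ and $\widetilde{\H}_2$. Write $\widetilde{\H}_{12} = \widetilde{\H}_1 \times_\H \widetilde{\H}_2$ with projections $\Xi'_1,\Xi'_2$, and $\Omega\colon \widetilde{\H}_{12} \to \H$ for the common composite. By \cref{prop:2-cells_are_simplified}, the 2-cells $(\Xi_1,\Phi_1) \Rightarrow (\Xi_2,\Phi_2)$ are in bijection with internal transformations $\tau\colon \Phi_1\Xi'_2 \Rightarrow \Phi_2\Xi'_1$ of functors $\widetilde{\H}_{12} \to \ReprG^\rmLH(\theory)$.

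On the model side, $\overline{\zeta}_\H(\Xi_i,\Phi_i)$ is the model $M_i$ over $\H$ obtained by descending $\Phi_i^\ast\ReprE^\rmLH(\theory)$ along $\Xi_i$, as in \cref{cor:Tmod_extends}. The functor $\Omega$ is fully faithful and surjective on objects, since such functors are stable under pullback and composition. Hence \cref{cor:Tmod_extends} tells us that $\Omega^\ast\colon \Tmod^\rmLH(\H) \to \Tmod^\rmLH(\widetilde{\H}_{12})$ is an equivalence, and in particular fully faithful. It therefore suffices to identify homomorphisms $M_1 \to M_2$ with homomorphisms $\Omega^\ast M_1 \to \Omega^\ast M_2$. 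By pseudofunctoriality we have $\Omega^\ast M_1 \cong \Xi_2'^\ast\Xi_1^\ast M_1 \cong (\Phi_1\Xi'_2)^\ast \ReprE^\rmLH(\theory) = \zeta_{\widetilde{\H}_{12}}(\Phi_1\Xi'_2)$, and likewise $\Omega^\ast M_2 \cong \zeta_{\widetilde{\H}_{12}}(\Phi_2\Xi'_1)$. Applying \cref{lemma:f+f_with_common_domain} with $\K = \widetilde{\H}_{12}$ then gives a bijection between transformations $\Phi_1\Xi'_2 \Rightarrow \Phi_2\Xi'_1$ and homomorphisms $\Omega^\ast M_1 \to \Omega^\ast M_2$. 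Composing the three bijections yields a bijection between 2-cells and homomorphisms $M_1 \to M_2$.

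The main remaining step, and I expect the only real obstacle, is to check that this composite bijection is precisely the action of $\overline{\zeta}_\H$ on 2-cells. This is what makes it functorial and identifies it with $\overline{\zeta}_\H$, rather than just giving an abstract bijection. Here I would unwind how the extension of \cite[Theorem 21]{pronk1996bicategories} acts on 2-cells. A 2-cell represented by $(\id,\tau)$ is sent to the unique homomorphism $M_1 \to M_2$ whose restriction along $\Omega$ is $\zeta_{\widetilde{\H}_{12}}(\tau)$, conjugated by the coherence isomorphisms above. This is forced because $\Omega^\ast$ is an equivalence and the extension is pseudofunctorial. Once this compatibility is in place, faithfulness and fullness of $\overline{\zeta}_\H$ follow immediately.
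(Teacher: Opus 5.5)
Your proposal is correct and follows the paper's proof essentially step for step. You reduce 2-cells to transformations $\Phi_1\Xi'_2 \Rightarrow \Phi_2\Xi'_1$ over $\widetilde{\H}_1\times_\H\widetilde{\H}_2$ via \cref{prop:2-cells_are_simplified}, apply \cref{lemma:f+f_with_common_domain} there, and transport back to $\H$ using the equivalence of model categories from \cref{cor:Tmod_extends}. The paper only asserts that the composite bijection is the action of $\overline{\zeta}_\H$ on 2-cells (``under this equivalence \ldots\ is identified with \ldots''), so your explicit check of this is a welcome addition rather than a deviation.
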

\begin{proof}
    Recall from \cref{df:anacat} and \cref{prop:2-cells_are_simplified} that, given anafunctors 
    \begin{align*}
         \H \xslashedrightarrow{(\Xi,\Phi)} \G  = (\H \xlongleftarrow{\Xi} \K \xlongrightarrow{\Phi} \G) \text{ and } \H \xslashedrightarrow{(\Sigma,\Psi)} \G  = (\H \xlongleftarrow{\Xi} \K \xlongrightarrow{\Psi} \G),
    \end{align*}
    a 2-cell $\alpha \colon (\Xi,\Phi) \Rightarrow (\Xi,\Psi)$ in $\LocCatAna$ is the same datum as an internal transformation $\alpha \colon \Phi \circ \Sigma' \Rightarrow \Psi \circ \Xi'$ in $\LocCat$, where $\Sigma'$ and $\Xi'$ denote the pullbacks of, respectively, $\Sigma$ and $\Xi$, as in the following diagram.
    \[\begin{tikzcd}
	& \K && \\
	\H && {\mathclap{\K \times_\H \K'}\phantom{\K\K'}} & \G \\
	& {\K'}
	\arrow["\Xi"', from=1-2, to=2-1]
	\arrow["\Phi", curve={height=-18pt}, from=1-2, to=2-4]
	\arrow["{{\Sigma'}}"', from=2-3, to=1-2]
	\arrow["\lrcorner"{anchor=center, pos=0.3, rotate=-135}, draw=none, from=2-3, to=2-1]
	\arrow["{{\Xi'}}", from=2-3, to=3-2]
	\arrow["\Sigma", from=3-2, to=2-1]
	\arrow["\Psi", curve={height=18pt}, from=3-2, to=2-4]
    \end{tikzcd}\]
    Note that $\Sigma'$ and $\Xi'$ are still weak equivalences of localic categories since these are stable under pullback.  Therefore, we have that
    \begin{align*}
        \LocCatAna(\H, \ReprG^\rmLH(\theory))\big[(\Xi,\Phi),(\Sigma,\Psi)\big]
         &= \LocCat(\K \times_\H \K', \ReprG^\rmLH(\theory))\big[\Phi \circ \Sigma' , \Psi \circ \Xi' \big].
    \end{align*}
    Now, by \cref{lemma:f+f_with_common_domain}, the map $\left[\alpha \colon \Psi \circ \Sigma' \Rightarrow \Psi \circ \Xi' \right] \mapsto \zeta_{\K \times_\H \K'}(\alpha)$ induces a bijection
    \[
    \LocCat(\K \times_\H \K', \ReprG^\rmLH(\theory))\big[\Phi \circ \Sigma' , \Psi \circ \Xi' \big] \cong \Tmod^{\rmLH}(\K \times_\H \K')\big[\zeta(\Phi \circ \Sigma') , \zeta(\Psi \circ \Xi') \big].
    \]
    The categories $\Tmod^\rmLH(\H)$ and $\Tmod^\rmLH(\K \times_\H \K)$ are equivalent, using either of the weak equivalences $\Xi \circ \Sigma'$ or $\Sigma \circ \Xi'$ with \cref{cor:Tmod_extends}.
    Under this equivalence, $\zeta(\Phi \circ \Sigma')$ is identified with $\overline{\zeta}(\Xi,\Phi)$, and $\zeta(\Psi\circ \Xi')$ with $\overline{\zeta}(\Sigma,\Psi)$. %
    Thus, the functor $\overline{\zeta}_\H$ acts as a bijection
    \[
    \LocCatAna(\H, \ReprG^\rmLH(\theory))\big[(\Xi,\Phi),(\Sigma,\Psi)\big] \cong \Tmod^{\rmLH}(\H)\big[\overline{\zeta}(\Xi,\Phi), \overline{\zeta}(\Sigma,\Psi)\big],
    \]
    and so $\overline{\zeta}_\H$ is fully faithful.
\end{proof}

Combining \cref{prop:ess_surj} and \cref{coro:f+f_for_zeta}, we have that $\overline{\zeta}_\H\colon \LocCatAna(\H,\ReprG^\rmLH(\theory)) \to \Tmod^\rmLH(\H)$ is an equivalence of categories for each localic category $\H$.
Moreover, the functors $\zeta_\H$ are pseudonatural in $\H$ by the universal property of pullback (or the pseudofunctoriality of $\Tmod^\rmLH$).
Hence, by the universal property of the bicategory of fractions, the functors $\overline{\zeta}_\H$ are also pseudonatural in $\H$ and we arrive at our main result.
\begin{theorem}
 There is a pseudonatural equivalence
    \[
    \overline{\zeta}\colon \LocCatAna(-,\ReprG^\rmLH(\theory)) \simeq \Tmod^{\rmLH}(-) \colon \LocCatAna \to \CAT.
    \]
 That is to say, $\ReprG^\rmLH(\theory)$ is the representing object for $\Tmod^\rmLH(-)$ and $\ReprE^\rmLH(\theory)$ is the corresponding universal element.
\end{theorem}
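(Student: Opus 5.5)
The plan is to assemble the pieces already established rather than prove anything new. First, by \cref{cor:Tmod_extends}, $\Tmod^\rmLH(-)$ is a pseudofunctor $\LocCatAna\op \to \CAT$. The assignment $\H \mapsto \LocCatAna(\H,\ReprG^\rmLH(\theory))$ is the representable pseudofunctor on the same bicategory. I will define the components $\overline{\zeta}_\H$ as in the text: an anafunctor $(\Xi,\Phi)$ is sent to $\Tmod^\rmLH(\Xi,\Phi)(\ReprE^\rmLH(\theory))$. Concretely, this means choosing the descent of $\Phi^\ast\ReprE^\rmLH(\theory)$ along $\Xi$, which is possible by \cref{prop:bundles_descend} and the corollary. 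Equivalently, $\overline{\zeta}$ is the pseudonatural transformation corresponding, under the bicategorical Yoneda lemma, to the object $\ReprE^\rmLH(\theory) \in \Tmod^\rmLH(\ReprG^\rmLH(\theory))$. Defining it this way makes pseudonaturality automatic, and it visibly restricts along the localisation $\LocCat \to \LocCatAna$ to the functors $\zeta_\H$.

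Next, I check that each component is an equivalence of categories. Essential surjectivity is \cref{prop:ess_surj}, and full faithfulness is \cref{coro:f+f_for_zeta}. To be careful, I will verify that the functor built from the Yoneda description agrees up to natural isomorphism with the one used in those results. On an anafunctor $(\Xi,\Phi)$, both produce an object $M$ with $\Xi^\ast M \cong \Phi^\ast \ReprE^\rmLH(\theory)$, and such an $M$ is unique up to unique isomorphism because $\Xi^\ast$ is an equivalence. On 2-cells, both are computed via \cref{prop:2-cells_are_simplified} on $\widetilde{\H}_1\times_\H\widetilde{\H}_2$, so they coincide.

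Finally, a pseudonatural transformation whose components are all equivalences is a pseudonatural equivalence. A pseudo-inverse is obtained by choosing pseudo-inverse equivalences componentwise and transporting the structure 2-cells, which is standard bicategory theory. I expect the main obstacle to be this bookkeeping around pseudonaturality in $\LocCatAna$ rather than in $\LocCat$. If the Yoneda packaging proves awkward, I will fall back on the universal property of the bicategory of fractions \cite{pronk1996bicategories}. The $\zeta_\H$ are pseudonatural in $\H$ with respect to internal functors, by pseudofunctoriality of pullback. Since both the source and target pseudofunctors invert surjective weak equivalences, this transformation extends uniquely up to equivalence to $\LocCatAna$, and its components are exactly the $\overline{\zeta}_\H$. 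The identification of $\ReprE^\rmLH(\theory)$ as the universal element is then the statement that $\overline{\zeta}_{\ReprG^\rmLH(\theory)}(\id) = \ReprE^\rmLH(\theory)$.
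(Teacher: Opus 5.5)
Your proposal is correct and follows essentially the paper's route: each $\overline{\zeta}_\H$ is an equivalence by \cref{prop:ess_surj} and \cref{coro:f+f_for_zeta}, and pseudonaturality of $\overline{\zeta}$ is obtained from that of $\zeta$ via the universal property of the bicategory of fractions, which is your fallback. Your primary Yoneda packaging, applied to the extended pseudofunctor of \cref{cor:Tmod_extends}, is a slightly cleaner way of obtaining the same pseudonaturality, and it matches the paper's own remark that $\overline{\zeta}_\H(\Phi)$ is $\Tmod^\rmLH(\Phi)(\ReprE^\rmLH(\theory))$.
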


\subsection{PS-models}

We have argued that, for a geometric theory $\theory$, the localic category $\ReprG^\rmLH(\theory)$ classifies LH $\theory$-models.  By making very mild modifications to the argument, we can also deduce $\ReprG^\rmPS(\theory')$ classifies PS $\theory'$-models for every dual geometric theory $\theory'$.
\begin{theorem}
    Let $\theory'$ be a dual geometric theory. There is pseudonatural equivalence
    \[
    \overline{\zeta'} \colon \LocCatAna(-,\ReprG^\rmPS(\theory')) \simeq \Tpmod^\rmPS(-).
    \]
    That is, $\ReprG^\rmPS(\theory')$ is the representing object for $\Tpmod^\rmPS(-)$. Moreover, $\ReprE^\rmPS(\theory')$ is the corresponding universal element.
\end{theorem}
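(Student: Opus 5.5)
The plan is to follow the LH argument verbatim, changing only the forcing ingredient and the meaning of `point of the fibre'. First, for each localic category $\H$, define $\zeta'_\H \colon \LocCat(\H,\ReprG^\rmPS(\theory')) \to \Tpmod^\rmPS(\H)$ by $\Phi \mapsto \Phi^\ast \ReprE^\rmPS(\theory')$. By \cref{prop:generic_bundle_is_PS_model} and \cref{prop:E(T)-is-stable-under-action} this is a PS-model over $\ReprG^\rmPS(\theory')$. By \cref{cor:Tmod_extends}(2), $\zeta'_\H$ extends along the localisation to a functor $\overline{\zeta'}_\H$ on $\LocCatAna(\H,\ReprG^\rmPS(\theory'))$. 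Pseudonaturality in $\H$ then follows exactly as before, from the pseudofunctoriality of $\Tpmod^\rmPS$ and the universal property of the bicategory of fractions. So it remains to show that each $\overline{\zeta'}_\H$ is an equivalence.

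For full faithfulness I would redo \cref{lemma:f+f_with_common_domain} with generalised points. A point of $\Phi^\ast\ReprE^\rmPS(\theory')$ over $k$ is a point $\bar s$ of the compact Hausdorff subquotient $2^\N/{\sim}_{\Phi_0(k)}$ (by \cref{prop:each-generic-sort-is-LH}), and a natural transformation is a $K_0$-indexed family of homomorphisms between such subquotients. Faithfulness uses the relation $\top \vdash \bigvee_{s\in 2^\N}[\equiv s]$, now read as a proper, compactly indexed join, so that a map out of the subquotient is determined by its composite with the proper surjection from the closed domain of $\sim$. For fullness, a model homomorphism $\sigma$ gives fibrewise maps $\sigma_k$, and these must assemble into a map $K_0 \to \ReprG^\rmPS(\theory')_1$. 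I would obtain the generator $[\alpha(p)=q]$ as the closed of $K_0\times 2^\N\times 2^\N$ given by the graph of $\sigma$ pulled back along the quotient maps. This uses that $\Psi^\ast\ReprE^\rmPS(\theory')$ is separated, so the graph is closed, and that the domain is proper, so images are closed. Preservation of relations is then exactly the fact that $\sigma$ is a homomorphism. The passage from $\zeta'$ to $\overline{\zeta'}$ is word for word \cref{coro:f+f_for_zeta}, which only used \cref{prop:2-cells_are_simplified} and \cref{cor:Tmod_extends}.

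For essential surjectivity, take a PS-model $M$ over $\H$, first with finitely many sorts, which I may combine into one. Replace $[\N\paronto_{H_0}X]$ by the fibrewise locale $[2^\N \paronto_{H_0} X]$ of closed partial surjections from $2^\N$ onto the fibres of the proper separated map $X\to H_0$. By \cref{thm:alexandroff_hausdorff}, internalised in $\Sh(H_0)$ and externalised, the map $[2^\N\paronto_{H_0}X]\to H_0$ is a triquotient, hence of effective descent. Via \cref{rem:ff_induced_domain} it therefore induces a surjective weak equivalence $(2^\N\paronto_\H X)\to\H$. The functor $(2^\N\paronto_\H X)\to \ReprG^\rmPS(\theory')$ sends a point $(h, 2^\N/{\sim}\cong p^{-1}(h))$ to the transported model structure. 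Here $[p\sim q]$ is the pullback of the diagonal of $X$, and $[(p_i)\in R]$ is the pullback of the closed $R^M$; these are closed precisely because $M$ is a PS-model. On arrows, the functor sends $f$ to the transported action $\beta(-,f)$. The comparison $E'\cong E''$ is again obtained by transporting points across the chosen isomorphism. The infinitely many sorts case should be handled by taking the fibre product over $H_0$ of the locales $[2^\N\paronto_{H_0}A^M]$ for all sorts, since \cref{rem:make_everything_singlesorted} is unavailable here.

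I expect the main obstacle to be this infinite fibre product. An infinite wide pullback of triquotients need not obviously be a triquotient or of effective descent. I would first try to form it internally in $\Sh(H_0)$ as a product of locales each admitting a triquotient to $1$. If that stalls, I would instead force all sorts simultaneously, embedding their disjoint union into a single $[0,1]^S$ as in the proof of \cref{thm:alexandroff_hausdorff}, so that only one forcing step, a composite of open surjections and a triquotient, is needed. A secondary check is that \cref{lem:expanded_presentation} and \cref{lem:canonical_prens} justify reading generalised points of $\ReprE^\rmPS(\theory')$ fibrewise as points of $2^\N/{\sim}$, which is what makes the pointwise arguments above legitimate.
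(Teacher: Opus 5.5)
Your proposal is the paper's proof. $\zeta'_\H$ is defined by pulling back $\ReprE^\rmPS(\theory')$, full faithfulness and pseudonaturality are carried over from \cref{lemma:f+f_with_common_domain} and \cref{coro:f+f_for_zeta}, and essential surjectivity repeats \cref{prop:ess_surj} with \cref{thm:alexandroff_hausdorff} in place of \cref{prop:every_set_is_subcount}; you in fact give more detail than the paper does (closedness of the graph of $\sigma$ via separatedness, closedness of the transported relations).

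The infinitely-many-sorts issue you flag is genuine, and the paper passes over it silently: its LH argument simply assumes a single sort, which \cref{rem:make_everything_singlesorted} does not allow in the dual geometric case. Of your two remedies, the second is the right one. The bare statement of \cref{thm:alexandroff_hausdorff} says nothing about infinite fibre products of triquotients, so the wide pullback route stalls. Rerunning its proof once for the whole family of sorts is what works:
\begin{enumerate}
\item perform the complete-regularity base change for all sorts at once;
\item embed each sort in its own $[0,1]^{S_i}$, rather than the non-compact disjoint union;
\item use a single forcing to make $\coprod_i S_i$ subcountable.
\end{enumerate}
This produces a point of the product after one base change along a composite of open surjections, which is exactly what the descent argument needs.
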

\begin{proof}
    The functor $\zeta'_\H$ is defined by sending an internal functor $\Phi \colon \H \to \ReprG^\rmPS(\theory')$ to the pullback $\Phi^\ast \ReprE^\rmPS(\theory')$ of the generic PS-model constructed in \cref{subsec:generic_model}. From then on, the argument closely mirrors that for the case of $\Tmod^{\rmLH}(-)$ provided in the proofs of \cref{prop:ess_surj}, \cref{lemma:f+f_with_common_domain} and \cref{coro:f+f_for_zeta}. The only real difference is that, when proving that $\zeta'_\H$ is essentially surjective, we use \cref{thm:alexandroff_hausdorff} in place of \cref{prop:every_set_is_subcount} to deduce that, up to a `forcing extension', every PS-model over $\H$ is a subquotient of the Cantor space.
\end{proof}

\subsection{Representing groupoids}

We can use the universal property of the core of a localic category to deduce that there are also representing groupoids for the pseudofunctors $\Tmod^\rmLH_{\cong}(-), \Tpmod^\rmPS_{\cong}(-) \colon \LocGrpdAna\op \to \CAT$.
\begin{corollary}\label{coro:univ_prop_grpd}
    For each geometric theory $\theory$ and dual geometric theory $\theory'$, there are pseudonatural equivalences
    \begin{align*}
    \Tmod^\rmLH_{\cong}(-) & \simeq \LocGrpdAna(-,\ReprG^\rmLH_{\cong}(\theory)) , \\
    \Tpmod^\rmPS_{\cong}(-) & \simeq \LocGrpdAna(-,\ReprG^\rmPS_{\cong}(\theory')).
    \end{align*}
\end{corollary}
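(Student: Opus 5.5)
The plan is to chain three equivalences, each pseudonatural in the groupoid $\G$. Take a localic groupoid $\G$ and a geometric theory $\theory$; the PS case is word-for-word the same. The chain is
\[
\LocGrpdAna(\G,\ReprG^\rmLH_{\cong}(\theory)) \simeq \LocCatAna(\G,\ReprG^\rmLH(\theory))_{\cong} \simeq \Tmod^\rmLH(\G)_{\cong} = \Tmod^\rmLH_{\cong}(\G).
\]
The first equivalence is the lemma on localising respecting the core (from the subsection on anafunctors), applied with $\H = \ReprG^\rmLH(\theory)$. Here I must check that $\ReprG^\rmLH_{\cong}(\theory)$ as presented in \cref{def:generic-localic-groupoid} really is the core $(\ReprG^\rmLH(\theory))_{\cong}$ in the sense of the regular subobject of $H_1\times H_1$ cut out by the inverse equations.

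I would verify this on generalised points. A pair $(\alpha,\alpha')$ of mutually inverse homomorphisms between subquotient models is determined by $\alpha$, since $[\alpha'(q)=p] = [\alpha(p)=q]$. The conditions that $\alpha'$ be a well-defined total homomorphism translate exactly into the injectivity, surjectivity and reflection-of-relations axioms added in \cref{def:generic-localic-groupoid}. This is the content of the remark after that definition. Frame-theoretically, the map $\ReprG^\rmLH_{\cong}(\theory)_1 \to \ReprG^\rmLH(\theory)_1\times\ReprG^\rmLH(\theory)_1$ sends the generators of the second copy via $i^*$. It is monic because its image contains all generators, and one checks it equalises the defining equations.

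The second equivalence comes from restricting the main theorem's equivalence $\overline{\zeta}_\G$ to invertible 2-cells. An equivalence of categories restricts to an equivalence of cores, since full faithfulness reflects invertibility. It remains to identify the invertible morphisms of $\Tmod^\rmLH(\G)$ with the morphisms of $\Tmod^\rmLH_{\cong}(\G)$, which holds by definition. Hence $\LocCatAna(\G,\ReprG^\rmLH(\theory))_{\cong}\simeq \Tmod^\rmLH_{\cong}(\G)$.

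For pseudonaturality, both the core lemma and $\overline{\zeta}$ are pseudonatural. Restricting a pseudonatural equivalence of category-valued pseudofunctors along $\LocGrpdAna \hookrightarrow \LocCatAna$ and passing to cores preserves pseudonaturality, because precomposition with an anafunctor preserves invertible 2-cells. Thus the composite is pseudonatural in $\G$.

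The main obstacle I expect is the identification of the explicitly presented groupoid with the abstract core, together with the pseudonaturality bookkeeping in the core lemma, whose proof the paper omits. For the former I would argue with generalised points and then appeal to the Yoneda lemma.
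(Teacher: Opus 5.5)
Your proposal is correct and matches the paper's proof: the chain $\LocGrpdAna(\H,\ReprG^\rmLH_{\cong}(\theory)) \simeq \LocCatAna_{\cong}(\H,\ReprG^\rmLH(\theory)) \simeq \Tmod^\rmLH_{\cong}(\H)$, obtained from the core lemma and from restricting $\overline{\zeta}_\H$ to invertible 2-cells, with the PS case treated identically. The identification of the presented groupoid with the abstract core, which you plan to verify, is taken as definitional in the paper (see \cref{def:generic-localic-groupoid} and the remark after it), so your check is a sanity check rather than a required step.
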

\begin{proof}
    Let $\theory$ be a geometric theory.  For each localic groupoid $\H$, have that
    \begin{align*}
        \LocGrpdAna(\H,\ReprG^\rmLH_{\cong}(\theory)) & \simeq \LocCatAna_{\cong}(\H,\ReprG^\rmLH(\theory)) \\
        & \simeq \Tmod^\rmLH_{\cong}(\H).
    \end{align*}
    The case for a dual geometric theory is identical.
\end{proof}

\begin{remark}[cf.\ \cite{manuell2023representing}]
    The universal property of the localic groupoid $\ReprG^\rmLH_{\cong}(\theory)$ expressed in \cref{coro:univ_prop_grpd} provides an alternative construction of the classifying topos of a geometric theory in terms of localic groupoids, rather than using syntactic categories (as is found in most textbook accounts, e.g.\ \cite[\S X]{SGL} or \cite[\S 2]{TST}).  If $\topos \simeq \Sh(\H)$ is the topos of sheaves on a localic groupoid (and by Joyal--Tierney \cite{joyal1984galois}, every topos is of this form), then a geometric morphism $\Sh(\H) \to \Sh(\ReprG^\rmLH_{\cong}(\theory))$ is the same information as an anafunctor $\H \slashedrightarrow \ReprG^\rmLH_{\cong}(\theory)$ by \cite{moerdijk1988}, which by \cref{coro:univ_prop_grpd} corresponds to an LH-model of $\theory$ over $\H$, or equivalently, a model of $\theory$ internal to the topos $\Sh(\H)$.
\end{remark}

\begin{remark}
 It is actually possible to recover even the non-invertible 2-cells from these localic groupoids by making use of the concept of a Sierpiński homotopy. See \cite{henry_townsend_classifying} for more details.
\end{remark}

\section{Conclusion}\label{sec:concl}

To summarise, in this paper we have constructed classifying localic categories for étale bundles (i.e.\ local homeomorphisms) endowed with the structure of a model for a theory of geometric logic, and for proper separated bundles with dual geometric logical structure.  The universal property of these classifying categories asserts that any such bundle is obtained by `pulling back' a generic bundle over the classifying category.  We now discuss our results within a wider context of classifying groupoids and classifying toposes.

\subsection*{Toposes and `dual toposes' within localic groupoids}

It is well-understood that toposes provide a rich domain in which to understand and study geometric logic --- every topos `embodies' a geometric theory, and two classifying toposes are equivalent if and only if the corresponding geometric theories are \emph{Morita equivalent}.  Similarly, given two geometric theories $\theory_1$ and $\theory_2$, their classifying groupoids $\ReprG^\rmLH_{\cong}(\theory_1)$ and $\ReprG^\rmLH_{\cong}(\theory_2)$ are equivalent in $\LocGrpdAna$ if and only if $\theory_1$ and $\theory_2$ are Morita equivalent.  Thus, by sending a topos $\topos$ to the classifying groupoid of any theory classified by $\topos$, we obtain a full embedding of the 2-category $\Topos_{\cong}$ of toposes, geometric morphisms and invertible natural transformations into $\LocGrpdAna$. Moreover, this inclusion has a left adjoint, given by the \'etale completion (\cref{rem:etale_completion}).

Our results suggest that there is a notion of `dual topos' which relates to dual geometric logic in the same way. These would correspond to the full sub-bicategory $\D$ of $\LocGrpdAna$ spanned by the classifying groupoids of dual geometric theories constructed in \cref{subsec:generic_model}.  The sub-bicategory $\D$ has many of the analogous properties that make $\Topos_{\cong}$ suitable for the study of geometric theories, such as, tautologically, each dual geometric theory having a classifier and every object of $\D$ classifying a dual geometric theory. We can also show that this bicategory has some favourable closure properties:
\begin{proposition}
    The sub-bicategory $\D \subseteq \LocGrpdAna$ of dual geometric classifiers has all (weighted) bilimits.
\end{proposition}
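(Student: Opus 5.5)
The plan is to show that every bilimit of a diagram in $\D$ can be computed by classifying a suitably constructed dual geometric theory. Concretely, suppose we are given a weighted diagram $F\colon \mathcal{I} \to \D$ with $F(i) = \ReprG^\rmPS_{\cong}(\theory_i)$. By \cref{coro:univ_prop_grpd}, maps into $F(i)$ from a localic groupoid $\H$ are the same as invertible PS-models of $\theory_i$ over $\H$. Likewise, anafunctors $F(i) \slashedrightarrow F(j)$ are the same as PS-models of $\theory_j$ over $\ReprG^\rmPS_{\cong}(\theory_i)$. Such a model can be pulled back along the generic model, so it should be expressible as an \emph{interpretation} of $\theory_j$ in $\theory_i$: a PS-definable construction turning a $\theory_i$-model into a $\theory_j$-model. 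The bilimit is then determined by the pseudofunctor
\[\H \mapsto \mathrm{bilim}_W\, \Tpmod_{i,\cong}^\rmPS(\H).\]
It suffices to exhibit a single dual geometric theory $\theory$ whose groupoid of PS-models over each $\H$ is naturally equivalent to this weighted bilimit of groupoids. Once we have it, $\ReprG^\rmPS_{\cong}(\theory)$ represents the bilimit by Yoneda for bicategories.

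I would first reduce to basic cases: bi-products (including the terminal object), bi-pullbacks/bi-equifiers, and cotensors with small categories. All weighted bilimits are built from these by the standard Street-style argument.

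\textbf{Products.} For $\prod_i \theory_i$ take the disjoint union of signatures and axioms. Since the sorts of a dual geometric theory may be set-indexed (\cref{rem:make_everything_singlesorted} only warns against merging them into one sort), this is again a dual geometric theory, and a PS-model of the union over $\H$ is exactly a family of PS-models. The terminal object is the empty theory, whose classifier is $\1$.

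\textbf{Cotensors.} For the cotensor with a category $\mathcal{C}$, take copies of the signature indexed by objects of $\mathcal{C}$. For each arrow of $\mathcal{C}$, add new function symbols (encoded as relations with totality/functionality axioms) expressing an \emph{isomorphism} of models; isomorphisms suffice since we are in groupoids.

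\textbf{Bi-pullbacks.} Given $\theory_1 \to \theory_0 \leftarrow \theory_2$, realised as interpretations $I_1, I_2$, take $\theory_1 \sqcup \theory_2$ together with new sorts, relations and axioms expressing an isomorphism between $I_1(M_1)$ and $I_2(M_2)$. The sorts of $I_k(M_k)$ are PS-definable from $M_k$, so this is expressible using relations on the defining formulas.

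The main obstacle is the preliminary step: showing that every anafunctor between objects of $\D$ really comes from a dual geometric interpretation. That is, I must show that a PS-model over $\ReprG^\rmPS_{\cong}(\theory_i)$ can be described syntactically in terms of the generic model $\ReprE^\rmPS(\theory_i)$. The natural attempt is a dual analogue of the syntactic site. Any proper separated bundle over $\ReprG^\rmPS(\theory_i)_0$ with an action is a subquotient (by \cref{thm:alexandroff_hausdorff}, after forcing) of products of the generic sorts with compact Hausdorff parameters. Because such bundles descend along weak equivalences (\cref{cor:Tmod_extends}), I would treat the new sorts as closed subquotients of $A^{\ReprE} \times 2^\N$, axiomatised over $\theory_i$ together with the generic presentation.

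If this proves too delicate, I would avoid it. Instead I would state the bilimit directly in terms of the represented pseudofunctors $\Tpmod^\rmPS_{\cong}(-)$, and add the model of $\theory_j$ over $\theory_i$ as extra structure in the combined theory. Its sorts would carry their own closed partial equivalence relations on $2^\N$, so the PS-bundle is encoded by generators exactly as in \cref{def:repr-grpd-LH}, and the forcing argument of the essential-surjectivity proof handles the rest.
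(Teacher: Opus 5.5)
Your product and terminal-object cases are exactly the paper's argument. In fact the paper's proof is only a sketch: it treats $\prod_i \ReprG^\rmPS_{\cong}(\theory_i)\simeq \ReprG^\rmPS_{\cong}(\coprod_i\theory_i)$ in precisely this way and then simply asserts that for any diagram one can write down a dual geometric theory classified by the bilimit. Your reduction to products and bipullbacks is reasonable in the $(2,1)$-category $\LocGrpdAna$. For cotensors you must also impose axioms making the chosen isomorphisms compose as in $\mathcal{C}$. You also correctly isolate the one thing everything else depends on.

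That thing is not supplied, however, so the bipullback step has a genuine gap, and with it every bilimit beyond products. You need every PS-model $N$ of $\theory_j$ over $\ReprG^\rmPS_{\cong}(\theory_i)$ to arise from $\ReprE^\rmPS(\theory_i)$ by a dual geometric interpretation. This is a dual analogue of the fact that a classifying topos is generated by its formula-objects, and nothing in the paper provides it.

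Even the notion of interpretation needs care. Take $\theory_i$ empty and $\theory_j=\Obj$: an anafunctor $\1\slashedrightarrow\ReprG^\rmPS_{\cong}(\Obj)$ is just a compact Hausdorff locale $K$. Such a $K$ is not a closed subquotient of anything definable over the empty signature. If $K$ is not metrisable, it is not a closed subquotient of $2^\N$ either, unless you first change base. This particular example is presumably harmless, since a fixed $K$ can likely be axiomatised directly. It does show that ``closed subquotients of products of sorts'' is not the right notion.

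The more serious problem is this. \cref{thm:alexandroff_hausdorff} only exhibits a PS-bundle over $\ReprG^\rmPS(\theory_i)_0$ as a subquotient of $2^\N$ \emph{after} pulling back along an effective descent cover. Both that cover and the generators $[p\sim q]$, $[(p_1,\dots,p_k)\in R]$ of $\ReprG^\rmPS(\theory_i)_0$ refer to an enumeration $2^\N\paronto M_i$, not to $M_i$ itself. \cref{cor:Tmod_extends} tells you that the \emph{bundle} descends, not that a \emph{syntactic description} of it does. To turn ``closed subquotient of $A^{\ReprE^\rmPS(\theory_i)}\times 2^\N$'' into formulas in the language of $\theory_i$, you would have to show the description can be chosen invariantly under re-enumeration, i.e.\ under the arrows of $\ReprG^\rmPS_{\cong}(\theory_i)$. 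That is precisely the missing definability theorem.

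Your fallback does not get around this. A closed partial equivalence relation on $2^\N$ is an ingredient of the \emph{presentation} of the classifying locale, not something the signature of a theory can mention. At the very least you would first have to axiomatise a Cantor-space sort. Moreover, adjoining a free copy of a $\theory_j$-structure to $\theory_i\sqcup\theory_j$ does not force it to be isomorphic to $N$ evaluated at $M_i$. Writing axioms that tie the two together is exactly the interpretation you were trying to avoid.

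As it stands, your proposal establishes what the paper's sketch actually establishes, namely products (plus cotensors, once the functoriality axioms are added). It makes explicit, without resolving, the step that the sketch's ``for any diagram'' leaves implicit.
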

\begin{proof}[Proof sketch]
    For any diagram of dual geometric classifiers, we can describe a dual geometric theory that is classified by the limit.  For example, a morphism from $\H$ into a product $\prod_{i \in I} \ReprG^\rmPS_{\cong}(\theory_i)$ is the data of a collection of morphisms $\{\H \to \ReprG^\rmPS_{\cong}(\theory_i) \mid i \in I\}$, i.e.\ a PS-model of $\theory_i$ over $\H$ for each $i \in I$.  Equivalently, this is a PS-model for the dual geometric theory $\coprod_{i \in I} \theory_i$ obtained by taking a disjoint copy of the sorts, relation symbols and axioms of each $\theory_i$.  Thus, the product exists and is given by $\prod_{i \in I} \ReprG^\rmPS_{\cong}(\theory_i) \cong \ReprG^\rmPS_{\cong}(\coprod_{i \in I} \theory_i)$.
\end{proof}
We have only taken the first steps to understanding the potential of the bicategory $\D$ in the study of dual geometric logic and many further questions remain.  For example, does there exist a left adjoint to the inclusion $\D \hookrightarrow \LocGrpdAna$, giving an analogue of the étale completion of a localic groupoid?

There is also much to be said about the even `purer' kind of dual topos where the sorts, relations and axioms are all indexed by compact Hausdorff locales.
For example, we anticipate a classifier for principal $G$-bundles for a compact Hausdorff group $G$ to be among these. One might also ask if there is a description of such dual toposes in terms of universes of compact Hausdorff locales (cf.\ \cite{marra2020characterisation}) analogous to how toposes axiomatise universes of sets.

\subsection*{Other classifiers}

Our approach is general enough to consider other classes of bundles too. For example, one might ask which localic groupoid classifies bundles which are both proper separated \emph{and} local homeomorphisms. Since these morphisms correspond to (Bishop-)finite sets in the sheaf topos of the codomain, we might expect them to be classified by the (topologically discrete) category of finite sets. We believe this is indeed the case. Indeed, this is related to the correspondence between (finite) covering spaces and actions of the fundamental groupoid (on finite sets) from algebraic topology (see \cite[\S 10.6]{brown2006topology}). %
As another example, we could extend from discrete or compact Hausdorff locales to all locally compact locales. In this case, we believe the points of the category of objects correspond to certain subquotients of $\Srpnsk^\N$ specified by certain dcpo endomorphisms.
Christopher Townsend has been working on constructing a classifying category for locally compact bundles independently by different means.
Finally, other examples inspired by algebraic topology could be considered, such as the classifying localic category for vector bundles.

\bibliographystyle{abbrv}
\bibliography{references}

\end{document}